\title{Betti Numbers of Prodsimplicial Complexes for Directed Graphs with Applications to Word Reductions}
\author{Lina Fajardo G\'omez, Margherita Maria Ferrari, Nata\v{s}a Jonoska, Masahico Saito }
\date{}
\definecolor{pdflinkcolor}{rgb}{.1,.1,.6}	% darkblue
\definecolor{pdfcitecolor}{rgb}{.6,.1,.1}	% darkred
\definecolor{pdfanchorcolor}{rgb}{0,1,0}	% green
\definecolor{pdfurlcolor}{rgb}{.1,.6,.1}	% darkgreen
\definecolor{pdfpagecolor}{rgb}{0,0,1}	 % blue
\definecolor{pdffilecolor}{rgb}{1,0,0}	 % red
\crefname{Thm}{theorem}{theorems}
\crefname{Prop}{proposition}{propositions}
\crefname{Lem}{lemma}{lemmas}
\crefname{Coro}{corollary}{corollaries}
\crefname{Ex}{example}{examples}
\crefname{Rmk}{remark}{remarks}
\newtheorem{Thm}{Theorem}[section]
\newtheorem{Coro}[Thm]{Corollary}
\newtheorem{Prop}[Thm]{Proposition}
\newtheorem{Lem}[Thm]{Lemma}
\theoremstyle{definition}
\newtheorem{Def}[Thm]{Definition}
\newtheorem{Ex}[Thm]{Example}
\newtheorem{Rmk}[Thm]{Remark}
\newcommand{\N}{\mathbb{N}}
\newcommand{\Z}{\mathbb{Z}}
\newcommand{\R}{\mathbb{R}}
\newcommand{\DOW}{\Sigma_{DOW}}
\newcommand{\SOW}{\Sigma_{SOW}}
\newcommand{\MDOW}{M^{DOW}}
\newcommand{\MSOW}{M^{SOW}}
\newcommand{\subw}{\sqsubseteq}
\DeclarePairedDelimiter\abs{\lvert}{\rvert}
\DeclareMathOperator*{\Square}{\mathlarger{\square}}
\begin{document}
\maketitle
\abstract{We propose custom made cell complexes, in particular prodsimplicial complexes, in order to analyze data consisting of directed graphs. These are constructed by attaching  cells that are products of simplices and are suited to study data of acyclic directed graphs, called here consistently directed graphs. We investigate possible values of the first and second Betti numbers and the types of cycles that generate nontrivial homology. We apply these tools to directed graphs associated with reductions of double occurrence words, words that are associated with DNA recombination processes in certain species of ciliates. We study the effects of word operations  on the homology for these graphs.}
\section{Introduction}
Topological Data Analysis (TDA) has been extensively used in recent years in biology and other sciences. It tries to capture the underlying structure of a given data set through properties such as connectedness, circular loops and higher dimensional holes. Topological invariants can be used to detect such  ``topological signatures” of the space. Typically, a given data set to analyze is a set of points in a Euclidean space, which is a priori discrete. To capture topological shapes of such data sets, simplicial complexes are formed based on the proximity of data points.

In some biological phenomena, such as DNA recombination processes, data sets are represented by graphs, which consist of vertices and edges. For example, in \cite{hajij}, gene interaction patterns are represented by graphs where vertices represent genes and edges represent how two genes interact, such as intersecting each other. In this paper, we present a novel graph-based model for genome rearrangement in some ciliate species. Gene assembly pathways can be represented by subword pattern deletions in double occurrence words (DOWs), words where each symbol appears exactly twice \cite{dowdist}. The iterated subword deletions modeling the recombination patterns can be represented by a graph, called a {\em word graph}, whose vertices are DOWs connected by a directed edge if one word can be obtained from the other through a pattern deletion. Thus, methods for performing TDA on such directed graph data are of interest.

Another shortcoming of common TDA is that its construction of complexes is often based solely on simplicial ones. In our model of word graphs for gene assembly, it seems natural to fill in squares as well as triangles to study topological aspects of the biological process. Given such a specific biological situation, we are inspired to use more general, custom built cell complexes to study topological properties of data sets. In this paper, for the purpose of specifically applying it to our word graph model, we propose the use of a prodsimplicial complex for topological studies of digraphs.

Thus, our focus of the paper is twofold: (1) defining specific cell complexes, called prodsimplicial complexes, constructed from directed graphs by attaching products of simplices for topological analysis of graph outputs, and (2) applying prodsimplicial complex homology to study the complexity of gene assembly processes through word graphs that model DNA rearrangement in certain species of ciliates. 

The paper is organized as follows. In \Cref{chap2} we define prodsimplicial cell complexes for directed graphs. In \Cref{digraphbettis} we describe some nontrivial generators of homology groups and construct digraphs with arbitrarily large Betti numbers. \Cref{chap3} introduces DOWs, their reduction pathways and word graphs, while \Cref{dowbettis} studies the effect of word operations on word graphs and their associated topological invariants.
\section{Prodsimplicial Homology for Directed Graphs}
\label{chap2}
Topological data analysis is used to determine the underlying shape of the space containing a data set by studying the space's invariants. For example, the $n$-dimensional Betti number, denoted by $\beta_n$, is a topological invariant corresponding to the number of ``holes'' of dimension $n$. Instead of using simplices as cells, we define a prodismplicial complex for our goal of studying DNA recombination. In this section, we recall basic concepts from graph theory and introduce the notation that are used throughout the rest of this discussion.

\subsection{Directed Graphs with Source and Target}

\label{graphs}
\label{def:sctgt}
A directed graph (digraph for short) $G$ is a 4-tuple $(V,E,\iota,\tau)$ where $V$ is a finite set, $E\subseteq V \times V \setminus \{(x,x) \ | \ x \in V\}$ and $\iota,\tau$ are maps from $E$ to $V$. Elements of $V$ and $E$ are called vertices and edges, respectively, and we use $V(G)$ and $E(G)$ to denote the sets of vertices and edges of a given digraph $G$. The map $\iota$ indicates the {\em source} (also called {\em initial}) vertex of each edge, and similarly $\tau$ indicates the {\em target} (also called {\em terminal}) vertex. Directly from the definition, it follows that the graphs considered have no loops or parallel edges. An edge $e$ with source $u$ and target $v$ is denoted \footnote{To distinguish between ordered pairs in the Cartesian product of two sets and directed edges, we use $[u,v]$ to denote the ordered pair $(u,v)$ when it corresponds to an edge, so that $[(u_1,v_1),(u_2,v_2)]$ denotes the directed edge $(u_1,v_1)\rightarrow (u_2,v_2)$ between vertices $(u_1,v_1)$ and $(u_2,v_2)$.} with $e = [u,v]$. A {\em source of $G$} is a vertex $s$ such that $\tau(e)\neq s$ for every edge $e\in E$. Similarly, a {\em target of $G$} is a vertex $t$ such that $\iota(e)\neq t$ for every edge $e\in E$.

For ease of notation, we refer to a digraph as $G=(V,E)$, omitting the maps $\iota$ and $\tau$ whenever clear from the context. We refer the reader to \cite{diestel} for other elementary definitions in graph theory not explicitly recalled here.

\begin{Def}[weakly directed, consistently directed]
A connected digraph $G$ is said to be {\em weakly directed} if it has unique source $s$ and a unique target $t$. Moreover, if $G$ is weakly directed and it has no (directed) cycles, then $G$ is called {\em consistently directed}. In a weakly directed digraph $G$ we use $s(G)$ and $t(G)$ to denote the unique source and target vertices, respectively.
\end{Def}

\begin{Ex}

\Cref{fig:weaklyor} illustrates a digraph that is weakly directed, but not consistently directed since it has a single source (shown in green), a single target (in red) and a directed cycle.

\begin{figure}[H]
\begin{center}
\psscalebox{1.0 1.0} % Change this value to rescale the drawing.
{
\begin{pspicture}(0,-1.4985577)(4.1971154,1.4985577)
\psdots[linecolor=black, dotsize=0.2](1.2985576,1.4)
\psdots[linecolor=black, dotsize=0.2](2.8985577,1.4)
\psdots[linecolor=black, dotsize=0.2](1.2985576,-0.2)
\psdots[linecolor=black, dotsize=0.2](2.8985577,-0.2)

\psline[linecolor=black,linewidth=0.04, arrowsize=0.05291667cm 2.0,arrowlength=1.4,arrowinset=0.0]{->}(0.098557666,-1.4)(1.2985576,-0.2)
\psline[linecolor=black,linewidth=0.04, arrowsize=0.05291667cm 2.0,arrowlength=1.4,arrowinset=0.0]{->}(1.2985576,-0.2)(1.2985576,1.4)
\psline[linecolor=black,linewidth=0.04, arrowsize=0.05291667cm 2.0,arrowlength=1.4,arrowinset=0.0]{->}(1.2985576,1.4)(2.8985577,1.4)
\psline[linecolor=black,linewidth=0.04, arrowsize=0.05291667cm 2.0,arrowlength=1.4,arrowinset=0.0]{->}(2.8985577,1.4)(2.8985577,-0.2)
\psline[linecolor=black,linewidth=0.04, arrowsize=0.05291667cm 2.0,arrowlength=1.4,arrowinset=0.0]{->}(2.8985577,-0.2)(1.2985576,-0.2)
\psline[linecolor=black,linewidth=0.04, arrowsize=0.05291667cm 2.0,arrowlength=1.4,arrowinset=0.0]{->}(2.8985577,-0.2)(4.0985575,-1.4)
\psdots[linecolor=OliveGreen, dotsize=0.2](0.098557666,-1.4)
\psdots[linecolor=red, dotsize=0.2](4.0985575,-1.4)
\end{pspicture}
}
\caption{A weakly directed digraph that is not consistently directed. The source is colored with green, while the target is in red.}
\label{fig:weaklyor}
\end{center}
\end{figure}
\end{Ex}

We use the definition of Cartesian product as in \cite{vizing}. 

\begin{Def}[Cartesian product, prime graph]
Let $G_1 = (V_1,E_1)$ and $G_2 = (V_2,E_2)$ be digraphs. The {\em Cartesian product} of $G_1$ and $G_2$ is the digraph $G =G_1 \square G_2 =  (V,E)$ where $V = V_1 \times V_2$ and $[(u_1,v_1), (u_2, v_2)] \in E$ if either $u_1 = u_2$ and $[v_1, v_2] \in E_2$ or $v_1 = v_2$ and $[u_1, u_2] \in E_1$. 

A digraph $G$ is said to be {\em prime with respect to the Cartesian product} if $G = G_1 \square G_2$ implies that either $G_1$ or $G_2$ is the trivial graph on one vertex, denoted $\Delta^0$. It is shown in \cite{feigenbaum} that the factorization of a connected directed graph into prime factors is unique with respect to the Cartesian product. 
\end{Def}

\begin{sloppypar}
Note that $G_1 \square G_2 \cong G_2 \square G_1$, as $[(u_1,v_1), (u_2, v_2)] \in E(G_1\square G_2)$ if and only if $[(v_1,u_1), (v_2, u_2)] \in E(G_2 \square G_1)$ so that the Cartesian product is commutative modulo isomorphism. Moreover, as proved in \cite{imrich}, it is also associative. As a consequence, when dealing with the Cartesian product of more than two graphs we write $G = ((\ldots((G_1 \square G_2) \square G_3) \ldots ) \square G_k)$ without including nested parentheses. Similarly, we write $V(G)=\{((v_1, \ldots, v_{k-1}), v_k) \ | \ v_i\in V(G_i)\text{ for } 1\leq i\leq k\}$ without nested parentheses.
\end{sloppypar}

The Cartesian product $G_1 \square G_2$ creates one copy (referred to as a $G_1$-layer in \cite{imrich}) of $G_1$ for every vertex in $G_2$, and vice versa.
It can be shown that the Cartesian product $G_1 \square G_2$ is consistently directed if and only if both factors are consistently directed \cite{linathesis}. By associativity, this result extends to Cartesian products of several factors. 
\subsection{Prodsimplicial Complexes for Directed Graphs} 
\label{prodcells}
Vertices and edges in a graph can be considered as 0-dimensional and 1-dimensional cells respectively, therefore a graph can be endowed with a richer structure by attaching higher dimensional cells at instances of particular subgraphs. 

In this section we present the main definitions of prodsimplicial complexes introduced in \cite{kozlov} for directed graphs. Other complexes, such as the $p$-path complexes, called $p$-path complexes, have also been considered in \cite{ppaths, preprint}. Our motivation comes from a biological process where two paths of length 2 connecting two vertices correspond to two independent pathways of DNA recombination. For such path pairs, we attach solid square faces so that the two pathways are regarded as equivalent.

\begin{Def}[simplicial digraph]
\label{def:simplicialdigraph}
The {\em $n$-dimensional simplicial digraph}, denoted by $\Delta^n$, is the digraph with vertices $V(\Delta^n) = \{v_0, v_1, \ldots, v_n\}$ and edges $E(\Delta^n) = \{[v_i,v_j] \ | \ 0 \leq i < j \leq n\}$. 
\end{Def}

Note that the source and target of the edges of $\Delta^n$ are induced by the total order on the set of vertices $V(\Delta^n)=\{v_0, v_1, \ldots, v_n\}$. It follows that $\Delta^n$ is consistently directed, with source $v_0$ and target $v_n$. Moreover, the total order on the vertices guarantees that all subgraphs of $\Delta^n$ induced by a subset of $V(\Delta^n)$ are also simplicial digraphs. Also, any two simplicial digraphs on the same number of vertices are isomorphic as directed graphs. In general, $\Delta^n$ can be obtained from $\Delta^{n-1}$ by adding the new vertex $v_n$ along with the edges $[v_i,v_n]$ for $i=0,\ldots,n-1$. 

In other works, simplicial digraphs are referred to as directed cliques \cite{directedcliques,cliquecomplex} or transitive tournaments \cite{digraphclasses}. Simplicial digraphs are prime with respect to the Cartesian product. Thanks to the unique prime decomposition in \cite{feigenbaum}, prodsimplicial cells as described below are well defined.

\begin{Def}[prodsimplicial cell]
An {\em $N$-dimensional prodsimplicial cell} $P$ is the $N$-cell that is a product of simplices $\prod_{i=1}^k \Delta_i^{n_i} = \Delta_1^{n_1} \times \cdots \times \Delta_k^{n_k}$ where $n_i>0$ for all $1\leq i \leq k$ and $N= \sum_{i=1}^kn_i$. Its 1-skeleton is the Cartesian product of simplicial digraphs. That is, a graph of the form
\begin{align*}
\displaystyle\Square\limits_{i=1}^k \Delta_i^{n_i} = \Delta_1^{n_1} \square \cdots \square \Delta_k^{n_k}.
\end{align*}
For brevity, we call $P$ a prodsimplicial $N$-cell. When $k=1$ we call $P$ an $N$-simplex and denote $P$ by $\Delta^N$.
\end{Def}

Since the Cartesian product of graphs is commutative up to isomorphism, we assume that in a prodsimplicial cell $P = \Square_{i=1}^k \Delta^{n_i}_i$ the order of the factors is such that $n_1 \geq n_2 \geq \cdots \geq n_k$. From this point on, a simplex will refer exclusively to a prodsimplicial cell with a single simplicial digraph factor unless otherwise specified. Given the correspondence between simplicial digraph factors and simplices, we abuse the notation and write $P = \Square_{i=1}^k \Delta^{n_i}_i$ instead of $\prod_{i=1}^{k}\Delta^{n_i}_i$. Also note that the indices are necessary because the dimensions of the factors in the definition of $P$ may not be distinct. However, we omit the subscripts for brevity where it causes no confusion. 

We can naturally define the geometric realization of a simplicial digraph as for standard simplices. From this geometric realization, prodsimplicial $N$-cells can be viewed as subsets of $\R^{N}$ that inherit the product topology. 

\begin{Ex}
Prodsimplicial cells include not only simplices, but also cubes (as iterated products of $\Delta^1$), and triangular prisms (as products of any cell with $\Delta^1$). \Cref{fig:prismic3ex} depicts the 1-skeletons of all prodsimplicial $3$-cells.
\begin{figure}[h]
\begin{center}
\psscalebox{1.0 1.0} % Change this value to rescale the drawing.
{
\begin{pspicture}(0,-1.7985578)(7.7971153,1.2985578)

\psdots[linecolor=black, dotsize=0.2](3.098557666,0.8)
\psdots[linecolor=black, dotsize=0.2](3.89855766,0.40000004)
\psdots[linecolor=black, dotsize=0.2](4.6985576,1.2)
\psdots[linecolor=black, dotsize=0.2](3.098557666,-0.79999995)
\psdots[linecolor=black, dotsize=0.2](3.89855766,-1.1999999)
\psdots[linecolor=black, dotsize=0.2](4.6985576,-0.39999998)
 
\psdots[linecolor=black, dotsize=0.2](5.8985577,0.8)
\psdots[linecolor=black, dotsize=0.2](6.4985576,0.40000004)
\psdots[linecolor=black, dotsize=0.2](6.8985577,1.0)
\psdots[linecolor=black, dotsize=0.2](7.4985576,0.6)

\psline[linecolor=black, linewidth=0.04, arrowsize=0.05291667cm 2.0,arrowlength=1.4,arrowinset=0.0]{->}(3.098557666,0.8)(3.89855766,0.40000004)
\psline[linecolor=black, linewidth=0.04, arrowsize=0.05291667cm 2.0,arrowlength=1.4,arrowinset=0.0]{->}(3.098557666,0.8)(4.6985576,1.2)
\psline[linecolor=black, linewidth=0.04, arrowsize=0.05291667cm 2.0,arrowlength=1.4,arrowinset=0.0]{->}(3.89855766,0.40000004)(4.6985576,1.2)
\psline[linecolor=black, linewidth=0.04, arrowsize=0.05291667cm 2.0,arrowlength=1.4,arrowinset=0.0]{->}(3.098557666,-0.79999995)(3.89855766,-1.1999999)
\psline[linecolor=black, linewidth=0.04, arrowsize=0.05291667cm 2.0,arrowlength=1.4,arrowinset=0.0]{->}(3.89855766,-1.1999999)(4.6985576,-0.39999998)
\psline[linecolor=black, linewidth=0.04, arrowsize=0.05291667cm 2.0,arrowlength=1.4,arrowinset=0.0]{->}(3.098557666,-0.79999995)(4.6985576,-0.39999998)
\psline[linecolor=black, linewidth=0.04, arrowsize=0.05291667cm 2.0,arrowlength=1.4,arrowinset=0.0]{->}(3.098557666,0.8)(3.098557666,-0.79999995)
\psline[linecolor=black, linewidth=0.04, arrowsize=0.05291667cm 2.0,arrowlength=1.4,arrowinset=0.0]{->}(4.6985576,1.2)(4.6985576,-0.39999998)
\psline[linecolor=black, linewidth=0.04, arrowsize=0.05291667cm 2.0,arrowlength=1.4,arrowinset=0.0]{->}(3.89855766,0.40000004)(3.89855766,-1.1999999)

\psline[linecolor=black, linewidth=0.04, arrowsize=0.05291667cm 2.0,arrowlength=1.4,arrowinset=0.0]{->}(5.8985577,0.8)(6.8985577,1.0)
\psline[linecolor=black, linewidth=0.04, arrowsize=0.05291667cm 2.0,arrowlength=1.4,arrowinset=0.0]{->}(6.4985576,0.40000004)(7.4985576,0.6)
\psline[linecolor=black, linewidth=0.04, arrowsize=0.05291667cm 2.0,arrowlength=1.4,arrowinset=0.0]{->}(5.8985577,0.8)(6.4985576,0.40000004)
\psline[linecolor=black, linewidth=0.04, arrowsize=0.05291667cm 2.0,arrowlength=1.4,arrowinset=0.0]{->}(6.8985577,1.0)(7.4985576,0.6)

\psdots[linecolor=black, dotsize=0.2](5.8985577,-0.79999995)
\psdots[linecolor=black, dotsize=0.2](6.4985576,-1.1999999)
\psdots[linecolor=black, dotsize=0.2](6.8985577,-0.59999996)
\psdots[linecolor=black, dotsize=0.2](7.4985576,-0.99999994)
\psline[linecolor=black, linewidth=0.04, arrowsize=0.05291667cm 2.0,arrowlength=1.4,arrowinset=0.0]{->}(5.8985577,-0.79999995)(6.8985577,-0.59999996)
\psline[linecolor=black, linewidth=0.04, arrowsize=0.05291667cm 2.0,arrowlength=1.4,arrowinset=0.0]{->}(6.4985576,-1.1999999)(7.4985576,-0.99999994)
\psline[linecolor=black, linewidth=0.04, arrowsize=0.05291667cm 2.0,arrowlength=1.4,arrowinset=0.0]{->}(5.8985577,-0.79999995)(6.4985576,-1.1999999)
\psline[linecolor=black, linewidth=0.04, arrowsize=0.05291667cm 2.0,arrowlength=1.4,arrowinset=0.0]{->}(6.8985577,-0.59999996)(7.4985576,-0.99999994)
\psline[linecolor=black, linewidth=0.04, arrowsize=0.05291667cm 2.0,arrowlength=1.4,arrowinset=0.0]{->}(5.8985577,0.8)(5.8985577,-0.79999995)
\psline[linecolor=black, linewidth=0.04, arrowsize=0.05291667cm 2.0,arrowlength=1.4,arrowinset=0.0]{->}(6.4985576,0.40000004)(6.4985576,-1.1999999)
\psline[linecolor=black, linewidth=0.04, arrowsize=0.05291667cm 2.0,arrowlength=1.4,arrowinset=0.0]{->}(6.8985577,1.0)(6.8985577,-0.59999996)
\psline[linecolor=black, linewidth=0.04, arrowsize=0.05291667cm 2.0,arrowlength=1.4,arrowinset=0.0]{->}(7.4985576,0.6)(7.4985576,-0.99999994)

\psdots[linecolor=black, dotsize=0.2](-0.301442,-0.79999995)
\psdots[linecolor=black, dotsize=0.2](0.698558,-1.1999999)
\psdots[linecolor=black, dotsize=0.2](1.698558,-0.59999996)
\psdots[linecolor=black, dotsize=0.2](0.4985576,1.0)
\psline[linecolor=black, linewidth=0.04, arrowsize=0.05291667cm 2.0,arrowlength=1.4,arrowinset=0.0]{->}(-0.301442,-0.79999995)(0.698558,-1.1999999)
\psline[linecolor=black, linewidth=0.04, arrowsize=0.05291667cm 2.0,arrowlength=1.4,arrowinset=0.0]{->}(-0.301442,-0.79999995)(1.698558,-0.59999996)
\psline[linecolor=black, linewidth=0.04, arrowsize=0.05291667cm 2.0,arrowlength=1.4,arrowinset=0.0]{->}(0.698558,-1.1999999)(1.698558,-0.59999996)
\psline[linecolor=black, linewidth=0.04, arrowsize=0.05291667cm 2.0,arrowlength=1.4,arrowinset=0.0]{->}(1.698558,-0.59999996)(0.4985576,1.0)
\psline[linecolor=black, linewidth=0.04, arrowsize=0.05291667cm 2.0,arrowlength=1.4,arrowinset=0.0]{->}(0.698558,-1.1999999)(0.4985576,1.0)
\psline[linecolor=black, linewidth=0.04, arrowsize=0.05291667cm 2.0,arrowlength=1.4,arrowinset=0.0]{->}(-0.301442,-0.79999995)(0.4985576,1.0)
\rput[bl](3,-1.8097281){$\Delta^2 \square \Delta^1$}
\rput[bl](5.5,-1.8097281){$\Delta^1 \square \Delta^1 \square \Delta^1$}

\rput[bl](0.4985576,-1.8097281){$\Delta^3$}
\end{pspicture}
}
\caption{Directed graphs corresponding to the 1-skeletons of prodsimplicial $3$-cells.}
\label{fig:prismic3ex}
\end{center}
\end{figure}

\end{Ex}

The notion of prodsimplicial cell was first introduced for direct products of undirected graphs by Kozlov in~\cite{kozlov}. We choose Cartesian products to obtain a complex that combines some features of both simplicial and cubical complexes while the cells remain consistently directed. Complexes comprised of prodsimplicial cells are also used in knot theory \cite{CIST,CLY}.

\begin{Def}[prodsimplicial cell orientation]
\label{def:orientation}
Let $\Delta^n$ be a simplex with vertex set $V(\Delta^n) = \{v_0, v_1, \ldots, v_n\}$ and $s(\Delta^n) = v_0$. Let $N(s(\Delta^n))$ be the set of neighbors of the source, $V(\Delta^n)\setminus s(\Delta^n)$. An {\em orientation} of $\Delta^{n}$ is an equivalence class (by even permutations) of orders of $N(s(\Delta^{n}))$. An {\em oriented simplex}, denoted $[\Delta^n]$, is a simplex along with an orientation.

\begin{sloppypar}
Let $P = \Square_{i=1}^k\Delta^{n_i}$ be a prodsimplicial cell and let $s(P)$ be the source of $P$. Let $v_{i,0} = s(\Delta^{n_i})$ and $N_i = \{(v_{1,0}, v_{2,0}, \ldots, v_{i,j}, \ldots, v_{k,0}) \ | \  v_{i,j} \in N(s(\Delta^{n_i})), 1 \leq j \leq n_i\}$. Given a total order on each $N_i$, a total order on $N_1, \ldots, N_k$ determines a total order of  $N(s(P))$. An {\em orientation} of $P$ is an equivalence class (by even permutations) of such orders of $N(s(P))$. A prodsimplicial cell $P$ along with an orientation is an {\em oriented prodsimplicial cell} and is denoted by $[P]$.
\end{sloppypar}
\end{Def}

\begin{Ex}[oriented prodsimplicial cells]
Let $\Delta^2$ be the 2-simplex with $V(\Delta^2) = \{a,b, c\}$ and $\Delta^1$ be the 1-simplex with $V(\Delta^1) = \{x,y\}$. Let $P = \Delta^2 \square \Delta^1$. Then $s(P) = (a,x)$ and  $N_1 = \{(b,x),(c,x)\}$ and $N_2 = \{(a,y)\}$. The order $N_1,N_2$ of the sets of neighbors induces the order $((b,x),(c,x),(a,y))$ on $N(s(P))$, while $N_2,N_1$ induces the order $((a,y),(b,x),(c,x))$. If $N_1$ is ordered such that $(c,x)$ is before $(b,x)$, then the order $N_2,N_1$ induces the order $((a,y),(c,x),(b,x))$. The first two orders give the prodsimplicial cell the same orientation, while the third order of the vertices defines an oppositely oriented cell. 
\end{Ex}

\begin{Def}[faces, facets, boundary set]
We say $P'$ is a {\em face} of an $N$-dimensional prodsimplicial cell $P = \Square_{i=1}^k \Delta^{n_i}$ if there exist $n_i' \leq n_i \ \forall i = 1, \ldots k$ such that $P' = \Square_{i=1}^k \Delta^{n_i'}$. The {\em facets} of $P$ are its $(N-1)$-dimesional faces. The {\em boundary set} of $P$, denoted by $\partial P$, is the collection of all its facets. 
\end{Def}

\begin{Def}[prodsimplicial cell complex]
\label{def:prodcomplex}
Given a digraph $G=(V,E)$, we can inductively construct a {\em prodsimplicial cell complex} $\Gamma$ associated with $G$, denoted by $\Gamma(G)$ using the following \emph{gluing process}:
\label{gluing}

\begin{itemize}
\item Let $\Gamma^{(0)}$ be the collection of all the vertices of $G$.
\item Let $\Gamma^{(1)} = \Gamma^{(0)} \cup E$. That is, add to $\Gamma^{(0)}$ all the edges of $G$ by attaching to the current complex all the prodsimplicial $1$-cells whose facets are vertices of $G$.
\item Let $\Gamma^{(N-1)}$ denote the complex created in the first $N-1$ steps. Let $P$ be a prodsimplicial $N$-cell. We add $P$ to $\Gamma^{(N)}$ if there exists a map $\phi: \partial P \rightarrow \Gamma^{(N-1)}$ such that $\partial P$ is homeomorphic to $\phi(\partial P)$, $\phi$ preserves the orientation of each edge, and the restriction of $\phi$ to each facet of $P$ is also a homeomorphism. 
\end{itemize}
\end{Def}

\begin{Ex}[prodsimplicial cell complex]
\Cref{fig:cx1} shows the construction of a prodsimplicial cell complex $\Gamma^{(3)}$ with cells of dimension at most 3 including seven squares and a cube (with its six faces counted among the squares). The digraph $G$ corresponds to $\Gamma^{(1)}$, obtained by collecting vertices and edges of $G$. 

\begin{figure}[ht!]
\setlength{\tabcolsep}{0.5cm}
\def\arraystretch{3}
\centering\bgroup
\begin{tabular}{rp{0.2\textwidth}rp{0.2\textwidth}}
(a)&
\adjustbox{valign=t}{\psscalebox{0.6 0.6} % Change this value to rescale the drawing.
{
\begin{pspicture}(0.5,-5.8)(7,1.5971155)
\psdots[linecolor=blue, dotsize=0.2](3.6985579,1.4985577)
\psdots[linecolor=blue, dotsize=0.2](2.4985578,-0.9014423)
\psdots[linecolor=blue, dotsize=0.2](4.8985577,-0.9014423)
\psdots[linecolor=blue, dotsize=0.2](7.2985578,-0.9014423)
\psdots[linecolor=blue, dotsize=0.2](0.09855774,-0.9014423)
\psdots[linecolor=blue, dotsize=0.2](2.4985578,-3.3014421)
\psdots[linecolor=blue, dotsize=0.2](4.8985577,-3.3014421)
\psdots[linecolor=blue, dotsize=0.2](7.2985578,-3.3014421)
\psdots[linecolor=blue, dotsize=0.2](0.09855774,-3.3014421)
\psdots[linecolor=blue, dotsize=0.2](3.6985579,-5.7014422)
\end{pspicture}
}}&
(b)&
\adjustbox{valign=t}{\psscalebox{0.6 0.6} % Change this value to rescale the drawing.
{
\begin{pspicture}(0.5,-5.8)(7,1.5971155)
\psdots[linecolor=black, dotsize=0.2](3.6985579,1.4985577)
\psdots[linecolor=black, dotsize=0.2](2.4985578,-0.9014423)
\psdots[linecolor=black, dotsize=0.2](4.8985577,-0.9014423)
\psdots[linecolor=black, dotsize=0.2](7.2985578,-0.9014423)
\psdots[linecolor=black, dotsize=0.2](0.09855774,-0.9014423)
\psdots[linecolor=black, dotsize=0.2](2.4985578,-3.3014421)
\psdots[linecolor=black, dotsize=0.2](4.8985577,-3.3014421)
\psdots[linecolor=black, dotsize=0.2](7.2985578,-3.3014421)
\psdots[linecolor=black, dotsize=0.2](0.09855774,-3.3014421)
\psdots[linecolor=black, dotsize=0.2](3.6985579,-5.7014422)
\psline[linecolor=blue, linewidth=0.04, arrowsize=0.05291667cm 2.0,arrowlength=1.4,arrowinset=0.0]{->}(3.6985579,1.4985577)(3.6985579,1.4985577)(7.2985578,-0.9014423)
\psline[linecolor=blue, linewidth=0.04, arrowsize=0.05291667cm 2.0,arrowlength=1.4,arrowinset=0.0]{->}(3.6985579,1.4985577)(3.6985579,1.4985577)(4.8985577,-0.9014423)
\psline[linecolor=blue, linewidth=0.04, arrowsize=0.05291667cm 2.0,arrowlength=1.4,arrowinset=0.0]{->}(3.6985579,1.4985577)(2.4985578,-0.9014423)
\psline[linecolor=blue, linewidth=0.04, arrowsize=0.05291667cm 2.0,arrowlength=1.4,arrowinset=0.0]{->}(3.6985579,1.4985577)(0.09855774,-0.9014423)
\psline[linecolor=blue, linewidth=0.04, arrowsize=0.05291667cm 2.0,arrowlength=1.4,arrowinset=0.0]{->}(0.09855774,-0.9014423)(0.09855774,-3.3014421)
\psline[linecolor=blue, linewidth=0.04, arrowsize=0.05291667cm 2.0,arrowlength=1.4,arrowinset=0.0]{->}(2.4985578,-0.9014423)(2.4985578,-3.3014421)
\psline[linecolor=blue, linewidth=0.04, arrowsize=0.05291667cm 2.0,arrowlength=1.4,arrowinset=0.0]{->}(7.2985578,-0.9014423)(7.2985578,-3.3014421)
\psline[linecolor=blue, linewidth=0.04, arrowsize=0.05291667cm 2.0,arrowlength=1.4,arrowinset=0.0]{->}(7.2985578,-0.9014423)(0.09855774,-3.3014421)
\psline[linecolor=blue, linewidth=0.04, arrowsize=0.05291667cm 2.0,arrowlength=1.4,arrowinset=0.0]{->}(2.4985578,-0.9014423)(4.8985577,-3.3014421)
\psline[linecolor=blue, linewidth=0.04, arrowsize=0.05291667cm 2.0,arrowlength=1.4,arrowinset=0.0]{->}(4.8985577,-0.9014423)(7.2985578,-3.3014421)
\psline[linecolor=blue, linewidth=0.04, arrowsize=0.05291667cm 2.0,arrowlength=1.4,arrowinset=0.0]{->}(7.2985578,-0.9014423)(4.8985577,-3.3014421)
\psline[linecolor=blue, linewidth=0.04, arrowsize=0.05291667cm 2.0,arrowlength=1.4,arrowinset=0.0]{->}(4.8985577,-0.9014423)(2.4985578,-3.3014421)
\psline[linecolor=blue, linewidth=0.04, arrowsize=0.05291667cm 2.0,arrowlength=1.4,arrowinset=0.0]{->}(2.4985578,-3.3014421)(3.6985579,-5.7014422)
\psline[linecolor=blue, linewidth=0.04, arrowsize=0.05291667cm 2.0,arrowlength=1.4,arrowinset=0.0]{->}(4.8985577,-3.3014421)(3.6985579,-5.7014422)
\psline[linecolor=blue, linewidth=0.04, arrowsize=0.05291667cm 2.0,arrowlength=1.4,arrowinset=0.0]{->}(7.2985578,-3.3014421)(3.6985579,-5.7014422)
\psline[linecolor=blue, linewidth=0.04, arrowsize=0.05291667cm 2.0,arrowlength=1.4,arrowinset=0.0]{->}(0.09855774,-3.3014421)(2.4985578,-3.3014421)
\end{pspicture}
}}\\
(c)&
\adjustbox{valign=t}{\psscalebox{0.6 0.6} % Change this value to rescale the drawing.
{
\begin{pspicture}(0.5,-5.8)(7,1.5971155)
\definecolor{colour0}{rgb}{0.0,0.0,1.0}
\psdots[linecolor=black, dotsize=0.2](3.6985579,1.4985577)
\psdots[linecolor=black, dotsize=0.2](2.4985578,-0.9014423)
\psdots[linecolor=black, dotsize=0.2](4.8985577,-0.9014423)
\psdots[linecolor=black, dotsize=0.2](7.2985578,-0.9014423)
\psdots[linecolor=black, dotsize=0.2](0.09855774,-0.9014423)
\psdots[linecolor=black, dotsize=0.2](2.4985578,-3.3014421)
\psdots[linecolor=black, dotsize=0.2](4.8985577,-3.3014421)
\psdots[linecolor=black, dotsize=0.2](7.2985578,-3.3014421)
\psdots[linecolor=black, dotsize=0.2](0.09855774,-3.3014421)
\psdots[linecolor=black, dotsize=0.2](3.6985579,-5.7014422)
\psline[linecolor=black, linewidth=0.04, arrowsize=0.05291667cm 2.0,arrowlength=1.4,arrowinset=0.0]{->}(3.6985579,1.4985577)(3.6985579,1.4985577)(7.2985578,-0.9014423)
\psline[linecolor=black, linewidth=0.04, arrowsize=0.05291667cm 2.0,arrowlength=1.4,arrowinset=0.0]{->}(3.6985579,1.4985577)(3.6985579,1.4985577)(4.8985577,-0.9014423)
\psline[linecolor=black, linewidth=0.04, arrowsize=0.05291667cm 2.0,arrowlength=1.4,arrowinset=0.0]{->}(3.6985579,1.4985577)(2.4985578,-0.9014423)
\psline[linecolor=black, linewidth=0.04, arrowsize=0.05291667cm 2.0,arrowlength=1.4,arrowinset=0.0]{->}(3.6985579,1.4985577)(0.09855774,-0.9014423)
\psline[linecolor=black, linewidth=0.04, arrowsize=0.05291667cm 2.0,arrowlength=1.4,arrowinset=0.0]{->}(0.09855774,-0.9014423)(0.09855774,-3.3014421)
\psline[linecolor=black, linewidth=0.04, arrowsize=0.05291667cm 2.0,arrowlength=1.4,arrowinset=0.0]{->}(2.4985578,-0.9014423)(2.4985578,-3.3014421)
\psline[linecolor=black, linewidth=0.04, arrowsize=0.05291667cm 2.0,arrowlength=1.4,arrowinset=0.0]{->}(7.2985578,-0.9014423)(7.2985578,-3.3014421)
\psline[linecolor=black, linewidth=0.04, arrowsize=0.05291667cm 2.0,arrowlength=1.4,arrowinset=0.0]{->}(7.2985578,-0.9014423)(0.09855774,-3.3014421)
\psline[linecolor=black, linewidth=0.04, arrowsize=0.05291667cm 2.0,arrowlength=1.4,arrowinset=0.0]{->}(2.4985578,-0.9014423)(4.8985577,-3.3014421)
\psline[linecolor=black, linewidth=0.04, arrowsize=0.05291667cm 2.0,arrowlength=1.4,arrowinset=0.0]{->}(4.8985577,-0.9014423)(7.2985578,-3.3014421)
\psline[linecolor=black, linewidth=0.04, arrowsize=0.05291667cm 2.0,arrowlength=1.4,arrowinset=0.0]{->}(7.2985578,-0.9014423)(4.8985577,-3.3014421)
\psline[linecolor=black, linewidth=0.04, arrowsize=0.05291667cm 2.0,arrowlength=1.4,arrowinset=0.0]{->}(4.8985577,-0.9014423)(2.4985578,-3.3014421)
\psline[linecolor=black, linewidth=0.04, arrowsize=0.05291667cm 2.0,arrowlength=1.4,arrowinset=0.0]{->}(2.4985578,-3.3014421)(3.6985579,-5.7014422)
\psline[linecolor=black, linewidth=0.04, arrowsize=0.05291667cm 2.0,arrowlength=1.4,arrowinset=0.0]{->}(4.8985577,-3.3014421)(3.6985579,-5.7014422)
\psline[linecolor=black, linewidth=0.04, arrowsize=0.05291667cm 2.0,arrowlength=1.4,arrowinset=0.0]{->}(7.2985578,-3.3014421)(3.6985579,-5.7014422)
\psline[linecolor=black, linewidth=0.04, arrowsize=0.05291667cm 2.0,arrowlength=1.4,arrowinset=0.0]{->}(0.09855774,-3.3014421)(2.4985578,-3.3014421)
\pspolygon[linecolor=black, linewidth=0.04, fillstyle=solid,fillcolor=colour0, opacity=0.3](2.4985578,-0.9014423)(3.6985579,1.4985577)(7.2985578,-0.9014423)(4.8985577,-3.3014421)(4.8985577,-3.3014421)
\pspolygon[linecolor=black, linewidth=0.04, fillstyle=solid,fillcolor=colour0, opacity=0.3](4.8985577,-3.3014421)(7.2985578,-0.9014423)(7.2985578,-3.3014421)(3.6985579,-5.7014422)(3.6985579,-5.7014422)
\pspolygon[linecolor=black, linewidth=0.04, fillstyle=solid,fillcolor=colour0, opacity=0.3](2.4985578,-0.9014423)(4.8985577,-3.3014421)(3.6985579,-5.7014422)(2.4985578,-3.3014421)(2.4985578,-3.3014421)
\pspolygon[linecolor=black, linewidth=0.04, fillstyle=solid,fillcolor=colour0, opacity=0.3](3.6985579,1.4985577)(4.8985577,-0.9014423)(7.2985578,-3.3014421)(7.2985578,-0.9014423)(3.6985579,1.4985577)(3.6985579,1.4985577)
\pspolygon[linecolor=black, linewidth=0.04, fillstyle=solid,fillcolor=colour0, opacity=0.3](3.6985579,1.4985577)(4.8985577,-0.9014423)(2.4985578,-3.3014421)(2.4985578,-0.9014423)(2.4985578,-0.9014423)
\pspolygon[linecolor=black, linewidth=0.04, fillstyle=solid,fillcolor=colour0, opacity=0.3](2.4985578,-3.3014421)(4.8985577,-0.9014423)(7.2985578,-3.3014421)(3.6985579,-5.7014422)(3.6985579,-5.7014422)
\pspolygon[linecolor=black, linewidth=0.04, fillstyle=solid,fillcolor=colour0, opacity=0.3](3.6985579,1.4985577)(7.2985578,-0.9014423)(0.09855774,-3.3014421)(0.09855774,-0.9014423)(0.09855774,-0.9014423)
\end{pspicture}
}}&
(d)&
\adjustbox{valign=t}{\psscalebox{0.6 0.6} % Change this value to rescale the drawing.
{
\begin{pspicture}(0.5,-5.8)(7,1.5971155)
\definecolor{colour0}{rgb}{0.0,0.0,1.0}
\psdots[linecolor=black, dotsize=0.2](3.6985579,1.4985577)
\psdots[linecolor=black, dotsize=0.2](2.4985578,-0.9014423)
\psdots[linecolor=black, dotsize=0.2](4.8985577,-0.9014423)
\psdots[linecolor=black, dotsize=0.2](7.2985578,-0.9014423)
\psdots[linecolor=black, dotsize=0.2](0.09855774,-0.9014423)
\psdots[linecolor=black, dotsize=0.2](2.4985578,-3.3014421)
\psdots[linecolor=black, dotsize=0.2](4.8985577,-3.3014421)
\psdots[linecolor=black, dotsize=0.2](7.2985578,-3.3014421)
\psdots[linecolor=black, dotsize=0.2](0.09855774,-3.3014421)
\psdots[linecolor=black, dotsize=0.2](3.6985579,-5.7014422)
\psline[linecolor=black, linewidth=0.04, arrowsize=0.05291667cm 2.0,arrowlength=1.4,arrowinset=0.0]{->}(3.6985579,1.4985577)(3.6985579,1.4985577)(7.2985578,-0.9014423)
\psline[linecolor=black, linewidth=0.04, arrowsize=0.05291667cm 2.0,arrowlength=1.4,arrowinset=0.0]{->}(3.6985579,1.4985577)(3.6985579,1.4985577)(4.8985577,-0.9014423)
\psline[linecolor=black, linewidth=0.04, arrowsize=0.05291667cm 2.0,arrowlength=1.4,arrowinset=0.0]{->}(3.6985579,1.4985577)(2.4985578,-0.9014423)
\psline[linecolor=black, linewidth=0.04, arrowsize=0.05291667cm 2.0,arrowlength=1.4,arrowinset=0.0]{->}(3.6985579,1.4985577)(0.09855774,-0.9014423)
\psline[linecolor=black, linewidth=0.04, arrowsize=0.05291667cm 2.0,arrowlength=1.4,arrowinset=0.0]{->}(0.09855774,-0.9014423)(0.09855774,-3.3014421)
\psline[linecolor=black, linewidth=0.04, arrowsize=0.05291667cm 2.0,arrowlength=1.4,arrowinset=0.0]{->}(2.4985578,-0.9014423)(2.4985578,-3.3014421)
\psline[linecolor=black, linewidth=0.04, arrowsize=0.05291667cm 2.0,arrowlength=1.4,arrowinset=0.0]{->}(7.2985578,-0.9014423)(7.2985578,-3.3014421)
\psline[linecolor=black, linewidth=0.04, arrowsize=0.05291667cm 2.0,arrowlength=1.4,arrowinset=0.0]{->}(7.2985578,-0.9014423)(0.09855774,-3.3014421)
\psline[linecolor=black, linewidth=0.04, arrowsize=0.05291667cm 2.0,arrowlength=1.4,arrowinset=0.0]{->}(2.4985578,-0.9014423)(4.8985577,-3.3014421)
\psline[linecolor=black, linewidth=0.04, arrowsize=0.05291667cm 2.0,arrowlength=1.4,arrowinset=0.0]{->}(4.8985577,-0.9014423)(7.2985578,-3.3014421)
\psline[linecolor=black, linewidth=0.04, arrowsize=0.05291667cm 2.0,arrowlength=1.4,arrowinset=0.0]{->}(7.2985578,-0.9014423)(4.8985577,-3.3014421)
\psline[linecolor=black, linewidth=0.04, arrowsize=0.05291667cm 2.0,arrowlength=1.4,arrowinset=0.0]{->}(4.8985577,-0.9014423)(2.4985578,-3.3014421)
\psline[linecolor=black, linewidth=0.04, arrowsize=0.05291667cm 2.0,arrowlength=1.4,arrowinset=0.0]{->}(2.4985578,-3.3014421)(3.6985579,-5.7014422)
\psline[linecolor=black, linewidth=0.04, arrowsize=0.05291667cm 2.0,arrowlength=1.4,arrowinset=0.0]{->}(4.8985577,-3.3014421)(3.6985579,-5.7014422)
\psline[linecolor=black, linewidth=0.04, arrowsize=0.05291667cm 2.0,arrowlength=1.4,arrowinset=0.0]{->}(7.2985578,-3.3014421)(3.6985579,-5.7014422)
\psline[linecolor=black, linewidth=0.04, arrowsize=0.05291667cm 2.0,arrowlength=1.4,arrowinset=0.0]{->}(0.09855774,-3.3014421)(2.4985578,-3.3014421)
\pspolygon[linecolor=black, linewidth=0.04, fillstyle=solid,fillcolor=colour0, opacity=0.3](2.4985578,-0.9014423)(3.6985579,1.4985577)(7.2985578,-0.9014423)(4.8985577,-3.3014421)(4.8985577,-3.3014421)
\pspolygon[linecolor=black, linewidth=0.04, fillstyle=solid,fillcolor=colour0, opacity=0.3](4.8985577,-3.3014421)(7.2985578,-0.9014423)(7.2985578,-3.3014421)(3.6985579,-5.7014422)(3.6985579,-5.7014422)
\pspolygon[linecolor=black, linewidth=0.04, fillstyle=solid,fillcolor=colour0, opacity=0.3](2.4985578,-0.9014423)(4.8985577,-3.3014421)(3.6985579,-5.7014422)(2.4985578,-3.3014421)(2.4985578,-3.3014421)
\pspolygon[linecolor=black, linewidth=0.04, fillstyle=solid,fillcolor=colour0, opacity=0.3](3.6985579,1.4985577)(4.8985577,-0.9014423)(7.2985578,-3.3014421)(7.2985578,-0.9014423)(3.6985579,1.4985577)(3.6985579,1.4985577)
\pspolygon[linecolor=black, linewidth=0.04, fillstyle=solid,fillcolor=colour0, opacity=0.3](3.6985579,1.4985577)(4.8985577,-0.9014423)(2.4985578,-3.3014421)(2.4985578,-0.9014423)(2.4985578,-0.9014423)
\pspolygon[linecolor=black, linewidth=0.04, fillstyle=solid,fillcolor=colour0, opacity=0.6](2.4985578,-3.3014421)(4.8985577,-0.9014423)(7.2985578,-3.3014421)(3.6985579,-5.7014422)(3.6985579,-5.7014422)
\end{pspicture}
}}
\end{tabular}
\egroup
\caption{Faces of dimension 0 are shown in (a), of dimension 1 in (b), 2 in (c) and 3 in (d) as they are added to the complex.}
\label{fig:cx1}
\end{figure}


\end{Ex} 

\subsection{Prodsimplicial Homology Groups for Directed Graphs}
\label{prodhomgps}
In this section we describe a boundary operator for prodsimplicial cells, which allows us to compute homology groups for prodsimplicial cell complexes. Although a chain complex structure is defined on a prodsimplicial complex as a special case of CW-complexes, we present an explicit boundary operator on prodsimplicial cells using the product rule for computational purposes.

Given a prodsimplicial cell complex $\Gamma$, a {\em prodsimplicial $N$-chain group} on $\Gamma$, denoted $C_N(\Gamma)$, is the free abelian group generated by oriented $N$-dimensional prodsimplicial cells of $\Gamma$. Its typical element, a {\em prodsimplicial $N$-chain}, is a finite formal linear combination of $N$-dimensional prodsimplicial cells with integer coefficients. In the Cartesian product of a simplex and a chain, the product distributes over the sum; that is \[\Delta^{n_0} \square \left(\sum_{i=1}^k \Delta^{n_i} \right)= \sum_{i=1}^k (\Delta^{n_0} \square  \Delta^{n_i}). \]

Recall that the boundary operator for simplices is defined by
\[ \partial_n(\Delta^n) = \displaystyle \sum_{i=0}^n (-1)^i [v_0,v_1,\ldots, v_{i-1},\hat{v_i},v_{i+1},\ldots, v_n],
\]
where $\hat{v_i}$ indicates that vertex $v_i$ has been deleted from the simplex. For brevity, we use $[\hat{v_i}]$ to denote the simplex $[v_0,v_1,\ldots, v_{i-1},\hat{v_i},v_{i+1},\ldots, v_n]$ in computations.

\begin{Def}[boundary operator]\label{boundary}
Let $P= \Square_{i=1}^k \Delta^{n_i}$ be a prodsimplicial $N$-cell, where $N = \sum_{i=1}^k n_i$. 
For $1 \leq i \leq k$, we use the notation \[\langle\overline{\partial_{n_i} \Delta^{n_i}}\rangle=\Delta^{n_1} \square \Delta^{n_2}\square \cdots \square \Delta^{n_{i-1}} \square \partial_{n_i} \left(\Delta^{n_{i}}\right) \square \Delta^{n_{i+1}} \square \cdots \square \Delta^{n_k}.\] 
For $1 \leq i < j \leq k$, let \[\langle\overline{\partial_{n_i} \Delta^{n_i} \partial_{n_j}\Delta^{n_j}}\rangle= \Delta^{n_1} \square  \cdots \square \partial_{n_i} \left(\Delta^{n_{i}}\right)\square \cdots \square \partial_{n_j} \left(\Delta^{n_{j}}\right) \square \ldots \square\Delta^{n_k}\]
The $N$-dimensional {\em boundary operator} applied to $P$ is defined as
\begin{align*}
\displaystyle \partial_N \left(P \right) =  \partial_N \left(\Square_{i=1}^k \Delta^{n_i}\right)&= \sum_{i=1}^k (-1)^{\alpha(i)} \displaystyle [\overline{\partial_{n_i} \Delta^{n_i}}]
\end{align*}
where $\alpha(i) =\sum_{\ell=1}^{i-1} n_\ell$ is the sum of the dimensions of the factors preceding the $i$th factor.
\end{Def}
Since the boundary operator can only be applied to oriented chains ($[\Delta^n]$ and $[P]$), and not graphs ($\Delta^n$ and $G$), we omit the square brackets for brevity. 

Note that given a prodsimplicial cell complex $\Gamma$, $\partial_N$ defines a group homomorphism $\partial_N: C_N(\Gamma) \longrightarrow C_{N-1}(\Gamma)$. We verify that the above defined operator indeed defines a chain complex. Recall, for a simplex $\Delta^{n+1}$  with vertices $\{v_0,v_1, \ldots, v_{n+1}\}$, we have $(\partial_n \circ \partial_{n+1})(\Delta^{n+1})=0$.

\begin{Prop}[$\partial^2=0$ on prodsimplicial cells]
Let $P = \Square_{i=1}^k \Delta^{n_i}$ be a prodsimplicial $(N+1)$-cell, where $N+1 = \sum_{i=1}^k n_i$. Then $(\partial_N \circ \partial_{N+1})(P)=0$.
\end{Prop}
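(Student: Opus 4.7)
The boundary operator on $P=\Square_{i=1}^k\Delta^{n_i}$ is, up to signs, the graded Leibniz rule associated to a tensor product of simplicial chain complexes. So I would prove $\partial^2=0$ by expanding $\partial_N\partial_{N+1}(P)$ as a double sum indexed by an unordered pair of factors and showing that diagonal terms vanish from the simplicial identity and off-diagonal terms cancel in pairs via the sign conventions.

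\textbf{Step 1 (expand once).} Apply the definition to get
\[
\partial_{N+1}(P)=\sum_{i=1}^k(-1)^{\alpha(i)}\,\Delta^{n_1}\square\cdots\square\partial_{n_i}\Delta^{n_i}\square\cdots\square\Delta^{n_k},
\]
where $\alpha(i)=\sum_{\ell<i}n_\ell$. Each summand is a chain supported on prodsimplicial $N$-cells whose $i$th factor has dimension $n_i-1$; call this reduced dimension sequence $n'$, so that $\alpha'(\ell)$ denotes the analogue of $\alpha$ for $n'$. A direct check gives $\alpha'(\ell)=\alpha(\ell)$ for $\ell\le i$ and $\alpha'(\ell)=\alpha(\ell)-1$ for $\ell>i$.

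\textbf{Step 2 (expand twice, split the sum).} Apply $\partial_N$ to each summand, again distributing over the $k$ factors, to obtain
\[
\partial_N\partial_{N+1}(P)=\sum_{i=1}^k\sum_{\ell=1}^k(-1)^{\alpha(i)+\alpha'(\ell)}\,\langle\overline{\partial_{n_i}\Delta^{n_i}\,\partial_{n_\ell}\Delta^{n_\ell}}\rangle,
\]
interpreting the $\ell=i$ term as having $\partial_{n_i-1}\partial_{n_i}\Delta^{n_i}$ in the $i$th slot. I split this double sum into the \emph{diagonal} part $\ell=i$ and the \emph{off-diagonal} part $\ell\neq i$.

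\textbf{Step 3 (diagonal vanishing).} For $\ell=i$, the $i$th factor of the summand is $\partial_{n_i-1}\partial_{n_i}\Delta^{n_i}$; since the product distributes over sums and $(\partial\circ\partial)(\Delta^{n_i})=0$ by the simplicial identity recalled in the paper, these terms are zero.

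\textbf{Step 4 (off-diagonal cancellation).} For $i\neq \ell$, group the sum into unordered pairs $\{i,\ell\}$. The two orderings produce identical prodsimplicial chains $\langle\overline{\partial_{n_i}\Delta^{n_i}\,\partial_{n_\ell}\Delta^{n_\ell}}\rangle$ but with different signs. Assume $i<\ell$. The ordering (apply $\partial$ to $i$ first, then $\ell$) contributes sign $(-1)^{\alpha(i)+\alpha'(\ell)}=(-1)^{\alpha(i)+\alpha(\ell)-1}$ using Step 1, while the ordering (apply to $\ell$ first, then $i$) contributes $(-1)^{\alpha(\ell)+\alpha(i)}$ because nothing before position $i$ is altered when reducing position $\ell>i$. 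These two signs are opposite, so the contributions cancel.

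\textbf{Expected obstacle.} No deep idea is required; the entire difficulty is bookkeeping the sign exponents $\alpha$ and $\alpha'$ under which factor's dimension is decreased first, and verifying carefully that the off-diagonal chains produced by the two orderings really are the same prodsimplicial chain so that their coefficients can be added. Getting the sign computation in Step 4 exactly right is the only place where a mistake could occur.
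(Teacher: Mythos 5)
Your proposal is correct and follows essentially the same route as the paper's proof: expand $\partial_N\circ\partial_{N+1}$ into a double sum, kill the diagonal terms using $(\partial\circ\partial)(\Delta^{n_i})=0$, and cancel the off-diagonal terms in pairs via the observation that reducing the earlier factor first shifts the later factor's sign exponent by $1$ (your $\alpha'(\ell)=\alpha(\ell)-1$ for $\ell>i$ is exactly the paper's ``$\alpha(j)-1$'' remark). Your Step 4 sign check is the same cancellation the paper invokes, written out slightly more explicitly.
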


\begin{proof}
By Definition~\ref{boundary}, we have that
\begin{align*}
(\partial_N \circ \partial_{N+1})(P)&= (\partial_N \circ \partial_{N+1})\left(\Square_{i=1}^k \Delta^{n_i}\right)\\
=&\partial_N \left( \sum_{i=1}^k (-1)^{\alpha(i)} [\overline{\partial_{n_i} \Delta^{n_i}}] \right)\\
=& \sum_{j<i} (-1)^{\alpha(i)} (-1)^{\alpha(j)} [ \overline{\partial_{n_j}\Delta^{n_j}}\overline{\partial_{n_i} \Delta^{n_i}}] +
\sum_{j>i} (-1)^{\alpha(i)}(-1)^{\alpha(j)-1}[ \overline{\partial_{n_i}\Delta^{n_i}}\overline{\partial_{n_j} \Delta^{n_j}}]\\
+& (\partial_{n_i-1} \circ \partial_{n_i})(\Delta^{n_i}).
\end{align*}
The term $(\partial_{n_i-1} \circ \partial_{n_i})(\Delta^{n_i})$ is zero. Note that when $j>i$, the dimension of the $i$th term has been decreased by one and hence the exponent on $(-1)$ for the sum over $j>i$ is $\alpha(j)-1$.

Due to this difference of 1, the two sums (over $j<i$ and $j>i$) have the same terms with opposite signs, so the final expression equals zero.
\end{proof}

With this boundary operator we 
can compute cycle groups, boundary groups, homology groups, and Betti numbers for prodsimplicial complexes of directed graphs. 
\section{Betti Numbers and Generators  for
Consistently Directed Graphs}
\label{digraphbettis}

We examine generators for the first and second homology groups and investigate possible values of Betti numbers in prodsimplicial complexes of directed graphs. We adopt the notation $\beta_n(\Gamma(G))$ to denote the $n$th Betti number of the prodsimplicial complex associated with a digraph $G$, where $\Gamma(G)$ is built through the gluing process defined in \Cref{gluing}. For brevity, we abuse the notation and write $\beta_n(G)$ instead.

As is well known, $\beta_0$ indicates the number of connected components. Since we study consistently directed digraphs, which consist of a single connected component, we take $\beta_0(G) = 1$ for all digraphs of interest.

By definition, consistently directed graphs admit no cycles in the graph theoretic sense. In the work that follows, where it causes no confusion, we use the term $n$-cycles to refer to elements of cycle groups. 

\subsection{Generators of the First Homology Groups}
\Cref{fig:cd3vs} shows two connected digraphs on three vertices. The one in  \Cref{fig:cd3vs}(a) is a closed path of the form $(v_0,v_1, v_2=v_0)$ and forms a 1-cycle, while simplicial digraphs, as shown in \Cref{fig:cd3vs}(b) do not contribute to $\beta_1$. Although closed paths of the form $(v_0,v_1, \ldots, v_n=v_0)$ contribute to $\beta_1$, they are absent in consistently directed graphs, so we exclude them in the search for homology group generators. 
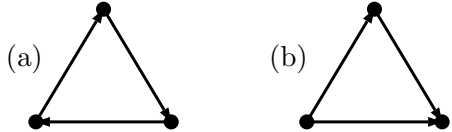
\begin{figure}[h]
\begin{center}
\psscalebox{1.0 1.0} % Change this value to rescale the drawing.
{
\begin{pspicture}(0,-2.55)(5.597115,-0.85288453)
\psdots[linecolor=black, dotsize=0.2](0.09855774,-2.4514422)
\psdots[linecolor=black, dotsize=0.2](0.99855775,-0.95144224)
\psdots[linecolor=black, dotsize=0.2](1.8985578,-2.4514422)
\rput[bl](-0.3,-1.8){(a)}
\psdots[linecolor=black, dotsize=0.2](3.6985579,-2.4514422)
\psdots[linecolor=black, dotsize=0.2](4.598558,-0.95144224)
\psdots[linecolor=black, dotsize=0.2](5.4985576,-2.4514422)
\rput[bl](3.2,-1.8){(b)}
\psline[linecolor=black, linewidth=0.04, arrowsize=0.05291667cm 2.0,arrowlength=1.4,arrowinset=0.0]{->}(0.09855774,-2.4514422)(0.99855775,-0.95144224)
\psline[linecolor=black, linewidth=0.04, arrowsize=0.05291667cm 2.0,arrowlength=1.4,arrowinset=0.0]{->}(0.99855775,-0.95144224)(0.99855775,-0.95144224)(1.8985578,-2.4514422)
\psline[linecolor=black, linewidth=0.04, arrowsize=0.05291667cm 2.0,arrowlength=1.4,arrowinset=0.0]{->}(1.8985578,-2.4514422)(0.09855774,-2.4514422)
\psline[linecolor=black, linewidth=0.04, arrowsize=0.05291667cm 2.0,arrowlength=1.4,arrowinset=0.0]{->}(3.6985579,-2.4514422)(4.598558,-0.95144224)
\psline[linecolor=black, linewidth=0.04, arrowsize=0.05291667cm 2.0,arrowlength=1.4,arrowinset=0.0]{->}(4.598558,-0.95144224)(5.4985576,-2.4514422)
\psline[linecolor=black, linewidth=0.04, arrowsize=0.05291667cm 2.0,arrowlength=1.4,arrowinset=0.0]{->}(3.6985579,-2.4514422)(5.4985576,-2.4514422)
\end{pspicture}
}
\end{center}
\caption{The only two possible connected digraphs on three vertices.}
\label{fig:cd3vs}
\end{figure}

The only two consistently directed graphs on four vertices are shown in \Cref{fig:cd4vs}. One is the union of an edge and a path of length three, as depicted in \Cref{fig:cd4vs}(a), which results in a complex homeomorphic to the circle $S^1$. Another is a square cell of the form $\Delta^1 \square \Delta^1$, shown in \Cref{fig:cd4vs}(b). We summarize our observations as follows.
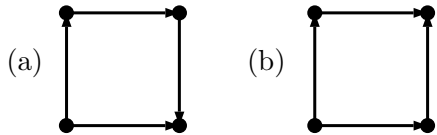
\begin{figure}[h]
\begin{center}
\psscalebox{1.0 1.0} % Change this value to rescale the drawing.
{
\begin{pspicture}(0,-2.55)(4.9971156,-0.85288453)
\psdots[linecolor=black, dotsize=0.2](0.09855774,-2.4514422)
\psdots[linecolor=black, dotsize=0.2](0.09855774,-0.95144224)
\psdots[linecolor=black, dotsize=0.2](1.5985577,-0.95144224)
\psdots[linecolor=black, dotsize=0.2](1.5985577,-2.4514422)
\psdots[linecolor=black, dotsize=0.2](3.3985577,-2.4514422)
\psdots[linecolor=black, dotsize=0.2](3.3985577,-0.95144224)
\psdots[linecolor=black, dotsize=0.2](4.8985577,-0.95144224)
\psdots[linecolor=black, dotsize=0.2](4.8985577,-2.4514422)
\rput[bl](-0.7,-1.8){(a)}
\psline[linecolor=black, linewidth=0.04, arrowsize=0.05291667cm 2.0,arrowlength=1.4,arrowinset=0.0]{->}(0.09855774,-2.4514422)(0.09855774,-0.95144224)
\psline[linecolor=black, linewidth=0.04, arrowsize=0.05291667cm 2.0,arrowlength=1.4,arrowinset=0.0]{->}(0.09855774,-0.95144224)(1.5985577,-0.95144224)
\psline[linecolor=black, linewidth=0.04, arrowsize=0.05291667cm 2.0,arrowlength=1.4,arrowinset=0.0]{->}(1.5985577,-0.95144224)(1.5985577,-2.4514422)
\psline[linecolor=black, linewidth=0.04, arrowsize=0.05291667cm 2.0,arrowlength=1.4,arrowinset=0.0]{->}(0.09855774,-2.4514422)(1.5985577,-2.4514422)
\rput[bl](2.5,-1.8){(b)}
\psline[linecolor=black, linewidth=0.04, arrowsize=0.05291667cm 2.0,arrowlength=1.4,arrowinset=0.0]{->}(3.3985577,-2.4514422)(3.3985577,-0.95144224)
\psline[linecolor=black, linewidth=0.04, arrowsize=0.05291667cm 2.0,arrowlength=1.4,arrowinset=0.0]{->}(3.3985577,-0.95144224)(4.8985577,-0.95144224)
\psline[linecolor=black, linewidth=0.04, arrowsize=0.05291667cm 2.0,arrowlength=1.4,arrowinset=0.0]{<-}(4.8985577,-0.95144224)(4.8985577,-2.4514422)
\psline[linecolor=black, linewidth=0.04, arrowsize=0.05291667cm 2.0,arrowlength=1.4,arrowinset=0.0]{->}(3.3985577,-2.4514422)(4.8985577,-2.4514422)
\end{pspicture}
}
\caption{All consistenty directed graphs on four vertices.}
\label{fig:cd4vs}
\end{center}
\end{figure}

\begin{Lem}
\label{badsquare}
Let $G = (V,E)$ where $V = \{v_0, v_1, v_2, v_3\}$ and $E = \{[v_0,v_3], [v_0,v_1],[v_1,v_2],[v_2,v_3] \}$, as in \Cref{fig:cd4vs}(a). Then $\beta_1(G) = 1$ and $\beta_n(G) = 0$ for all $n > 2$.
\end{Lem}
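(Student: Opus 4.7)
The plan is to compute $\beta_1(G)$ directly from the chain complex after showing that $\Gamma(G)$ has no cells of dimension $\geq 2$, so that the only interesting information lives in degree one.

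First, I would verify that $\Gamma(G)$ is the $1$-skeleton of itself, i.e., no prodsimplicial cell of dimension $\geq 2$ can be attached. By the gluing process in \Cref{def:prodcomplex}, it suffices to rule out $2$-cells, since no higher cell can appear without first attaching the facets. The only prodsimplicial $2$-cells are $\Delta^2$ and $\Delta^1\square\Delta^1$. A $\Delta^2$ would require three vertices $u_0,u_1,u_2$ with edges $[u_0,u_1]$, $[u_1,u_2]$, and $[u_0,u_2]$; scanning the edge set $E$, none of the triples from $\{v_0,v_1,v_2,v_3\}$ admits such a configuration (for example, $v_0,v_1,v_2$ is missing $[v_0,v_2]$, and so on). A $\Delta^1\square\Delta^1$ would require a unique source $u_0$ and unique target $u_3$ with two internally disjoint directed paths of length two between them; the only candidate pair is $(u_0,u_3)=(v_0,v_3)$, but the only length-two path from $v_0$ to $v_3$ in $G$ is $v_0\to v_1\to v_2\to v_3$ (length three), so no such square exists. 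Hence $\Gamma(G)=\Gamma^{(1)}$, which immediately gives $\beta_n(G)=0$ for $n\geq 2$.

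Next, I would compute $\beta_1(G)$ from the chain complex $0\to C_1\xrightarrow{\partial_1}C_0\to 0$, where $C_1\cong\Z^4$ is generated by the four edges and $C_0\cong\Z^4$ by the four vertices. Since $G$ is connected, $\mathrm{im}(\partial_1)$ has rank $|V|-1=3$, so $\ker\partial_1$ has rank $|E|-3=1$. As $C_2=0$, we get $H_1(\Gamma(G))=\ker\partial_1\cong\Z$, hence $\beta_1(G)=1$. For concreteness, an explicit generator is the $1$-cycle
\[
[v_0,v_1]+[v_1,v_2]+[v_2,v_3]-[v_0,v_3],
\]
which one checks has zero boundary and, since there are no $2$-chains, is not a boundary.

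The only real content is the first step—ruling out the attachment of any $2$-cell—and this is where I expect a careful reader to want detail; once that is dispatched, the homology computation is essentially the Euler-characteristic calculation $\beta_1=|E|-|V|+\beta_0=4-4+1=1$ for a connected graph viewed as a $1$-complex.
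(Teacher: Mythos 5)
Your proposal is correct and matches the paper's (essentially unstated) argument: the paper simply observes that this graph yields a complex homeomorphic to $S^1$, and your work of ruling out any $\Delta^2$ or $\Delta^1\square\Delta^1$ cell and then computing $\beta_1=|E|-|V|+1=1$ is exactly the verification that justifies that observation. Your argument also covers $\beta_2=0$, which the lemma's phrasing ``$n>2$'' omits but which clearly holds for the same reason.
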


We can build graphs with arbitrarily large $\beta_1$ values by attaching edge-disjoint paths of length 3 from source to target as follows. 

\begin{Lem}
\label{Lem:multiloops}
For all $k\geq 0$, there exists a graph $G$ such that $\beta_1(G) = k$ and $\beta_n(G) = 0$ for $n> 2$.
\end{Lem}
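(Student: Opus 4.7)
The plan is to construct $G_k$ explicitly by pasting $k$ copies of the ``bad square'' from \Cref{badsquare} along a common source and target. Concretely, for $k\geq 1$, let $G_k=(V_k,E_k)$ with
\[
V_k=\{s,t,u_1,v_1,\ldots,u_k,v_k\},\qquad E_k=\{[s,t]\}\cup\bigcup_{i=1}^{k}\bigl\{[s,u_i],[u_i,v_i],[v_i,t]\bigr\},
\]
and for $k=0$ take $G_0$ to be a single directed edge. First I would check that $G_k$ is consistently directed: $s$ has no incoming edges and $t$ has no outgoing edges, so they are the unique source and target, and the topological order $s<u_i<v_i<t$ rules out directed cycles.

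Next, I would argue that the prodsimplicial complex $\Gamma(G_k)$ contains no cells of dimension $\geq 2$. The only candidate 2-cells are the 2-simplex $\Delta^2$ and the square $\Delta^1\square\Delta^1$. A $\Delta^2$ would require three vertices with all three directed edges present, forming a transitive triangle; inspection of the adjacency structure of $G_k$ (where $s$ is adjacent to each $u_i$ and to $t$, each $u_i$ only to $v_i$, and each $v_i$ only to $t$) shows no such triple exists. A $\Delta^1\square\Delta^1$ would require four vertices $a,b,c,d$ with edges $[a,b],[a,c],[b,d],[c,d]$, i.e.\ two distinct length-$2$ directed paths between a common pair; but the only pair in $G_k$ joined by more than one directed path is $(s,t)$, and those paths have lengths $1$ and $3$, never two of length $2$. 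Since no 2-cells are attached, no higher-dimensional prodsimplicial cells can be glued either, because their facets would include 2-cells.

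Consequently $\Gamma(G_k)$ is a 1-dimensional CW complex (i.e.\ just the underlying graph), so its Betti numbers are read off from its Euler characteristic. Counting gives $|V_k|=2k+2$ and $|E_k|=3k+1$, hence
\[
\chi(\Gamma(G_k))=|V_k|-|E_k|=1-k.
\]
Since $G_k$ is connected, $\beta_0(G_k)=1$, and therefore $\beta_1(G_k)=\beta_0(G_k)-\chi(\Gamma(G_k))=k$, while $\beta_n(G_k)=0$ for all $n\geq 2$, which implies in particular the stated vanishing for $n>2$. The case $k=0$ is immediate from $G_0$ being contractible.

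The main obstacle is the cell-counting step: one must carefully rule out every potential 2-cell by checking the two prodsimplicial $2$-cell shapes against the adjacency structure of $G_k$. Once this is done, the Euler characteristic computation is routine, and the statement follows without needing an explicit basis for $H_1$ (although one could alternatively exhibit the $k$ independent $1$-cycles $[s,u_i]+[u_i,v_i]+[v_i,t]-[s,t]$ coming from the bad squares of \Cref{badsquare}).
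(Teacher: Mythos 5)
Your construction is exactly the graph the paper uses in its own proof (Figure~\ref{fig:multiloops}, with $s=v_0$, $t=v_{2k+1}$, and the $k$ edge-disjoint length-3 paths), so this is essentially the same approach; the paper simply asserts the complex is homotopy equivalent to a bouquet of $k$ circles, while you supply the useful extra detail that no prodsimplicial $2$-cells can be attached and then read off $\beta_1$ from the Euler characteristic. Both arguments are correct and lead to the same conclusion.
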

\begin{figure}[H]
\begin{center}
\psscalebox{1.0 1.0} % Change this value to rescale the drawing.
{
\begin{pspicture}(0,-4.96)(5.198558,0.28)
\psdots[linecolor=black, dotsize=0.2](0.1,-0.38)
\psdots[linecolor=black, dotsize=0.2](0.1,-4.38)
\psdots[linecolor=black, dotsize=0.2](1.1,-1.38)
\psdots[linecolor=black, dotsize=0.2](1.1,-3.38)
\psline[linecolor=black, linewidth=0.04, arrowsize=0.05291667cm 2.0,arrowlength=1.4,arrowinset=0.0]{->}(0.1,-0.48)(0.1,-4.38)
\psline[linecolor=black, linewidth=0.04, arrowsize=0.05291667cm 2.0,arrowlength=1.4,arrowinset=0.0]{->}(0.1,-0.38)(1.1,-1.38)
\psline[linecolor=black, linewidth=0.04, arrowsize=0.05291667cm 2.0,arrowlength=1.4,arrowinset=0.0]{->}(1.1,-1.38)(1.1,-3.38)
\psline[linecolor=black, linewidth=0.04, arrowsize=0.05291667cm 2.0,arrowlength=1.4,arrowinset=0.0]{->}(1.1,-3.38)(0.1,-4.38)
\psdots[linecolor=black, dotsize=0.2](2.1,-1.38)
\psdots[linecolor=black, dotsize=0.2](2.1,-3.38)
\psline[linecolor=black, linewidth=0.04, arrowsize=0.05291667cm 2.0,arrowlength=1.4,arrowinset=0.0]{->}(0.1,-0.38)(2.1,-1.38)
\psline[linecolor=black, linewidth=0.04, arrowsize=0.05291667cm 2.0,arrowlength=1.4,arrowinset=0.0]{->}(2.1,-1.38)(2.1,-3.38)
\psline[linecolor=black, linewidth=0.04, arrowsize=0.05291667cm 2.0,arrowlength=1.4,arrowinset=0.0]{->}(2.1,-3.38)(0.1,-4.38)
\psdots[linecolor=black, dotsize=0.2](5.1,-1.38)
\psdots[linecolor=black, dotsize=0.2](5.1,-3.38)
\psline[linecolor=black, linewidth=0.04, arrowsize=0.05291667cm 2.0,arrowlength=1.4,arrowinset=0.0]{->}(0.1,-0.38)(5.1,-1.38)
\psline[linecolor=black, linewidth=0.04, arrowsize=0.05291667cm 2.0,arrowlength=1.4,arrowinset=0.0]{->}(5.1,-1.38)(5.1,-3.38)
\psline[linecolor=black, linewidth=0.04, arrowsize=0.05291667cm 2.0,arrowlength=1.4,arrowinset=0.0]{->}(5.1,-3.38)(0.1,-4.38)
\rput[bl](0.0,0.02){$v_0$}
\rput[bl](0.0,-4.88){$v_{2k+1}$}
\rput[bl](0.3,-1.68){$v_1$}
\rput[bl](1.3,-1.68){$v_2$}
\rput[bl](4.2,-1.68){$v_k$}
\rput[bl](2.9,-2.48){$\cdots$}
\rput[bl](0.3,-3.08){$v_{k+1}$}
\rput[bl](1.3,-3.08){$v_{k+2}$}
\rput[bl](4.3,-3.08){$v_{2k}$}
\end{pspicture}
}
\caption{$k$ disjoint paths of length 3 running ``parallel'' to $[v_0,v_{2k+1}]$.}
\label{fig:multiloops}
\end{center}
\end{figure}
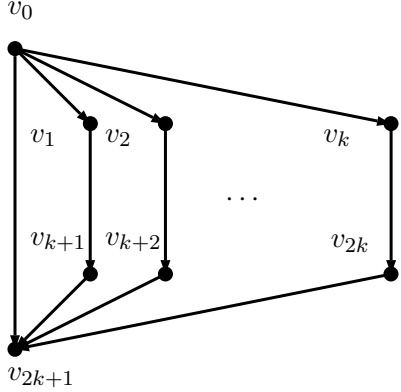

\begin{proof}
Let $G$ be the graph in \Cref{fig:multiloops}. Then, as a topological space, $G$ is homotopic equivalent to the bouquet of $k$ circles, hence $\beta_1(G) = k$ and $\beta_n(G) = 0$ for $n> 2$.
\end{proof}

\begin{Lem}
\label{prop:cycles}
For consistently directed graphs, all 1-cycles that represent nontrivial elements of the first homology group consist of paths with a common source and target $p_1 = (s=u_0,u_1,\ldots, u_m=t)$ and $p_2 = s=v_0, v_1, \ldots, v_n=t)$ where $\{u_1, u_2, \ldots, u_{m-1}\} \cap \{v_1, v_2, \ldots, v_{n-1}\} = \varnothing$.
\end{Lem}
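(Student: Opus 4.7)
The plan is to exploit the acyclicity of $G$ together with the balance condition $\partial c=0$ to decompose any nontrivial 1-cycle into a sum of ``bigons'' of the form $p_1-p_2$, where $p_1,p_2$ are interior-disjoint directed paths sharing a common source $s$ and common target $t$; if $c$ represents a nontrivial class in $H_1$, then at least one bigon in the decomposition must itself be nontrivial, exhibiting the pair of paths claimed by the lemma. As a warm-up, I would verify that the support $H\subseteq G$ of $c=\sum_e a_e e$, viewed as an undirected subgraph, cannot be a forest: if it were, picking a leaf $v$ of $H$ incident to a unique support edge $e$ would force $\partial c$ to pick up the nonzero coefficient $\pm a_e$ at $v$, contradicting $\partial c=0$. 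Hence $H$ contains an undirected cycle, which in the acyclic digraph $G$ cannot itself be directed, so it has at least one local source (both incident cycle-edges outgoing) and one local sink.

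The main step is a flow decomposition. Writing $c=c^+-c^-$ where $c^{\pm}\geq 0$ have disjoint supports, the identity $\partial c^+=\partial c^-$ (immediate from $\partial c=0$) shows that $c^+$ and $c^-$ determine the same multiset of net-source and net-sink vertices. In a DAG a non-negative integer 1-chain can be expressed as an integer combination of directed paths by a standard inductive procedure: repeatedly select a net-source $x$ of positive surplus, trace positively-weighted outgoing edges until reaching a net-sink, and subtract one unit of this directed path from the chain. Applied to $c^+$ and $c^-$ separately this yields $c^+=\sum_i m_i P_i$ and $c^-=\sum_j n_j Q_j$ with matching multisets of endpoints, and pairing paths that share both endpoints produces $c=\sum_k (P_k-Q_k)$, a $\mathbb{Z}$-linear combination of bigons.

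Finally, to enforce the interior-disjointness condition, I would iteratively split each paired bigon: whenever $P_k$ and $Q_k$ meet at a common interior vertex $w$, splitting both at $w$ rewrites $P_k-Q_k$ as a sum of two strictly shorter bigons, each sharing $w$ as an endpoint; the process terminates, leaving only interior-disjoint bigons. The step I expect to be most delicate is the integer flow decomposition: one must verify that the inductive path-peeling preserves non-negativity, terminates with integer multiplicities, and that the path selections from $c^+$ and $c^-$ can be coordinated so that endpoint multiplicities match exactly at each net-source and net-sink. Once these combinatorial details are in place, the conclusion that a nontrivial $c$ must contain at least one nontrivial bigon summand, and hence the desired pair $p_1,p_2$ of interior-disjoint directed paths with a common source and target, is immediate.
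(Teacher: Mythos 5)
The paper states this lemma without any proof, so your argument can only be judged on its own merits, and as written it has a genuine gap at the pairing step. Knowing $\partial c^+=\partial c^-$ tells you that $c^+$ and $c^-$ have the same excess at every vertex, hence the same multisets of net-sources and of net-sinks, but it does not tell you that the two path decompositions realize the same multiset of (source, sink) \emph{pairs}, and without that you cannot match each $P_i$ with a $Q_j$ sharing both endpoints. Concretely, take the consistently directed graph with vertices $s,v_0,\ldots,v_5,t$ and edges $[s,v_0],[s,v_2],[s,v_4]$, $[v_0,v_1],[v_2,v_1],[v_2,v_3],[v_4,v_3],[v_4,v_5],[v_0,v_5]$, $[v_1,t],[v_3,t],[v_5,t]$, and the $1$-cycle $c=[v_0,v_1]-[v_2,v_1]+[v_2,v_3]-[v_4,v_3]+[v_4,v_5]-[v_0,v_5]$ supported on the alternating hexagon. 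Then $c^+$ decomposes into the single-edge paths $v_0\to v_1$, $v_2\to v_3$, $v_4\to v_5$ and $c^-$ into $v_2\to v_1$, $v_4\to v_3$, $v_0\to v_5$; the source and sink multisets agree, yet no path of $c^+$ shares both endpoints with a path of $c^-$, and indeed $c$ is not a $\Z$-combination of bigons supported on its own edges. (This particular $c$ is a boundary, so it does not contradict the lemma, but it does contradict your decomposition claim, which you assert for arbitrary cycles and then specialize to nontrivial ones.)

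The repair needs the hypothesis you never invoke: $G$ has a unique source $s$ and a unique target $t$. In such a graph every vertex is reachable from $s$ and reaches $t$ by a directed path (follow in-edges backward, resp.\ out-edges forward; acyclicity and uniqueness of the source/target force termination at $s$, resp.\ $t$). Fix once and for all a directed path $A_x$ from $s$ to each vertex $x$ and a directed path $B_y$ from each vertex $y$ to $t$, and replace each $P_i$ running from $x_i$ to $y_i$ by $A_{x_i}+P_i+B_{y_i}$, likewise for the $Q_j$. Because the endpoint multisets of the two decompositions agree and the connectors depend only on the vertex, the added edges cancel in $c$, which becomes an integer combination of directed $s$--$t$ paths; these all share both endpoints, and they can be paired off since $\sum_i m_i=\sum_j n_j$ (both equal the total positive excess). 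Your splitting-at-common-interior-vertices step then goes through and terminates. One final caveat: what this establishes is that $H_1$ is generated by classes of interior-disjoint path pairs, which is the sensible reading of the lemma's loose phrasing, not that every nontrivial cycle literally equals a single such pair.
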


\subsection{Generators of the Second Homology Groups}
\label{generalH2}
This section will focus on directed graphs with vertices labeled $v_i$, so we reserve the notation $[v_0, v_1, \ldots, v_n]$ for simplices and label other low-dimensional prodsimplicial cells as sequences of vertices. For example, $[v_0,v_1,v_2,v_3]$ denotes a tetrahedron while $v_0v_1v_2v_3$ denotes a square. 

In dimension 2, the problem of finding all graphs whose corresponding prodsimplicial complex yields nontrivial 2-cycles is less straightforward. We present here a few of the simplest examples.

To consider generators of the second homology groups, we consider consistently directed graphs of at least five vertices, as Betti numbers for all consistently directed graphs on three and four vertices as shown in \Cref{fig:multiloops,fig:cd4vs}. It can be checked by inspection that the only connected and consistently directed graphs on five edges and five vertices are pairs of paths as in \Cref{prop:cycles}. For an example with six edges and five vertices with $\beta_2(G) = 1$, we consider the graph consisting of three squares as shown in \Cref{fig:3squareslabeled}. 

\begin{Lem}
\label{lem:banana}
Let $G$ be as shown in \Cref{fig:3squareslabeled}. Then $\beta_1(G) = 0$, $\beta_2(G) = 1$ and $\beta_n(G) = 0$ for all $n > 2$. 
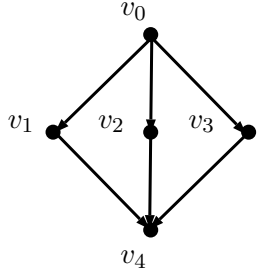
\begin{figure}
\begin{center}
\psscalebox{1.0 1.0} % Change this value to rescale the drawing.
{
\begin{pspicture}(0,-3.36)(3.2985578,0.28)
\psline[linecolor=black, linewidth=0.04, arrowsize=0.05291667cm 2.0,arrowlength=1.4,arrowinset=0.0]{->}(1.9139731,-0.22262017)(3.1725855,-1.5094359)
\psline[linecolor=black, linewidth=0.04, arrowsize=0.05291667cm 2.0,arrowlength=1.4,arrowinset=0.0]{->}(3.1725855,-1.5094359)(1.8857698,-2.7680483)
\psline[linecolor=black, linewidth=0.04, arrowsize=0.05291667cm 2.0,arrowlength=1.4,arrowinset=0.0]{->}(0.6271574,-1.4812326)(1.8857698,-2.7680483)
\psline[linecolor=black, linewidth=0.04, arrowsize=0.05291667cm 2.0,arrowlength=1.4,arrowinset=0.0]{->}(1.9139731,-0.22262017)(0.6271574,-1.4812326)
\psline[linecolor=black, linewidth=0.04, arrowsize=0.05291667cm 2.0,arrowlength=1.4,arrowinset=0.0]{->}(1.9139731,-0.22262017)(1.8998715,-1.4953343)
\psline[linecolor=black, linewidth=0.04, arrowsize=0.05291667cm 2.0,arrowlength=1.4,arrowinset=0.0]{->}(1.8998715,-1.4953343)(1.8857698,-2.7680483)
\rput[bl](1.5,0.02){$v_0$}
\rput[bl](0.0,-1.48){$v_1$}
\rput[bl](1.2,-1.48){$v_2$}
\rput[bl](2.4,-1.48){$v_3$}
\rput[bl](1.5,-3.28){$v_4$}
\psdots[linecolor=black, dotsize=0.2](1.9,-2.78)
\psdots[linecolor=black, dotsize=0.2](3.2,-1.48)
\psdots[linecolor=black, dotsize=0.2](1.9,-0.18)
\psdots[linecolor=black, dotsize=0.2](1.9,-1.48)
\psdots[linecolor=black, dotsize=0.2](0.6,-1.48)
\end{pspicture}
}
\caption{Consistently directed graph whose prodsimplicial complex consists of three squares, six edges and five vertices.}
\label{fig:3squareslabeled}

\end{center}
\end{figure}
\end{Lem}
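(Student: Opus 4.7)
The plan is to compute the prodsimplicial chain complex of $G$ directly from the cell inventory. First, I would enumerate all prodsimplicial cells. There are $5$ vertices and $6$ edges; the only product subgraphs that fit are squares $F_{ij}=\Delta^1\square\Delta^1$ with corner set $\{v_0,v_i,v_j,v_4\}$ for each pair $\{i,j\}\subseteq\{1,2,3\}$, giving three $2$-cells. No higher cell fits: $\Delta^2\square\Delta^1$ requires six vertices and $\Delta^1\square\Delta^1\square\Delta^1$ requires eight, while $G$ has five. Therefore $C_n(\Gamma(G))=0$ for $n\geq 3$, which immediately yields $\beta_n(G)=0$ for all $n>2$.

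Next, I would compute $\beta_2$ by finding the rank of $\partial_2$. Applying \Cref{boundary} with the distributive product rule (using that $\partial(\Delta^1)=v_{\text{target}}-v_{\text{source}}$), each square's boundary is the signed $4$-cycle around it,
\[
\partial_2(F_{ij})=e_{0i}+e_{j4}-e_{0j}-e_{i4},
\]
for a consistent orientation convention, where $e_{pq}$ denotes the edge $[v_p,v_q]$. A direct substitution-and-cancellation check exhibits the single relation
\[
\partial_2(F_{12})-\partial_2(F_{13})+\partial_2(F_{23})=0.
\]
Moreover any two of $\partial_2(F_{12}),\partial_2(F_{13}),\partial_2(F_{23})$ are linearly independent, because each contains edges absent from the other (for example $e_{02},e_{24}$ appear only in $\partial_2(F_{12})$ among $\{F_{12},F_{13}\}$). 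Hence $\operatorname{rank}(\partial_2)=2$ and $\operatorname{rank}(\ker\partial_2)=1$; since $C_3=0$, this gives $H_2\cong\ker\partial_2\cong\mathbb{Z}$ and $\beta_2(G)=1$.

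Finally, I would invoke the Euler--Poincar\'e identity to finish. Direct counting gives $\chi(\Gamma(G))=5-6+3=2$, and since $G$ is connected, $\beta_0(G)=1$. Combining this with $\beta_2(G)=1$ and $\beta_n(G)=0$ for $n>2$, we conclude $\beta_1(G)=0$.

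The only mildly delicate point, and the main thing to be careful with, is keeping the orientations of the three squares coherent so that the alternating sum of their boundaries truly cancels rather than simply being expressed up to sign; once the orientations from \Cref{def:orientation} are fixed, this is a short bookkeeping exercise. The identity above is the algebraic shadow of the topological observation that $F_{12}\cup F_{13}$ is a disk (two squares glued along the arc $v_0\to v_1\to v_4$) whose boundary is precisely $\partial F_{23}$, so that attaching $F_{23}$ produces a space homotopy equivalent to $S^2$, matching the Betti numbers claimed.
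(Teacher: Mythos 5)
Your argument is correct in substance and reaches the right conclusion, but it takes a genuinely different route from the paper. The paper's proof records the same cell inventory (five vertices, six edges, the three squares $v_0v_iv_4v_j$, and nothing in dimension $\geq 3$) and then concludes by a purely topological identification: the three squares are glued pairwise along the three arcs $v_0\to v_i\to v_4$, so the complex is homeomorphic to $S^2$ (a ``beach ball'' made of three lunes), from which the Betti numbers are read off. You instead carry out the algebraic computation: explicit $\partial_2$, the single relation among the three square boundaries, $\rank\partial_2=2$, hence $H_2\cong\Z$ since $C_3=0$, and then the Euler--Poincar\'e identity $5-6+3=2$ to force $\beta_1=0$. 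Your route is longer but more robust --- it does not require recognizing the homeomorphism type of the identification space, and it generalizes mechanically (it is essentially the computation underlying \Cref{lem:lantern}); the paper's route is shorter but rests on the unproved, though correct, assertion that the quotient space is a sphere.

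One slip to fix: as written, $\partial_2(F_{ij})=e_{0i}+e_{j4}-e_{0j}-e_{i4}$ is not a $1$-cycle (its image under $\partial_1$ is $2v_i-2v_j$), so it cannot be the boundary of a $2$-cell under any orientation convention. The signed $4$-cycle you intend is $\partial_2(F_{ij})=e_{0i}+e_{i4}-e_{0j}-e_{j4}$, that is, $c_i-c_j$ with $c_i=e_{0i}+e_{i4}$. With this correction everything else goes through verbatim: $\partial_2(F_{12})-\partial_2(F_{13})+\partial_2(F_{23})=(c_1-c_2)-(c_1-c_3)+(c_2-c_3)=0$, any two of the three boundaries remain independent, and the rank and Euler characteristic computations are unaffected.
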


\begin{proof}
We describe each chain group explicitly in terms of the vertices of $G$. Note that squares are labeled forming a cycle, following the source vertex with its lowest lexicographic order neighbor. The chain groups, $C_n$, in the prodsimplicial complex associated with the graph are described by generators as follows:
\begin{align*}
C_0 &= \langle [v_0], [v_1], [v_2], [v_3], [v_4] \rangle\\
C_1 &= \langle [v_0,v_1],[v_0,v_2],[v_0,v_3],[v_1,v_4],[v_2,v_4],[v_3,v_4]\rangle\\
C_2 &= \langle v_0v_1v_4v_2, v_0v_2v_4v_3, v_0v_1v_4v_3\rangle.
\end{align*}
In the graph any two paths of length 2 form a consistently directed square, so that the resulting prodsimplicial complex is homeomorphic to a sphere. It follows that $\beta_1(G) =0$, $\beta_2(G) = 1$ and $\beta_n(G) = 0$ for all $n > 2$ as desired.
\end{proof}
In addition to this, we present another graph on five vertices that has nontrivial second homology group that also yields a 2-cycle. 
\begin{Lem}
\label{lem:tennis}
Let $G$ be the graph in \Cref{fig:tennisballs}(a). Then $\beta_1(G) = 0$, $\beta_2(G) = 1$ and $\beta_n(G) = 0$ for all $n > 2$.
\end{Lem}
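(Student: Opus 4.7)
The goal is to show that $\Gamma(G)$ is, as a CW-complex, homotopy equivalent to a 2-sphere, which immediately yields $(\beta_0,\beta_1,\beta_2)=(1,0,1)$ and $\beta_n(G)=0$ for $n>2$. The plan mirrors the strategy used in the proof of \Cref{lem:banana}.

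First, I would enumerate the chain groups $C_0(\Gamma(G))$, $C_1(\Gamma(G))$, and $C_2(\Gamma(G))$ by listing all vertices, all oriented edges, and all oriented prodsimplicial 2-cells admitted by the gluing process of \Cref{def:prodcomplex}. The last step requires recording each triangle $\Delta^2$ and each square $\Delta^1 \square \Delta^1$ whose 1-skeleton embeds in $G$ in an orientation-preserving way. Second, I would argue that no higher-dimensional prodsimplicial cell attaches to $\Gamma(G)$: for each of the three possible 3-cells, namely the tetrahedron $\Delta^3$, the prism $\Delta^2 \square \Delta^1$, and the cube $\Delta^1 \square \Delta^1 \square \Delta^1$, a finite check against the at most five vertices and the edges of $G$ shows the required 1-skeleton cannot be realized consistently. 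Consequently $C_n(\Gamma(G)) = 0$ for all $n \geq 3$, and $\beta_n(G) = 0$ for $n > 2$.

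It remains to compute $\beta_1$ and $\beta_2$. Since $G$ is connected, $\rank(\partial_1) = |V(G)| - 1 = 4$. I would then determine $\rank(\partial_2)$ using \Cref{boundary}, and exhibit an explicit 2-cycle $z = \sum_P \epsilon_P [P]$, where the sum runs over all 2-cells of $\Gamma(G)$ with signs $\epsilon_P \in \{\pm 1\}$ chosen so that $\partial_2 z = 0$. Rank-nullity applied to $C_1$ and $C_2$ then gives $\beta_1(G) = \dim\ker\partial_1 - \rank\partial_2 = 0$ and $\beta_2(G) = \dim\ker\partial_2 = 1$.

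The main technical obstacle will be the sign bookkeeping for the boundary of each square against the neighboring triangles, since for a square $\Delta^1_1 \square \Delta^1_2$ the boundary splits as $\partial(\Delta^1_1)\square\Delta^1_2 - \Delta^1_1\square\partial(\Delta^1_2)$ by \Cref{boundary}, and these signs must cancel against the signs of adjacent triangle boundaries in the candidate cycle $z$. Rather than grind through this algebra, I would prefer the cleaner geometric route: verify that every edge of $G$ lies on the boundary of exactly two 2-cells of $\Gamma(G)$ and that the local orientations can be assembled coherently, thereby producing a PL-homeomorphism from the geometric realization of $\Gamma(G)$ onto $S^2$. Once the sphere structure is established, the claimed Betti numbers follow from the homology of $S^2$ and the agreement of cellular and singular homology for CW-complexes.
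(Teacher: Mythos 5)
Your proposal is correct and matches the paper's proof in essentials: both enumerate the chain groups (here six vertices, eight edges, four squares, and no triangles or 3-cells), and both conclude by observing that the four squares assemble into a complex homeomorphic to $S^2$, from which the Betti numbers follow. The only nit is your phrase ``at most five vertices'' — the graph in \Cref{fig:tennisballs}(a) has six vertices $v_0,\ldots,v_5$, though this does not affect the argument since a cube still cannot embed and the graph contains no transitive triangles, hence no simplicial 2- or 3-cells.
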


\begin{proof}
The chain groups in the prodsimplicial complex associated with the graph are as follows:
\begin{align*}
C_0 &= \langle [v_0], [v_1], [v_2], [v_3], [v_4], [v_5] \rangle\\
C_1 &= \langle [v_0,v_1],[v_0,v_2],[v_1,v_3],[v_1,v_4],[v_2,v_3],[v_2,v_4], [v_3, v_5], [v_4,v_5]\rangle\\
C_2 &= \langle v_0v_1v_3v_2, v_0v_1v_4v_2, v_1v_3v_5v_4, v_2v_3v_5v_4\rangle.
\end{align*}

The four squares are connected in such a way that they form a complex homeomorphic to a sphere, from which the result follows.
\end{proof}
\begin{Rmk}
It is possible to add edges connecting opposite vertices of a consistently directed square (dividing it into two simplicial digraphs) in either of the graphs depicted in    \Cref{fig:3squareslabeled} or \Cref{fig:tennisballs}. This yields graphs with complexes homeomorphic to a sphere, with different types of  nontrivial polyhedral 2-cycles. For example, the graph in \Cref{fig:tennisballs}(a) can be modified to obtain the graph in \Cref{fig:tennisballs}(b). 

\begin{figure}[h]
\begin{center}
\begin{tabular}[t]{lclc}
(a) & 
\adjustbox{valign=t}{\psscalebox{1.0 1.0} % Change this value to rescale the drawing.
{
\begin{pspicture}(0,-6.01)(3.5026953,1.13)
\definecolor{colour0}{rgb}{0.8,0.6,0.8}
\psdots[linecolor=black, dotsize=0.2](2.8,0.57)
\psdots[linecolor=black, dotsize=0.2](0.8,-1.43)
\psdots[linecolor=black, dotsize=0.2](2.8,-1.43)
\psdots[linecolor=black, dotsize=0.2](2.8,-3.43)
\psdots[linecolor=black, dotsize=0.2](0.8,-3.43)
\psdots[linecolor=black, dotsize=0.2](2.8,-5.43)
\psline[linecolor=black, linewidth=0.04, fillstyle=solid,fillcolor=colour0, opacity=0.4, arrowsize=0.05291667cm 2.0,arrowlength=1.4,arrowinset=0.0]{->}(2.8,0.57)(2.8,-1.43)
\psline[linecolor=black, linewidth=0.04, fillstyle=solid,fillcolor=colour0, opacity=0.4, arrowsize=0.05291667cm 2.0,arrowlength=1.4,arrowinset=0.0]{->}(2.8,0.57)(0.8,-1.43)
\psline[linecolor=black, linewidth=0.04, fillstyle=solid,fillcolor=colour0, opacity=0.4, arrowsize=0.05291667cm 2.0,arrowlength=1.4,arrowinset=0.0]{->}(0.8,-1.43)(2.8,-3.43)
\psline[linecolor=black, linewidth=0.04, fillstyle=solid,fillcolor=colour0, opacity=0.4, arrowsize=0.05291667cm 2.0,arrowlength=1.4,arrowinset=0.0]{->}(2.8,-1.43)(2.8,-3.43)
\psline[linecolor=black, linewidth=0.04, fillstyle=solid,fillcolor=colour0, opacity=0.4, arrowsize=0.05291667cm 2.0,arrowlength=1.4,arrowinset=0.0]{->}(2.8,-1.43)(0.8,-3.43)
\psline[linecolor=black, linewidth=0.04, fillstyle=solid,fillcolor=colour0, opacity=0.4, arrowsize=0.05291667cm 2.0,arrowlength=1.4,arrowinset=0.0]{->}(0.8,-1.43)(0.8,-3.43)
\psline[linecolor=black, linewidth=0.04, fillstyle=solid,fillcolor=colour0, opacity=0.4, arrowsize=0.05291667cm 2.0,arrowlength=1.4,arrowinset=0.0]{->}(0.8,-3.43)(2.8,-5.43)
\psline[linecolor=black, linewidth=0.04, fillstyle=solid,fillcolor=colour0, opacity=0.4, arrowsize=0.05291667cm 2.0,arrowlength=1.4,arrowinset=0.0]{->}(2.8,-3.43)(2.8,-5.43)
\rput[bl](2.5,0.87){$v_0$}
\rput[bl](0.0,-1.33){$v_1$}
\rput[bl](2.9,-1.33){$v_2$}
\rput[bl](0.0,-3.43){$v_3$}
\rput[bl](2.9,-3.33){$v_4$}
\rput[bl](2.3,-5.93){$v_5$}
\end{pspicture}
}}& 
(b) &
\adjustbox{valign=t}{\psscalebox{1.0 1.0} % Change this value to rescale the drawing.
{
\begin{pspicture}(0,-6.01)(3.5026953,1.13)
\definecolor{colour0}{rgb}{0.0,0.0,1.0}
\psdots[linecolor=black, dotsize=0.2](2.8,0.57) %v_0
\psdots[linecolor=black, dotsize=0.2](0.8,-1.43) %v_1
\psdots[linecolor=black, dotsize=0.2](2.8,-1.43)
\psdots[linecolor=black, dotsize=0.2](2.8,-3.43)% v_2
\psdots[linecolor=black, dotsize=0.2](0.8,-3.43) %v_3
\psdots[linecolor=black, dotsize=0.2](2.8,-5.43)
\pspolygon[linecolor=black, linewidth=0.04, fillstyle=solid,fillcolor=colour0, opacity=0.3](2.8,0.57)(0.8,-1.43)(0.8,-3.43)(2.8,-1.43)
\psline[linecolor=black, linewidth=0.04, fillstyle=solid,fillcolor=colour0, opacity=0.4, arrowsize=0.05291667cm 2.0,arrowlength=1.4,arrowinset=0.0]{->}(2.8,0.57)(0.8,-3.43)
\psline[linecolor=black, linewidth=0.04, fillstyle=solid,fillcolor=colour0, opacity=0.4, arrowsize=0.05291667cm 2.0,arrowlength=1.4,arrowinset=0.0]{->}(2.8,0.57)(2.8,-1.43)
\psline[linecolor=black, linewidth=0.04, fillstyle=solid,fillcolor=colour0, opacity=0.4, arrowsize=0.05291667cm 2.0,arrowlength=1.4,arrowinset=0.0]{->}(2.8,0.57)(0.8,-1.43)
\psline[linecolor=black, linewidth=0.04, fillstyle=solid,fillcolor=colour0, opacity=0.4, arrowsize=0.05291667cm 2.0,arrowlength=1.4,arrowinset=0.0]{->}(0.8,-1.43)(2.8,-3.43)
\psline[linecolor=black, linewidth=0.04, fillstyle=solid,fillcolor=colour0, opacity=0.4, arrowsize=0.05291667cm 2.0,arrowlength=1.4,arrowinset=0.0]{->}(2.8,-1.43)(2.8,-3.43)
\psline[linecolor=black, linewidth=0.04, fillstyle=solid,fillcolor=colour0, opacity=0.4, arrowsize=0.05291667cm 2.0,arrowlength=1.4,arrowinset=0.0]{->}(2.8,-1.43)(0.8,-3.43)
\psline[linecolor=black, linewidth=0.04, fillstyle=solid,fillcolor=colour0, opacity=0.4, arrowsize=0.05291667cm 2.0,arrowlength=1.4,arrowinset=0.0]{->}(0.8,-1.43)(0.8,-3.43)
\psline[linecolor=black, linewidth=0.04, fillstyle=solid,fillcolor=colour0, opacity=0.4, arrowsize=0.05291667cm 2.0,arrowlength=1.4,arrowinset=0.0]{->}(0.8,-3.43)(2.8,-5.43)
\psline[linecolor=black, linewidth=0.04, fillstyle=solid,fillcolor=colour0, opacity=0.4, arrowsize=0.05291667cm 2.0,arrowlength=1.4,arrowinset=0.0]{->}(2.8,-3.43)(2.8,-5.43)
\rput[bl](2.5,0.87){$v_0$}
\rput[bl](0.0,-1.33){$v_1$}
\rput[bl](2.9,-1.33){$v_2$}
\rput[bl](0.0,-3.43){$v_3$}
\rput[bl](2.9,-3.33){$v_4$}
\rput[bl](2.3,-5.93){$v_5$}
\end{pspicture}
}}
\end{tabular}
\end{center}
\caption{Graphs with nontrivial 2-cycles.}
\label{fig:tennisballs}
\end{figure}

\end{Rmk}

\subsection{Realizability of Betti Number Combinations}
Using some of the graphs from previous results, we construct graphs with larger order having specific homology groups. To prove this result we use the following lemma that follows from the Mayer-Vietoris Sequence \cite{hatcher}.

\begin{Lem}
\label{lem:mayervietoris}
Let $G$ and $H$ be consistently directed graphs such that $G \cup H$ is consistently directed, $\Gamma(G\cup H) = \Gamma(G) \cup \Gamma(H)$,  $H_0(\Gamma(G\cap H)) = \Z$, and $H_n(\Gamma(G \cap H)) = 0$ for $n\geq 1$. Then \[H_n(\Gamma(G \cup H)) \cong H_n(\Gamma(G)) \oplus H_n(\Gamma(H))\] for $n\geq 1$. 
\end{Lem}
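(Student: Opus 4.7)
The plan is to derive the desired isomorphism directly from the Mayer--Vietoris long exact sequence applied to the pair of subcomplexes $\Gamma(G), \Gamma(H)$ of $\Gamma(G\cup H)$. The hypothesis $\Gamma(G\cup H)=\Gamma(G)\cup\Gamma(H)$ guarantees that these two subcomplexes cover the union, and since the boundary of any prodsimplicial cell of $\Gamma(G)\cap\Gamma(H)$ lies in both factors, the intersection is a genuine subcomplex; so Mayer--Vietoris for CW pairs is available.

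Concretely, I would first record the relevant portion of the long exact sequence
\[
\cdots \to H_n(\Gamma(G\cap H)) \to H_n(\Gamma(G))\oplus H_n(\Gamma(H)) \to H_n(\Gamma(G\cup H)) \to H_{n-1}(\Gamma(G\cap H))\to\cdots
\]
For $n\geq 2$, the hypothesis $H_n(\Gamma(G\cap H))=0=H_{n-1}(\Gamma(G\cap H))$ makes the outer groups vanish, so exactness immediately yields
\[
H_n(\Gamma(G))\oplus H_n(\Gamma(H)) \ \cong\ H_n(\Gamma(G\cup H)),
\]
which is the $n\geq 2$ case of the conclusion.

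The case $n=1$ is slightly more delicate because the connecting homomorphism lands in $H_0(\Gamma(G\cap H))=\Z$ rather than in $0$. Here I would use the tail of the sequence
\[
0 \to H_1(\Gamma(G))\oplus H_1(\Gamma(H)) \to H_1(\Gamma(G\cup H)) \xrightarrow{\delta} H_0(\Gamma(G\cap H)) \xrightarrow{\psi} H_0(\Gamma(G))\oplus H_0(\Gamma(H)),
\]
where the leading zero comes from $H_1(\Gamma(G\cap H))=0$. Since $G$, $H$, and $G\cup H$ are connected (they are consistently directed, hence weakly directed), each of $H_0(\Gamma(G))$, $H_0(\Gamma(H))$ is $\Z$, and the Mayer--Vietoris map $\psi$ sends a generator of $H_0(\Gamma(G\cap H))=\Z$ to $(1,-1)\in\Z\oplus\Z$, which is injective. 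Therefore $\delta=0$ by exactness, and the sequence reduces to the short exact sequence
\[
0 \to H_1(\Gamma(G))\oplus H_1(\Gamma(H)) \to H_1(\Gamma(G\cup H)) \to 0,
\]
giving the $n=1$ case.

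The only nontrivial step is the verification of the Mayer--Vietoris hypothesis for prodsimplicial complexes; once the hypothesis $\Gamma(G\cup H)=\Gamma(G)\cup\Gamma(H)$ is in place this follows from standard CW-pair Mayer--Vietoris, so the main conceptual point of the proof is the connectedness argument used to kill the connecting map in degree one.
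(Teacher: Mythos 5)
Your proof is correct and takes essentially the same approach the paper intends: the paper states this lemma without a written proof, attributing it directly to the Mayer--Vietoris sequence, and your argument simply carries out that standard computation (vanishing of the intersection's homology for $n\geq 2$, and injectivity of $H_0(\Gamma(G\cap H))\to H_0(\Gamma(G))\oplus H_0(\Gamma(H))$ via connectedness to kill the connecting map for $n=1$). The only detail worth flagging is the implicit identification $\Gamma(G)\cap\Gamma(H)=\Gamma(G\cap H)$, which the paper also leaves unstated.
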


As a direct result of this, we are able to ``glue'' together an arbitrary number of generator graphs to attain any combination of Betti numbers. 
\begin{Coro}
\label{lem:allb2}

There exists a graph $G$ such that $\beta_1(G) = 0$, $\beta_2(G) = k$ and $\beta_n(G) = 0$ for all $n > 2$.
\end{Coro}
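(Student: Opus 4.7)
The plan is to build $G$ by taking $k$ disjoint copies of the ``banana'' graph $B$ from \Cref{lem:banana} and gluing them sequentially along a single vertex. Concretely, let $B_1, \ldots, B_k$ be copies of $B$, each with source $s_i$ and target $t_i$, and construct $G$ inductively by identifying $t_i$ with $s_{i+1}$ for $i=1,\ldots,k-1$. I will denote by $G^{(j)} = B_1 \cup \cdots \cup B_j$ the partial union, so $G = G^{(k)}$.

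The first step is to verify that $G$ is consistently directed. Each $B_i$ is consistently directed by \Cref{lem:banana}, and since $B_i \cap B_{i+1}$ consists of the single vertex $t_i = s_{i+1}$, any directed path in $G$ that crosses from $B_i$ into $B_{i+1}$ must pass through $t_i$, and every edge within $B_{i+1}$ moves strictly away from $s_{i+1}$ toward $t_{i+1}$. Hence no directed cycle can form, and $G$ has the unique source $s_1$ and unique target $t_k$. Moreover, since the gluing happens at a single vertex, no new prodsimplicial cell is introduced across the gluing, so $\Gamma(G^{(j)}) = \Gamma(G^{(j-1)}) \cup \Gamma(B_j)$, and the intersection $\Gamma(G^{(j-1)}) \cap \Gamma(B_j)$ is precisely the single vertex $t_{j-1} = s_j$, which is contractible.

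Next, I apply \Cref{lem:mayervietoris} inductively. The base case $j=1$ gives $H_n(\Gamma(G^{(1)})) = H_n(\Gamma(B))$, so $\beta_1(G^{(1)}) = 0$, $\beta_2(G^{(1)}) = 1$, and $\beta_n(G^{(1)}) = 0$ for $n > 2$. For the inductive step, assuming the statement for $G^{(j-1)}$, the hypotheses of \Cref{lem:mayervietoris} are satisfied (the intersection is a single point, which has $H_0 = \Z$ and trivial higher homology), so
\begin{equation*}
H_n(\Gamma(G^{(j)})) \cong H_n(\Gamma(G^{(j-1)})) \oplus H_n(\Gamma(B_j))
\end{equation*}
for $n \geq 1$. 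Adding the Betti numbers gives $\beta_1(G^{(j)}) = 0$, $\beta_2(G^{(j)}) = j$, and $\beta_n(G^{(j)}) = 0$ for $n > 2$. Taking $j = k$ yields the desired graph.

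The only potentially delicate point is confirming that the single-vertex gluing does not accidentally create additional prodsimplicial cells; this is immediate because any such cell would require a subgraph spanning both $B_{i}$ and $B_{i+1}$, but the shared vertex alone cannot support any edge or higher cell straddling the two blocks. With that in hand, the Mayer--Vietoris step is routine, and the corollary follows.
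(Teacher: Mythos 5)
Your proof is correct and follows essentially the same route as the paper: both build $G$ by chaining $k$ copies of the graph of \Cref{lem:banana} and summing homology via \Cref{lem:mayervietoris} over contractible intersections. The only difference is cosmetic --- the paper's \Cref{fig:multinana} glues consecutive copies along a shared \emph{edge} (the source of each new copy is a middle vertex of the previous one), whereas you identify the target of one copy with the source of the next at a single vertex; either intersection is contractible, and your verification that no prodsimplicial cell can straddle the cut vertex is exactly the point that needs checking.
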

\begin{figure}[h]
\begin{center}
\psscalebox{1.0 1.0} % Change this value to rescale the drawing.
{
\begin{pspicture}(0,-6.51)(4.537031,3.43)
\psdots[linecolor=black, dotsize=0.2](3.3,2.87)
\psdots[linecolor=black, dotsize=0.2](3.3,1.37)
\psdots[linecolor=black, dotsize=0.2](2.1,1.37)
\psdots[linecolor=black, dotsize=0.2](0.9,1.37)
\psdots[linecolor=black, dotsize=0.2](0.9,-0.13)
\psdots[linecolor=black, dotsize=0.2](3.3,-0.13)
\psdots[linecolor=black, dotsize=0.2](2.1,-0.13)
\psdots[linecolor=black, dotsize=0.2](0.9,-1.63)
\psline[linecolor=black, linewidth=0.04, arrowsize=0.05291667cm 2.0,arrowlength=1.4,arrowinset=0.0]{->}(3.3,2.87)(3.3,1.37)
\psline[linecolor=black, linewidth=0.04, arrowsize=0.05291667cm 2.0,arrowlength=1.4,arrowinset=0.0]{->}(3.3,2.87)(2.1,1.37)
\psline[linecolor=black, linewidth=0.04, arrowsize=0.05291667cm 2.0,arrowlength=1.4,arrowinset=0.0]{->}(3.3,2.87)(0.9,1.37)
\psline[linecolor=black, linewidth=0.04, arrowsize=0.05291667cm 2.0,arrowlength=1.4,arrowinset=0.0]{->}(0.9,1.37)(0.9,-0.13)
\psline[linecolor=black, linewidth=0.04, arrowsize=0.05291667cm 2.0,arrowlength=1.4,arrowinset=0.0]{->}(2.1,1.37)(0.9,-0.13)
\psline[linecolor=black, linewidth=0.04, arrowsize=0.05291667cm 2.0,arrowlength=1.4,arrowinset=0.0]{->}(3.3,1.37)(0.9,-0.13)
\psline[linecolor=black, linewidth=0.04, arrowsize=0.05291667cm 2.0,arrowlength=1.4,arrowinset=0.0]{->}(3.3,1.37)(3.3,-0.13)
\psline[linecolor=black, linewidth=0.04, arrowsize=0.05291667cm 2.0,arrowlength=1.4,arrowinset=0.0]{->}(3.3,1.37)(2.1,-0.13)
\psline[linecolor=black, linewidth=0.04, arrowsize=0.05291667cm 2.0,arrowlength=1.4,arrowinset=0.0]{->}(0.9,-0.13)(0.9,-1.63)
\psline[linecolor=black, linewidth=0.04, arrowsize=0.05291667cm 2.0,arrowlength=1.4,arrowinset=0.0]{->}(2.1,-0.13)(0.9,-1.63)
\psline[linecolor=black, linewidth=0.04, arrowsize=0.05291667cm 2.0,arrowlength=1.4,arrowinset=0.0]{->}(3.3,-0.13)(0.9,-1.63)
\psdots[linecolor=black, dotsize=0.2](0.9,-4.33)
\psdots[linecolor=black, dotsize=0.2](3.3,-4.33)
\psdots[linecolor=black, dotsize=0.2](2.1,-4.33)
\psdots[linecolor=black, dotsize=0.2](0.9,-5.83)
\psline[linecolor=black, linewidth=0.04, arrowsize=0.05291667cm 2.0,arrowlength=1.4,arrowinset=0.0]{->}(3.3,-2.83)(0.9,-4.33)
\psline[linecolor=black, linewidth=0.04, arrowsize=0.05291667cm 2.0,arrowlength=1.4,arrowinset=0.0]{->}(3.3,-2.83)(3.3,-4.33)
\psline[linecolor=black, linewidth=0.04, arrowsize=0.05291667cm 2.0,arrowlength=1.4,arrowinset=0.0]{->}(3.3,-2.83)(2.1,-4.33)
\psline[linecolor=black, linewidth=0.04, arrowsize=0.05291667cm 2.0,arrowlength=1.4,arrowinset=0.0]{->}(0.9,-4.33)(0.9,-5.83)
\psline[linecolor=black, linewidth=0.04, arrowsize=0.05291667cm 2.0,arrowlength=1.4,arrowinset=0.0]{->}(2.1,-4.33)(0.9,-5.83)
\psline[linecolor=black, linewidth=0.04, arrowsize=0.05291667cm 2.0,arrowlength=1.4,arrowinset=0.0]{->}(3.3,-4.33)(0.9,-5.83)
\psdots[linecolor=black, dotsize=0.2](3.3,-2.83)
\rput[bl](3.0,3.17){$v_0$}
\rput[bl](0.0,1.27){$v_1$}
\rput[bl](2.4,1.27){$v_2$}
\rput[bl](3.6,1.27){$v_3$}
\rput[bl](0.0,-0.23){$v_4$}
\rput[bl](2.4,-0.23){$v_5$}
\rput[bl](3.6,-0.23){$v_6$}
\rput[bl](0.0,-1.73){$v_7$}
\rput[bl](0.0,-4.23){$v_{3k-2}$}
\rput[bl](3.3,-2.53){$v_{3(k-1)}$}
\rput[bl](2.3,-4.33){$v_{3k-1}$}
\rput[bl](3.6,-4.33){$v_{3k}$}
\rput[bl](0.6,-6.53){$v_{3k+1}$}
\rput[bl](2,-2.23){$\vdots$}
\end{pspicture}
}
\caption{Graph whose corresponding prodsimplicial complex has second homology of rank $k$.}
% second homology group isomorphic to $\Z^k$.}
\label{fig:multinana}
\end{center}
\end{figure}
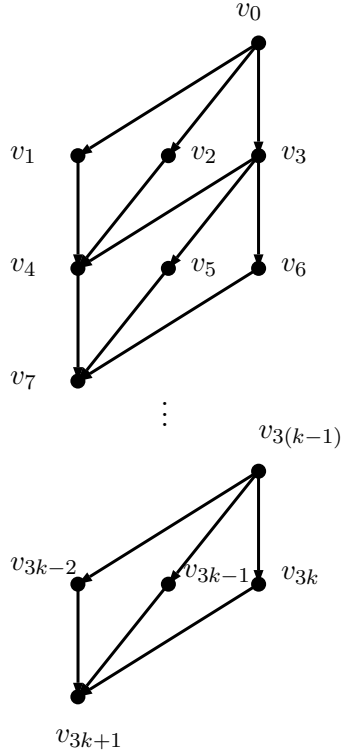

\begin{proof}
Let $G$ be as in \Cref{fig:multinana}. Then, as a topological space, $G$ is equivalent to a wedge of spheres that are glued along an edge (where homology groups are trivial). As a result, we have $\beta_1(G) = 0$, $\beta_2(G) = k$ and $\beta_n(G) = 0$ for all $n > 2$.
\end{proof}

\begin{Coro}
\label{coro:digraphs}
Let $G$ be as in \Cref{Lem:multiloops} with vertex set $\{v_0, v_1, \ldots, v_{2k+1}\}$ and $\beta_1(G) = k$, and let $H$ be as in \Cref{lem:allb2} with vertex set $\{u_0, u_1, \ldots, u_{3\ell+1}\}$ and $\beta_2(H) = \ell$ so that $v_0 = u_{3k+1}$ and $G$ and $H$ only intersect at the vertex $v_0$.

Let $G \cup H = (V(G) \cup V(H), E(G) \cup E(H))$. Then $G \cup H$ is consistently directed and satisfies
\begin{align*}
\beta_1(G \cup H) &= k\\
\beta_2(G \cup H) &= \ell\\
\beta_n(G \cup H) &= 0 \ \text{ for } n \geq 3.
\end{align*}
\end{Coro}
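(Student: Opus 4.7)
The plan is to apply the Mayer--Vietoris lemma (\Cref{lem:mayervietoris}) to the decomposition of $G\cup H$ into $G$ and $H$. For this, I must verify three things: (i) the union $G\cup H$ is consistently directed, (ii) $\Gamma(G\cup H) = \Gamma(G)\cup\Gamma(H)$, and (iii) the intersection satisfies $H_0(\Gamma(G\cap H))=\Z$ and $H_n(\Gamma(G\cap H))=0$ for $n\ge 1$.

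Items (i) and (iii) are straightforward. For (i), since $v_0$ is identified with the target of $H$ and is the source of $G$, it is neither a source nor a target of $G\cup H$, while the source of $H$ and the target of $G$ remain the unique source and target of $G\cup H$. A hypothetical directed cycle in $G\cup H$ would have to pass through $v_0$, exit along an out-edge of $G$, and then return to $v_0$ within $G$, contradicting the acyclicity of $G$. For (iii), the intersection $G\cap H$ is the single vertex $v_0$, so $\Gamma(G\cap H)$ is a point, which has the required homology.

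The main obstacle is (ii): showing that every prodsimplicial cell of $\Gamma(G\cup H)$ lies entirely in $\Gamma(G)$ or entirely in $\Gamma(H)$. My approach is to examine a cell $P=\Square_{i=1}^m\Delta^{n_i}$ and its embedding $\phi$. If no vertex of $P$ maps to $v_0$, then the 1-skeleton of $P$ embeds into $(G\cup H)\setminus\{v_0\}$, whose two connected components are $G\setminus\{v_0\}$ and $H\setminus\{v_0\}$; since the 1-skeleton of $P$ is connected, the image lies in exactly one component. If some vertex $u$ of $P$ maps to $v_0$, then since all out-neighbors of $v_0$ in $G\cup H$ lie in $V(G)$ and all in-neighbors lie in $V(H)$, the product structure of $P$ forces any ``diagonal'' vertex of $P$ to require an edge of $G\cup H$ joining $V(G)\setminus\{v_0\}$ to $V(H)\setminus\{v_0\}$, which does not exist. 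A short case analysis, according to whether $u$ is the source, the target, or an interior vertex of $P$, pins down the image of $P$ to lie in either $\Gamma(G)$ or $\Gamma(H)$.

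Once (i)--(iii) are established, \Cref{lem:mayervietoris} gives $H_n(\Gamma(G\cup H))\cong H_n(\Gamma(G))\oplus H_n(\Gamma(H))$ for $n\ge 1$. Substituting $\beta_1(G)=k$, $\beta_1(H)=0$, $\beta_2(G)=0$, $\beta_2(H)=\ell$, and $\beta_n(G)=\beta_n(H)=0$ for $n\ge 3$ produces the stated Betti numbers for $G\cup H$.
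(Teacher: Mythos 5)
Your proposal takes the same route as the paper: verify that $G\cup H$ is consistently directed and that the hypotheses of \Cref{lem:mayervietoris} hold, then read off the Betti numbers from the direct-sum decomposition. You are in fact more thorough than the paper, which dispatches the condition $\Gamma(G\cup H)=\Gamma(G)\cup\Gamma(H)$ without comment, whereas your case analysis on cells through the cut vertex $v_0$ (using that its out-neighbors lie in $G$ and its in-neighbors in $H$) correctly justifies it.
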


\begin{proof}
It can be readily checked that the unique source and target of $G \cup H$ are, respectively, $u_0$ and $v_{2k+1}$, so that $G \cup H$ is consistently directed. By assumption, $H_0(\Gamma(G\cap H)) = \Z$ and $H_n(\Gamma(G \cap H)) = 0$ for $n\geq 1$ so that the result follows from \Cref{lem:mayervietoris}.
\end{proof}
It is therefore possible to construct consistently directed graphs with first and second homology groups isomorphic to $\Z^k$ and $\Z^\ell$, respectively, for any positive integers $k$ and $\ell$. This construction is not minimal on the number of vertices. For example, the graph shown in \Cref{fig:lantern} follows a construction similar to that in \Cref{Lem:multiloops} and achieves many of the same $k$ values using fewer vertices and edges. 

\begin{figure}[h]
\centering\psscalebox{1.0 1.0} % Change this value to rescale the drawing.
{
\begin{pspicture}(0,-3.81)(6.592617,0.13)
\psdots[linecolor=black, dotsize=0.2](1.5,-0.43)
\psdots[linecolor=black, dotsize=0.2](0.6,-1.93)
\psdots[linecolor=black, dotsize=0.2](2.4,-1.93)
\psdots[linecolor=black, dotsize=0.2](1.5,-3.43)
\psdots[linecolor=black, dotsize=0.2](3.3,-1.93)
\psdots[linecolor=black, dotsize=0.2](5.4,-1.93)
\psline[linecolor=black, linewidth=0.04, arrowsize=0.05291667cm 2.0,arrowlength=1.4,arrowinset=0.0]{->}(1.5,-0.43)(2.4,-1.93)
\psline[linecolor=black, linewidth=0.04, arrowsize=0.05291667cm 2.0,arrowlength=1.4,arrowinset=0.0]{->}(1.5,-0.43)(0.6,-1.93)
\psline[linecolor=black, linewidth=0.04, arrowsize=0.05291667cm 2.0,arrowlength=1.4,arrowinset=0.0]{->}(0.6,-1.93)(1.5,-3.43)
\psline[linecolor=black, linewidth=0.04, arrowsize=0.05291667cm 2.0,arrowlength=1.4,arrowinset=0.0]{->}(2.4,-1.93)(1.5,-3.43)
\psline[linecolor=black, linewidth=0.04, arrowsize=0.05291667cm 2.0,arrowlength=1.4,arrowinset=0.0]{->}(1.5,-0.43)(3.3,-1.93)
\psline[linecolor=black, linewidth=0.04, arrowsize=0.05291667cm 2.0,arrowlength=1.4,arrowinset=0.0]{->}(3.3,-1.93)(1.5,-3.43)
\psline[linecolor=black, linewidth=0.04, arrowsize=0.05291667cm 2.0,arrowlength=1.4,arrowinset=0.0]{->}(1.5,-0.43)(5.4,-1.93)
\psline[linecolor=black, linewidth=0.04, arrowsize=0.05291667cm 2.0,arrowlength=1.4,arrowinset=0.0]{->}(5.4,-1.93)(1.5,-3.43)
\rput[bl](1.2,-0.13){$v_0$}
\rput[bl](1.5,-3.93){$v_{k+1}$}
\rput[bl](0.0,-1.93){$v_1$}
\rput[bl](1.7,-1.93){$v_2$}
\rput[bl](3.6,-1.93){$v_3$}
\rput[bl](5.6,-1.93){$v_k$}
\end{pspicture}
}
\caption{Directed graph with ${k-1 \choose 2}$ 2-cycles.}

\label{fig:lantern}
\end{figure}

\begin{Lem}
\label{lem:lantern}
Let $G$ be a digraph with vertices $V(G) = \{v_0, v_1, \ldots, v_{k+1}\}$ and edges defined by the collection of paths $(v_0,v_i,v_{k+1})$ for $0 < i < k+1$. Then $\beta_1(G) = 0$, $\beta_2(G) = {k-1 \choose 2}$, and $\beta_n(G) = 0$ for all $n > 2$.
\end{Lem}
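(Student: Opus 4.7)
My plan is to compute the prodsimplicial chain complex of $\Gamma(G)$ directly, exploiting the simple layered structure of $G$. First I would enumerate the cells: the vertex set has $k+2$ elements, and the edges are exactly $[v_0,v_i]$ and $[v_i,v_{k+1}]$ for $1\le i\le k$, giving $2k$ one-cells. Since no two middle vertices $v_i,v_j$ are adjacent, $G$ contains no copy of $\Delta^2$, ruling out triangular 2-cells as well as $\Delta^3$ and $\Delta^2\square\Delta^1$. Every prodsimplicial 2-cell is therefore a square $\Delta^1\square\Delta^1$, and each pair $1\le i<j\le k$ produces the consistently directed 4-cycle $v_0v_iv_{k+1}v_j$, yielding $\binom{k}{2}$ squares $P_{ij}$. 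Finally, the longest directed path in $G$ has length two, so $G$ admits no 3-cube $\Delta^1\square\Delta^1\square\Delta^1$; combined with the absence of triangles this rules out all cells of dimension at least three, so $\beta_n(G)=0$ for $n\ge 3$.

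To compute $\beta_1$, I would introduce the 1-chain $p_i=[v_0,v_i]+[v_i,v_{k+1}]$ representing the length-two path through $v_i$. Solving $\partial_1 c=0$ for a general 1-chain $c$ forces $c$ to be a linear combination of the $p_i$ whose coefficients sum to zero, so $Z_1$ is freely generated by $\{p_i-p_1 : 2\le i\le k\}$ and has rank $k-1$. Applying the product rule of Definition~\ref{boundary} to $P_{ij}$ then yields
\begin{equation*}
\partial_2 P_{ij}=[v_0,v_i]+[v_i,v_{k+1}]-[v_0,v_j]-[v_j,v_{k+1}]=p_i-p_j,
\end{equation*}
so $B_1$ is spanned by the same collection of differences $p_i-p_j$ and coincides with $Z_1$. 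Hence $\beta_1(G)=0$.

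For $\beta_2$, since there are no 3-cells we have $B_2=0$ and $H_2\cong Z_2=\ker\partial_2$. The image of $\partial_2$ is the span of $\{p_i-p_j\}$, which has rank $k-1$, so rank-nullity applied to $\partial_2:\Z^{\binom{k}{2}}\to C_1$ gives $\rank(Z_2)=\binom{k}{2}-(k-1)=\binom{k-1}{2}$, establishing $\beta_2(G)=\binom{k-1}{2}$. The only genuinely delicate step is correctly tracking signs in $\partial_2 P_{ij}$ via Definition~\ref{boundary}; once the clean formula $\partial_2 P_{ij}=p_i-p_j$ is in hand, the rest is elementary linear algebra. As a geometric sanity check, $\Gamma(G)$ is homeomorphic to the unreduced suspension of the complete graph $K_k$ on the middle vertices $\{v_1,\dots,v_k\}$ with apices $v_0$ and $v_{k+1}$ (each square $P_{ij}$ corresponds to the suspension of the edge $v_iv_j$); since $K_k$ is homotopy equivalent to $\bigvee^{\binom{k-1}{2}}S^1$, the suspension $\Sigma K_k$ is homotopy equivalent to $\bigvee^{\binom{k-1}{2}}S^2$, recovering all three Betti numbers at once.
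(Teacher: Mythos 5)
Your proof is correct. The paper states this lemma without any proof, so there is nothing to compare against directly; your argument fills that gap completely. The cell enumeration is right (no $\Delta^2$ since the only length-two directed paths are $v_0\to v_i\to v_{k+1}$ and $[v_0,v_{k+1}]\notin E$; exactly $\binom{k}{2}$ squares; no $3$-cells since any prodsimplicial $N$-cell with $N\geq 3$ either contains a triangle or a directed path of length three), the sign computation $\partial_2 P_{ij}=p_i-p_j$ agrees with Definition~\ref{boundary}, and the rank count $\binom{k}{2}-(k-1)=\binom{k-1}{2}$ is correct and consistent with the Euler characteristic. Your closing observation that $\Gamma(G)$ is the suspension of $K_k$ (with each suspension arc subdivided at $v_i$ and each square $P_{ij}$ realizing the suspension of the edge $v_iv_j$) is in fact the argument most in the spirit of the paper, which proves the neighboring results (Lemma~\ref{lem:banana}, Lemma~\ref{lem:tennis}, Corollary~\ref{lem:allb2}) by identifying the complex with a sphere or a wedge of spheres rather than by explicit chain computations; either of your two routes would serve as the missing proof.
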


\section{Word Graphs of Double Occurrence Words}
\label{chap3}

In this section we focus on specific biomolecular processes where consistently directed graphs appear and the prodsimplicial complexes can be applied. 

Massive rearrangement processes are observed during the development of somatic nuclei in certain species of ciliates such as \textit{Oxytricha trifallax's}. The recombination is guided by short DNA repeats flanking the DNA segments that are rearranged and guiding their order. These short repeats can be modeled  by a sequence of double occurrence words, words where each symbol appears twice \cite{landweberref,landweberref2,prestonref}. In particular, over 90\% of DNA rearrangement in these species can be described through an iterated process of deletion of repeat and return words \cite{patterns}. Repeat and return words are generalizations of square and palindromic factors in words and are of interest in language theory \cite{patterns, dowdist, ryanthesis}. In this section we describe graphs associated with double occurrence words that model these DNA rearrangement process
\cite{assgraphs2}.

\subsection{Double Occurrence Words}
We call an ordered, countable set of symbols $\Sigma$ an {\em alphabet}. A {\em word over $\Sigma$} is a finite sequence of the form $w=a_1a_2\ldots a_n$ where $a_i \in \Sigma$ whose {\em length}, denoted $|w|$, is  $n$. We denote with $\Sigma^*$ the set of all words over $\Sigma$, including the empty word, denoted by $\epsilon$. The set of all symbols comprising a word $w$ is denoted by $\Sigma[w]$. The {\em reverse} of a word $w = a_1 a_2 \ldots a_n$ is $w^R=a_n a_{n-1} \ldots a_2 a_1$. The word $v$ is a {\em factor} of the word $w$, denoted $v \subw w$, if there exist $w_1, w_2 \in \Sigma^\ast$ such that $w = w_1vw_2$. In this presentation we set $\Sigma \subseteq[n]$ for some $n\in \N$. For example $w = 122313$ is a word over $[3]=\{1,2,3\}$ of length $\abs{w} = 6$. The reverse of $w = 122313$ is the word $w^R = 313221$.

A word $w \in \Sigma^*$ is called a {\em double occurrence word} (DOW) if every symbol in $\Sigma$ appears in $w$ either zero or two times. We use $\Sigma_{DOW}$ to denote the set of all DOWs over $\Sigma$. Similarly, we call $w$ a {\em single occurrence word} (SOW) if each symbol in $\Sigma$ appears either once or not at all. The set of SOWs over $\Sigma$ is denoted by $\Sigma_{SOW}$. Since a DOW of length $n$ uses $n/2$ distinct symbols, we say that the {\em size} of a DOW $w$ is $|w|/2$. When we restrict $\Sigma_{DOW}$ to DOWs of size less than or equal to $n$, we denote the set by $\Sigma_{DOW}^{\leq n}$.

A word $w \in \Sigma^*$ is said to be in {\em ascending order} if $a_1 = \min(\Sigma[w])$ and the first appearance of each symbol is the immediate successor of the largest of all the preceding symbols. For example, the word $w_1=122313$ is a DOW in ascending order, while the DOW $w_2=133212$ is not. 

We say that $w_1$ and $w_2$ are {\em ascending order equivalent}, and write $w_1 \sim w_2$, if there exists a bijection on $\Sigma$ inducing a morphism $f$ on $\Sigma^\ast$ such that $f(w_1) = w_2$. Words $w_1=122313$ and $w_2=133212$ are equivalent via the bijective map given by: $ 1\mapsto 1,\; 3\mapsto 2,\;
2\mapsto 3. $ Since words in ascending order are unique, up to this equivalence we consider words in ascending order as representatives of the classes determined by the relation $\sim$. 

From now on, we consider only equivalence classes of DOWs and abuse the notation by writing words in place of their equivalence class where no confusion arises. The following definition can be found in \cite{digon}.

\begin{Def}[repeat word, return word]\label{def:Mw}
Let  $x,y,z \in \Sigma^*$ and $u \in (\Sigma \setminus \Sigma[w])_{SOW}$. We say that
\begin{itemize}
\item the word $uu$ is a {\em repeat word} in $w = xuyuz$ and the word $xyz$ is obtained from $w$ by a {\em repeat deletion} denoted $d_u(w) = xyz$. In this case we call $u$ a {\em repeat factor} in $w$.
\item the word $uu^R$ is a {\em return word} in $ w = xuy u^Rz$ and the word $xyz$ is obtained from $w$ by a {\em return deletion}, also denoted $d_u(w) = xyz$. In this case we call $u$ a {\em return factor} in $w$.
\end{itemize}
Repeat or return words, $uu$ or $uu^R$, where $\abs{u}=1$ are called {\em trivial}. We say that a word $uu$ (resp. $uu^R$) is a {\em maximal} repeat (resp. return) word in $w$ if there are no other repeat (resp. return) factors $v$ in $w$ containing $u$ with $\abs{v}>\abs{u}$. Following \cite{ryan}, we use $\MDOW_w$ to denote the set of maximal repeat ($uu$) or return ($uu^R$) words in $w$. In addition, we define the set of repeat or return factors in $w$ as $\MSOW_w = \{u \subw w \ | \ uu \in \MDOW_w \text{ or } uu^R \in \MDOW_w\}$.
\end{Def}

\begin{Lem}\cite{ryan}
Let $w$ be a DOW of size $n$. For each $x\in\Sigma[w]$ there exists a unique $u\in \MSOW_w$ such that $x\in\Sigma[u]$.
\end{Lem}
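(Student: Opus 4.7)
The plan is to prove existence and uniqueness separately, exploiting the fact that any repeat (resp.\ return) factor of $w$ containing a symbol $x$ pairs the two positions $p_1<p_2$ of $x$ in $w$ via translation by $s=p_2-p_1$ (resp.\ reflection about the midpoint $m=(p_1+p_2)/2$). Both of these are rigid: they are forced by the positions of $x$ and do not depend on the factor itself.

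For existence, since $x$ occurs exactly twice in $w$, the singleton $u=x$ is always a trivial repeat factor, so the collection of repeat or return factors whose symbol set contains $x$ is nonempty. Choosing one of maximum length produces an element of $\MSOW_w$: any strictly longer repeat or return factor containing it as a subword would also contain $x$, contradicting the choice of maximum length.

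For uniqueness, suppose $u_1,u_2\in\MSOW_w$ both satisfy $x\in\Sigma[u_i]$, and let $I_1,I_2$ denote their leftmost intervals in $w$. Both $I_1$ and $I_2$ contain $p_1$, so $I_1\cup I_2$ is again an interval. In the repeat--repeat case both factors induce the same translation by $s$, so $w[i]=w[i+s]$ holds on all of $I_1\cup I_2$; since $w$ is a DOW this interval has length at most $s$ (otherwise $w[L]=w[L+s]=w[L+2s]$ would give some symbol three occurrences), and hence $I_1\cup I_2$ together with its shift by $s$ defines a bona fide repeat factor of $w$ containing both $u_1$ and $u_2$ as subwords. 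Maximality then forces $u_1=u_2$. The return--return case is analogous, with $I_1$ and $I_2$ both sitting strictly to the left of $m$ and reflecting to a disjoint interval, yielding a single return factor containing both $u_i$.

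The main obstacle is the mixed case, where $u_1$ is a repeat and $u_2$ a return factor. A translation by $s$ and a reflection about $m$ agree on integer positions at only the single point $p_1$, so $I_1\cap I_2=\{p_1\}$. If either factor has length one, the other (of length at least two) contains it as a subword, contradicting the maximality of the shorter factor. Otherwise $p_1$ is an endpoint of each of $I_1,I_2$ on opposite sides, so without loss of generality $u_1$ begins with $x$ and $u_2$ ends with $x$. Reading the letter at position $p_2+1$ from each viewpoint forces the second letter of $u_1$ (the repeat view, via the second occurrence of $u_1$ starting at $p_2$) to equal the second-to-last letter of $u_2$ (the return view, via $u_2^R$ starting at $p_2$); call this common letter $y$. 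Then $u_1$ locates the two occurrences of $y$ in $w$ at $\{p_1+1,\,p_2+1\}$, while $u_2$ locates them at $\{p_1-1,\,p_2+1\}$. The union $\{p_1-1,\,p_1+1,\,p_2+1\}$ consists of three distinct positions, contradicting that every symbol in a DOW appears exactly twice. This rules out the mixed case and completes the uniqueness argument.
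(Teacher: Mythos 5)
The paper does not actually prove this statement---it is quoted from the reference [ryan] with no argument given---so there is nothing internal to compare your proof against; I can only assess it on its own terms. Your overall strategy (rigidity of the translation by $s=p_2-p_1$ for repeat factors and of the reflection about $m=(p_1+p_2)/2$ for return factors, then merging two maximal factors through $x$ into a longer one) is sound, and the existence argument and the repeat--repeat and return--return cases are correct: the union of the two leftmost intervals carries a single-occurrence factor whose translate (resp.\ reflection) is disjoint from it, so it is again a repeat (resp.\ return) factor, and maximality forces $u_1=u_2$.

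There are two places in the mixed case that need repair. First, the inference ``the translation and the reflection agree only at $p_1$, so $I_1\cap I_2=\{p_1\}$'' is not purely geometric: for $t\in I_1\cap I_2$ you must note that $w[t]=w[t+s]=w[2m-t]$ and that these three positions are pairwise distinct unless $t=p_1$ (using $t<m$, which holds because the left occurrence of a return factor lies strictly left of $m$), so a second intersection point would give a symbol three occurrences. This is in the spirit of your framework but is a step, not a restatement. Second, and more substantively, the ``without loss of generality'' is not a genuine symmetry: repeat and return factors are not interchangeable under left--right reflection of the configuration, so the orientation with $u_1$ ending at $p_1$ and $u_2$ beginning at $p_1$ is a separate sub-case. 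There the analogous computation reads off position $p_2-1$ and forces a symbol $y$ to occur at $p_1-1$, $p_1+1$, and $p_2-1$; these three positions are pairwise distinct only after you rule out $p_2=p_1+2$, which does follow (the first occurrence $[p_1,p_1+b]$ of the return factor must lie strictly left of $m$, forcing $s\ge 2b+1\ge 3$), but this is an extra observation your write-up does not supply. Alternatively, one can pass to $w^R$ and observe that this orientation produces two factors whose leftmost occurrences share at least two positions, contradicting your own intersection claim. Either repair closes the argument; as written, one of the two orientations of the mixed case is unproved.
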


The set of maximal repeat or return words may include trivial repeat or return words, as illustrated in \Cref{ex:deletion}. In ascending order, some of the factors may be equivalent so elements of $\MDOW_w$ (resp. $\MSOW_w$) are always written as DOWs (resp. SOWs) and not their equivalence classes, so that every symbol in $w$ appears in some word in $\MSOW_w$. We are interested in maximal repeat and return words, and present the following definition, which was adapted from the so-called ``pattern reduction'' process described in  \cite{ryan} and \cite{dowdist}. 

\begin{Def}[successor, predecessor]
\begin{sloppypar}
The set $D(w) = \bigcup_{u \in \MSOW_w} \{v \ | \ v \text{ is in ascending order and }v \sim d_u(w)\}$ is called the {\em set of immediate successors of $w$}. If there exists a sequence of words $w=w_1, w_2, \ldots, w_n = w'$ such that $w_i\sim d_{u_{i}}(w_{i-1})$ for some choice of $u_i \in \MSOW_{w_i}$, we call $w'$ a {\em successor} of $w$ and $w$ a {\em predecessor} of $w'$. Note that the empty word $\epsilon$ is a successor of all words.
\end{sloppypar}
\end{Def}

\begin{Ex}\label{ex:deletion}
\begin{sloppypar}
Let $w= 1234523541$. The set of maximal repeat or return words in $w$ is $\MDOW_w = \{ 11,2323, 4554\}$ and $\MSOW_w = \{1, 23, 45\}$. Since $d_{1}(w) = 23452354\sim 12341243$ and $d_{23}(w) = 145541\sim 123321$, we have that the set of immediate successors of $w$ is $D(w) = \{12341243, 123321, 123231\}$. We may continue to delete subwords from the successors as follows. The maximal repeat or return factors of $12341243, 123321,$ and $123231$ are
\end{sloppypar}
\begin{align*}
\MSOW_{12341243} &= \{12, 34\}\\
\MSOW_{123321} &= \{123\}\\
\MSOW_{123231} &= \{1, 23\}
\end{align*}
so that their deletions yield

\begin{minipage}{\linewidth}
\begin{minipage}[t]{0.3\linewidth}
\begin{align*}
d_{12}(12341243) &=3443\sim 1221\\
d_{34}(12341243) &=1212\\
D(12341243) &= \{1221, 1212\}
\end{align*}
\end{minipage}
\begin{minipage}[t]{0.3\linewidth}
\begin{align*}
d_{123}(123321) &= \epsilon\\
D(123321) &= \{\epsilon\}
\end{align*}
\end{minipage}
\begin{minipage}[t]{0.3\linewidth}
\begin{align*}
d_{1}(123231) & =2323\sim 1212\\
d_{23}(123231) &= 11\\
D(123231) &= \{1212,11\}.
\end{align*}
\end{minipage}
\end{minipage}

Repeating this process once more yields $D(1212) = D(1221) = D(11) = \{\epsilon\}$. We can therefore say that the set of all successors of $w$ is the set
\[\{12341243, 123321, 123231, 1221, 1212, 11, \epsilon\}.\]
\end{Ex}

\subsection{Word Graphs}
We refer the reader to \cite{diestel} for elementary definitions in graph theory and to \Cref{def:sctgt} for the definitions of a directed graph, source, and target. 

\begin{Def}[global word graph]
The {\em global word graph} $G_n=(V,E)$ {\em of double occurrence words of size $n$} is the graph defined by:

\begin{itemize}
\item $V(G_n)=\DOW^{\leq n}/_\sim$;
\item $E(G_n) = \bigcup_{w\in V} E_w$, where $E_w = \{[w,v
]\ | v \in D(w)\}$. 
\end{itemize}

For a vertex $w$ in $G_n$, we define the {\em word graph rooted at $w$}, denoted $G_w$, as the induced subgraph of the global word graph containing as vertices $w$ and all of its successors. 
\end{Def}
By construction $G_w$ does not contain any cycles, and has unique source $w$ and unique target $\epsilon$ hence word graphs are consistently directed.

The figures in this section were computer generated. Though we do not include the characters in our presentation, the labels on the vertices are separated by commas to improve readability. 
\begin{Ex}
The global word graph $G_2$ of size $2$ is shown in \Cref{fig:2graph}, with the word graph rooted at $1122$ highlighted in blue. The vertex set is $V(G_{2}) = \DOW^{\leq 2}/_\sim = \{\epsilon, 11, 1221, 1212, 1122\}$. 
The word graph rooted at $w=1234523541$ whose successors are computed in \Cref{ex:deletion} is depicted in \Cref{fig:1234523541}. 
\begin{figure}[h]
\centering\includegraphics[width=0.5\textwidth]{}
\caption{Global word graph of size $2$.}
\label{fig:2graph}
\end{figure}

\begin{figure}[h]
\centering\includegraphics[width=0.5\textwidth]{}
\caption{Word graph rooted at $1234523541$.}
\label{fig:1234523541}
\end{figure}
\end{Ex}

\subsection{DOW Operations and Their Effect on Word Graphs}
\subsubsection{Operations that Result in Isomorphic Word Graphs}
\label{noeffect} 
We present some results on the cases where insertions, substitutions, or reversal do not affect the word graphs of DOWs. A prodsimplicial complex associated to $w\in \Sigma_{DOW}$ is the prodsimplicial complex associated with $G_w$. We consider operations that yield classes of DOWs whose complexes have similar topological properties. 

To find all predecessors of a given DOW $w$ we consider all possible insertions to $w$ up to ascending order equivalence. In \cite{digon} the insertions that yield equivalent DOWs and hence corresponding isomorphic word graphs were characterized. We present here results about DOWs that are not ascending order equivalent but yield isomorphic word graphs. 

\begin{Def}
Let $w, w' \in \DOW$ be such that $\Sigma[w] \cap \Sigma[w'] = \varnothing$. We define the {\em concatenation of $w$ and $w'$} as the DOW that is ascending order equivalent to $ww'$.
\end{Def}

\begin{Prop}
Let $w  \in \DOW$, and  $u\in \MSOW_w$. Let $v \in (\Sigma \setminus \Sigma[w])_{SOW}$ and let
\begin{align*}
w' &=xu_1v u_2y u_1 v u_2 z\\
w'' &=xu_1v u_2y u_2^R v^R u_1^R z
\end{align*}
where $u_1,u_2 \in \Sigma^*$ such that $u = u_1u_2$. Then $G_{w} \cong G_{w'}\cong G_{w''}$.
\end{Prop}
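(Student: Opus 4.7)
The plan is to proceed by strong induction on the size $n$ of $w$, treating the case where $u$ appears as a repeat factor in $w$ (so $w = xuyuz$); the return case $w = xuy u^R z$ is entirely parallel. Let $\phi$ denote the operation that inserts $v$ at the splitting point $u_1 \mid u_2$ in both occurrences of $u$, so that $\phi(w) = w'$. The inductive hypothesis will be that $G_{w^*} \cong G_{\phi(w^*)}$ for every DOW $w^*$ of size less than $n$ that contains $u$ as a maximal factor, and that an analogous statement holds for the return-type transformation producing $w''$.

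The first step is to establish a bijection between $\MSOW_w$ and $\MSOW_{w'}$ that sends $u \mapsto u_1 v u_2$ and fixes every other factor. Since $\Sigma[v] \cap \Sigma[w] = \varnothing$, the uniqueness lemma for maximal factors forces every letter of $v$ in $w'$ to lie in a single element of $\MSOW_{w'}$, and this element must be exactly $u_1 v u_2$, which is by construction a repeat word in $w'$. For any $u'' \in \MSOW_w \setminus \{u\}$, the symbols of $u''$ are disjoint from $\Sigma[u]$ by the uniqueness lemma, so both occurrences of $u''$ lie entirely within $x$, $y$, or $z$; hence $u''$ remains a repeat or return word in $w'$. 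Any attempted extension of $u''$ across the modified $u$-region would, after deleting the fresh symbols of $v$, descend to a strict extension of $u''$ or of $u$ in $w$, contradicting the maximality of one of them. This pins down $\MSOW_{w'}$ precisely.

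Next, I would verify that deletions commute with $\phi$. The top-level deletion gives $d_{u_1 v u_2}(w') = xyz \sim d_u(w)$, so these two successors coincide as equivalence classes. For $u'' \in \MSOW_w \setminus \{u\}$, deletion of $u''$ leaves the two copies of $u_1 v u_2$ in $w'$ untouched because $u''$ has symbols disjoint from those of $u$ and of $v$, giving $d_{u''}(w') = \phi(d_{u''}(w))$. Because $d_{u''}(w)$ has strictly smaller size and still contains $u$ as a maximal factor, the inductive hypothesis delivers $G_{d_{u''}(w)} \cong G_{d_{u''}(w')}$.

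Combining these ingredients produces a bijection on the out-neighbors of $w$ and $w'$ in their respective word graphs; extending inductively via the isomorphisms on successor word graphs and the equality of the top-level successors assembles a full isomorphism $G_w \cong G_{w'}$. The same argument applied to $w''$ — where $u_1 v u_2$ occurs instead as a return factor but $d_{u_1 v u_2}(w'') = xyz$ and the analogous commutation with the return-insertion holds — yields $G_w \cong G_{w''}$, and then $G_{w'} \cong G_{w''}$ by transitivity. The hard part will be the careful verification of the bijection $\MSOW_w \leftrightarrow \MSOW_{w'}$ (and the analogue for $w''$): I need to rule out simultaneously that the fresh symbols of $v$ create a new spurious maximal factor, that they break the maximality of some old $u''$, and that they allow $u$ to combine with a neighbor into a larger maximal factor. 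This relies essentially on the freshness $\Sigma[v] \cap \Sigma[w] = \varnothing$ together with the uniqueness lemma partitioning the alphabet symbols among elements of $\MSOW$.
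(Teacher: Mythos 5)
Your core observations are exactly the ones the paper's proof rests on: $\MSOW_{w'} = (\MSOW_w \setminus \{u\}) \cup \{u_1 v u_2\}$, $d_u(w) = d_{u_1vu_2}(w')$, and commutation of the other deletions with the insertion. The paper stops there and writes ``hence the word graphs are isomorphic''; your proposal makes the implicit recursion explicit via induction on size, which is the right instinct. However, the explicit version exposes a step that is false as stated.

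The gap is the claim that $d_{u''}(w)$ ``still contains $u$ as a maximal factor,'' which is what you invoke to apply the inductive hypothesis one level down. Maximal factors can merge under deletion. Concretely, take $w = 121323$, so $\MSOW_w = \{1,2,3\}$ and $u = 2$ is a (trivial) maximal repeat factor; deleting $u'' = 1$ gives $d_1(w) = 2323$, in which the unique maximal factor is $23$, so $2$ is no longer maximal and your inductive hypothesis does not apply to the pair $(2323,\ u=2)$. The commutation $d_{u''}(w') = \phi(d_{u''}(w))$ itself is fine (here $d_1(12413243) = 243243 = \phi(2323)$), but to continue the induction you must re-identify the insertion points as sitting inside the \emph{new, larger} maximal factor at corresponding positions, and then invoke the proposition for that factor with a different split ($23 = \tilde u_1\tilde u_2$ with $\tilde u_1 = 2$, $\tilde u_2 = 3$ in the example). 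This reformulation is not automatic: you need to prove that after any deletion the two insertion points still occupy corresponding positions within the two occurrences of whichever maximal factor absorbs them, and in the case where a repeat factor merges into a \emph{return} factor the second occurrence is read reversed, so ``corresponding'' must be interpreted with the orientation flip ($u_1 v u_2$ versus $u_2^R v^R u_1^R$) or the induction produces a word not covered by the statement. So the inductive hypothesis should be restated in terms of a pair of compatible insertion positions rather than the fixed factor $u$, together with a lemma that deletions preserve that compatibility; without this the induction does not close. (The paper's own two-line proof elides this point entirely, so filling it in is genuinely additional work, not a detail you overlooked relative to the source.)
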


\begin{proof} 
In this case $u_1v u_2u_1v u_2$ is a maximal repeat word in $w'$. Note that $\MSOW_{w'} =( \MSOW_w \setminus \{u\} ) \cup \{u_1 v u_2\}$ but $d_{u}(w) = d_{u_1v u_2}(w')$. Hence, the word graphs of $w$ and $w'$ are isomorphic: $G_{w'} \cong G_w$. The case when $uu^R$ is a maximal return word is similar. 
\end{proof}

We call $w \in \DOW$ a {\em palindrome} if $w^R \sim w$. For example, the DOW $w = 123231$ is a palindrome, as $w^R = 132321 \sim 123231$. For a palindrome $w$, both $w$ and $w^R$ are in the same ascending order equivalence class so that the corresponding word graphs are the same. As a result, the reversal operation has no effect on the prodsimplicial complex obtained. 

\begin{Lem}
\label{lem:reverse}
Let $w \in \DOW$. Then $G_w \cong G_{w^R}$.
\end{Lem}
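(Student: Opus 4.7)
The plan is to construct an explicit graph isomorphism $\phi\colon G_w \to G_{w^R}$ given by $\phi(v) = [v^R]$, where $[v^R]$ denotes the ascending order representative of $v^R$. The underlying principle is that reversal on raw DOWs intertwines with the deletion operation, and descends to a well-defined bijection on ascending-order equivalence classes because ascending-order equivalence is induced by a bijection on $\Sigma$, which commutes with reversal.

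First I would verify the key correspondence at the level of raw DOWs: reversal sends repeat/return factors to repeat/return factors. Concretely, if $w = xuyuz$ then $w^R = z^R u^R y^R u^R x^R$, so $u^R u^R$ is a repeat word in $w^R$; and if $w = xuyu^R z$ then $w^R = z^R u y^R u^R x^R$, so $uu^R$ is again a return word in $w^R$. In both cases a direct computation gives
\[
(d_u(w))^R \;=\; (xyz)^R \;=\; z^R y^R x^R \;=\; d_{u^R}(w^R) \ \text{(repeat case)} \quad \text{or}\quad d_u(w^R) \ \text{(return case)}.
\]
Maximality is preserved by the same argument applied in reverse: a strictly larger repeat/return factor in $w^R$ containing the image would reverse to a strictly larger one in $w$, contradicting maximality. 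Thus reversal induces a bijection $\MSOW_w \leftrightarrow \MSOW_{w^R}$ compatible with deletion.

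Second, I would show that $\phi$ is well-defined and bijective on vertex sets. If $v_1 \sim v_2$ via a symbol bijection $f$, then $f$ also sends $v_1^R$ to $v_2^R$, so $v_1^R \sim v_2^R$ and $[v_1^R] = [v_2^R]$. This makes $\phi$ well-defined on equivalence classes, and it is clearly an involution up to identification with its $G_{w^R} \to G_w$ counterpart, hence bijective. Note in particular that $\phi$ sends the root $[w]$ to $[w^R]$, which is the root of $G_{w^R}$. One also needs to check that the vertex sets match: $v$ is a successor of $w$ iff $[v^R]$ is a successor of $[w^R]$, which follows by induction on the length of a reduction sequence, using the deletion correspondence at each step.

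Finally, edge preservation is immediate from the deletion correspondence: an edge $[v,v'] \in E(G_w)$ means $v' \in D(v)$, i.e.\ $v' \sim d_u(v)$ for some $u \in \MSOW_v$; then $(v')^R \sim (d_u(v))^R$ equals $d_{u^R}(v^R)$ or $d_u(v^R)$, so $\phi(v') \in D(\phi(v))$ and $[\phi(v),\phi(v')] \in E(G_{w^R})$. The only real obstacle is the bookkeeping to ensure that passing from raw DOWs to ascending-order classes does not break any of these correspondences; once this is checked in one place, all subsequent steps are formal.
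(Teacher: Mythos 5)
Your proposal is correct and follows essentially the same route as the paper's proof: reversal sends repeat (resp.\ return) words of $w$ to repeat (resp.\ return) words of $w^R$ and commutes with deletion, so $v \mapsto v^R$ induces a bijection on vertices and edges and hence a graph isomorphism. You simply spell out in more detail the well-definedness on ascending-order classes and the preservation of maximality, which the paper leaves implicit.
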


\begin{proof}
Note that $u$ is a repeat (resp. return) word in $w$ if and only if $u^R$ is a repeat (resp. return) word in $w^R$. Then if $v$ is a successor of $u$, for each edge $[u,v] \in G_w$ we have a corresponding edge $[u^R,v^R]\in E(G_{w^R})$. This bijection between the edges induces an isomorphism between the graphs.
\end{proof}

\begin{Ex}
Let $w = 122133$. Then $w^R = 331221 \sim 112332$. The word graphs rooted at $122133$ and $112332$ are isomorphic, as shown in \Cref{fig:revgraph}.

\begin{figure}[H]
\begin{center}
\includegraphics[width=0.3\textwidth]{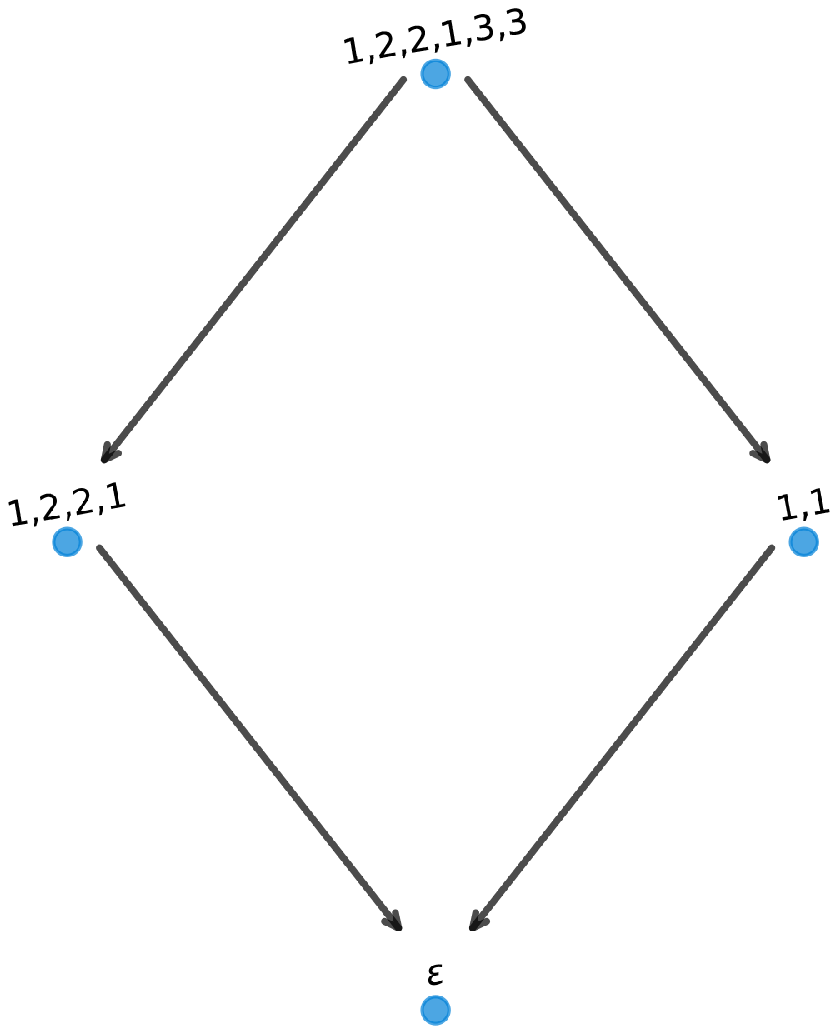}
\includegraphics[width=0.3\textwidth]{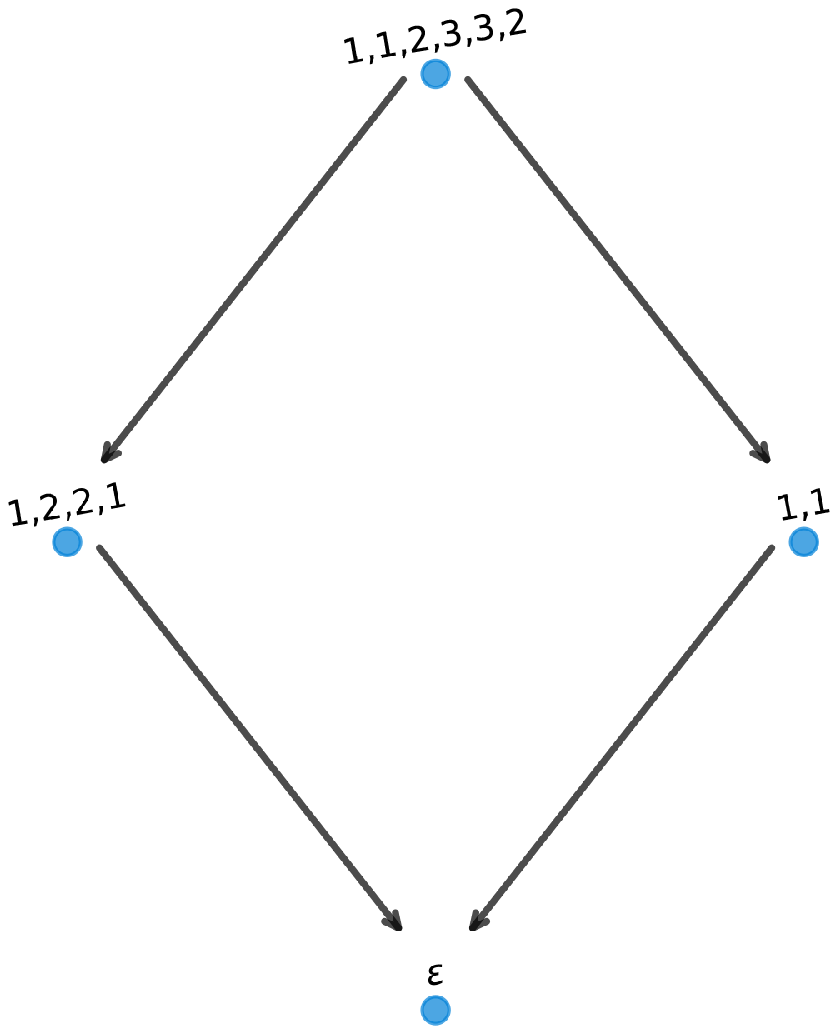}
\end{center}
\caption{The DOW $112332$ is ascending order equivalent to %the reverse of 
$122133^R$, so $G_{112332}$ is isomorphic to $G_{122133}$.}
\label{fig:revgraph}
\end{figure}
\end{Ex}

For the rest of this section, we consider the effect of substituting a repeat word $uu$ by the return word $uu^R$ on word graphs. In some cases we may substitute a maximal repeat word in a DOW $w$ for a maximal return word in $w$ without affecting the word graph. The following properties seem to play an important role in this context. 

\begin{Def}[square repeat or return words]
Let $w \in \DOW$ and let $u \in \MSOW_w$. We say that $u$ is a {\em square factor of $w$} if there exists $v \in \MSOW_w$ such that $v \neq u$ but $\abs{v}  = \abs{u}$.  A DOW $w$ is said to be {\em squarefree} if it has no square factors.
\end{Def}

Note that if $w$ is squarefree and $uu \in \MDOW_w$ then $vv, vv^R \not \in \MDOW_w$ for all $v\in \MSOW_w$ with $\abs{u}=\abs{v}$. Since no repeat or return factors appear more than once in $w$, we also have that all deletions $d_u(w)$ for $u \in \MSOW_w$ result in distinct DOWs. In particular, all subwords of $w$ are also squarefree.

\begin{Ex}[square factor, squarefree words]
\label{ex:squares}
Let $w = 12123434$. Then $\MDOW_w = \{12, 34\}$ and $12$ is a square repeat word in $w$. Let $x,y,z \in \SOW$ have distinct lengths. Then $w' = xyxzyz$ is squarefree. 
\end{Ex}

\begin{Def}[coprime]
Given two DOWs $w$ and $w'$ where $\Sigma[w] \cap \Sigma[w'] = \varnothing$, we say $w$ is {\em coprime to} $w'$ if all words of the form $uv$ where $u \in V(G_w)$ and $v \in V(G_{w'})$ are distinct in ascending order.
\end{Def}

By the definition for coprime words, words $w$ and $w'$ are coprime if for all $u,u' \in V(G_{w})$ and all $v,v' \in V(G_{w'})$, $u \neq u'$ and $v \neq v'$ implies $uv \not \sim u'v'$.

\begin{Ex}[coprime words]
\begin{sloppypar}
The word $w = 12234143$ has successors $\{1221, 112332, 123132, 11, \epsilon\}$, and the word $w' = 5678978956$ has successors $\{5656, 789789, \epsilon\}.$ It can be checked that all concatenations are distinct, therefore $w$ and $w'$ are coprime. 
\end{sloppypar}
\end{Ex}

\begin{Lem}
Let $w'$ be a successor of $w$. Then $G_{w'}$ is an induced subgraph of $G_w$.
\end{Lem}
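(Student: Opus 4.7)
The plan is to verify the claim at the level of vertex sets and then observe that since both $G_w$ and $G_{w'}$ are \emph{induced} subgraphs of the same global word graph, the inclusion of vertex sets automatically implies the statement. The only genuine content is therefore to show that $V(G_{w'}) \subseteq V(G_w)$.

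First, I would unwind the definitions. By construction, $V(G_w)$ consists of $w$ together with every successor of $w$, while $V(G_{w'})$ consists of $w'$ together with every successor of $w'$. By the definition of successor, being a successor is transitive: if $x$ is reachable from $y$ by a sequence of maximal repeat/return deletions and $y$ is reachable from $z$ by such a sequence, then the concatenation of the two sequences witnesses that $x$ is a successor of $z$. Since $w'$ is a successor of $w$, this transitivity immediately gives that every successor of $w'$ is a successor of $w$, and $w'$ itself lies in $V(G_w)$. Hence $V(G_{w'}) \subseteq V(G_w)$. Note also that because deletions can only shrink the word size, if $w \in \DOW^{\leq n}/_\sim$ then $w' \in \DOW^{\leq n}/_\sim$ as well, so both graphs sit inside the same ambient $G_n$.

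Second, both $G_w$ and $G_{w'}$ are defined as induced subgraphs of the global word graph $G_n$ on their respective vertex sets. For induced subgraphs of a common host, if $V(G_{w'}) \subseteq V(G_w) \subseteq V(G_n)$, then the subgraph of $G_w$ induced by $V(G_{w'})$ is the same as the subgraph of $G_n$ induced by $V(G_{w'})$, which is precisely $G_{w'}$ by definition. This gives the desired conclusion.

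I do not expect a real obstacle: the main thing to be careful about is that the induced structure automatically transports the edges correctly, so there is no need to re-check that $v \in D(u)$ in $G_w$ iff $v \in D(u)$ in $G_{w'}$ — the set $D(u)$ depends only on $u$, not on which graph it is being viewed inside. The only potential subtlety is to confirm that the ascending-order normalization is consistent across these graphs, but since all three graphs ($G_w$, $G_{w'}$, $G_n$) use the same representatives in $\DOW/_\sim$, this is automatic.
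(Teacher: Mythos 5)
Your argument is correct. The paper actually states this lemma without any proof, so there is nothing to compare against; your write-up supplies the evidently intended argument: successorhood is transitive (concatenate the deletion sequences), so $V(G_{w'})\subseteq V(G_w)$, and since both graphs are by definition induced subgraphs of the same global word graph $G_n$, the containment of vertex sets immediately yields that $G_{w'}=G_w[V(G_{w'})]$. Your side remarks (that deletions only decrease size, so everything stays inside $\DOW^{\leq n}/_\sim$, and that $D(u)$ depends only on $u$) correctly dispose of the only points where one could worry.
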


\begin{Lem}
If $w$ is coprime to $w'$, then $w'$ is coprime to $w$. 
\end{Lem}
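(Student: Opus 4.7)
The plan is to prove the contrapositive: starting from a failure of $w'$ being coprime to $w$, I will construct a failure of $w$ being coprime to $w'$. So I will suppose there exist $v, v' \in V(G_{w'})$ with $v \neq v'$ and $u, u' \in V(G_w)$ with $u \neq u'$ such that $vu \sim v'u'$. Because $\Sigma[w]$ and $\Sigma[w']$ are disjoint, the ascending representative of any such concatenation $xy$ (with $x$ on the alphabet of $w'$ and $y$ on the alphabet of $w$) is obtained by writing the ascending representative of $x$ followed by the ascending representative of $y$ with each letter shifted up by $|x|/2$. In particular, if $|v| = |v'|$, matching the two ascending representatives position by position would force $v = v'$ and then $u = u'$, so without loss of generality $|v| < |v'|$ and $|u| > |u'|$.

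The structural heart of the argument is to extract a common ``middle piece''. Matching the ascending representatives of $vu$ and $v'u'$, the ascending representative of $v$ becomes a strict prefix of that of $v'$; since $v'$ is a DOW and its alphabet is disjoint from that of $u'$, the suffix of $v'$ following $v$ must itself be a DOW on the letters of $v'$ not appearing in $v$. I will denote its equivalence class by $S$, so $v' \sim v \cdot S$, and comparing the right-hand ends of the two ascending representatives yields the mirror decomposition $u \sim S \cdot u'$ with the same equivalence class $S$. The shift bookkeeping forces these two occurrences of $S$ to coincide (as equivalence classes), and $S \neq \epsilon$ since $|v| < |v'|$.

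Next I will establish that $S \in V(G_w) \cap V(G_{w'})$. The key observation is that in any DOW $X = Y \cdot Z$ with $\Sigma[Y] \cap \Sigma[Z] = \varnothing$, no element of $\MSOW_X$ can straddle the $Y$/$Z$ boundary: alphabet disjointness would force two distinct copies of such a straddling factor to split into the same $Y$-prefix and $Z$-suffix and hence to begin at the same position, which is impossible. Consequently, $\MSOW_X$ partitions into factors lying entirely in $Y$ and factors lying entirely in $Z$, and iterating deletions on one side reduces $X$ to the other. Applied to $v' \sim v \cdot S$ this gives $S \in V(G_{v'})$, which is contained in $V(G_{w'})$ by the preceding lemma that $G_{v'}$ is an induced subgraph of $G_{w'}$; applied to $u \sim S \cdot u'$ it gives $S \in V(G_u) \subseteq V(G_w)$.

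To conclude, I will use that $\epsilon$ is a successor of every DOW and therefore $\epsilon \in V(G_w) \cap V(G_{w'})$. The pairs $(S, \epsilon)$ and $(\epsilon, S)$ then both lie in $V(G_w) \times V(G_{w'})$, and the concatenations $S \cdot \epsilon$ and $\epsilon \cdot S$ both represent the equivalence class of $S$; since $S \neq \epsilon$, both components of these two pairs differ, witnessing that $w$ is not coprime to $w'$, and completing the contrapositive. The step I expect to be most delicate is the careful bookkeeping with ascending-order representatives and shifts needed to produce the decompositions $v' \sim v \cdot S$ and $u \sim S \cdot u'$ and to verify that the same equivalence class $S$ appears on both sides; the remaining steps follow from the structural observation about $\MSOW$ of concatenated DOWs together with the already-established induced-subgraph lemma.
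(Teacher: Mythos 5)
Your proof is correct and follows essentially the same route as the paper's: assume the coprimality fails in one direction, extract the common nonempty middle DOW $S$ from the overlapping decompositions $v'\sim vS$ and $u\sim Su'$, show $S\in V(G_w)\cap V(G_{w'})$ by reducing away the alphabet-disjoint halves (using that no maximal repeat or return factor straddles the concatenation boundary), and conclude with the pairs $(S,\epsilon)$ and $(\epsilon,S)$. You spell out a couple of details the paper leaves implicit (the equal-length case and the justification for non-straddling), but the argument is the same.
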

\begin{proof}
Suppose $w'$ is not coprime to $w$ and let $u,u'\in V(G_w)$ (where $u\neq u'$) and $v,v' \in V(G_{w'})$ (where $v\neq v'$) be such that $uv = u'v'$. Note that since $\epsilon \in V(G_w) \cap V(G_{w'})$, if there exists a nonempty DOW $x \in V(G_w) \cap V(G_{w'})$ then $w$ cannot be coprime to $w'$, as $\epsilon x = x\epsilon = x$. Without loss of generality, let $\abs{u}> \abs{u'}$ Then $u' = ux$ for some DOW $x$ (since $u \in \DOW$) and $v = xv'$, so that $\MSOW_v = \MSOW_{v'} \cup \MSOW_x$. Moreover, for all $y \in \MSOW_{v'}$, if $v_1=d_y(v)$ then $v = xv_1' $ where $v_1' = d_y(v')$. Inductively, if $v$ reduces to $v_1', v_2', \ldots, \epsilon$ via the deletion of $y_1, y_2, \ldots, y_k$, then $v$ reduces to $x$ via the deletion of $y_1, y_2, \ldots, y_k$. That is, $x \in V(G_w)$. Similarly, if $u' = ux$ we have that $x\in V(G_{w'})$, so that $w'$ is not coprime to $w$.
\end{proof}

Given this symmetry, instead of saying that $w$ is coprime to $w'$, we say that $w$ and $w'$ are coprime. 

In the following examples, $w=xyz$ and $u$ are coprime words and the word $w'$ (resp. $w''$) resulting from the insertion of $uu$ (resp. $uu^R$) in $w$ is squarefree. In one instance, the substitution results in $G_{w'}\cong G_w$, while in another it does not. We conjecture that squarefree and coprime  are necessary conditions for invariance 
of word graphs under substitution of repeat and return words.

\begin{Ex}[substitution results in isomorphic word graphs]
Let $x = 12$, $y = 345$, $z = 54312$ and $u = 6789$. Then $w' = xuyuz \sim 12 3456 789 345 987612$ and $w'' = xuyu^Rz \sim 12 345 6789 543 9876 12$. Here, $uu$ and $xyz$ are coprime, and both insertions result in squarefree words, with $G_{w'} \cong G_{w''}$ as depicted in \Cref{fig:notmxl1}.
\begin{figure}[htpb]
\centering
\includegraphics[width=0.46\textwidth]{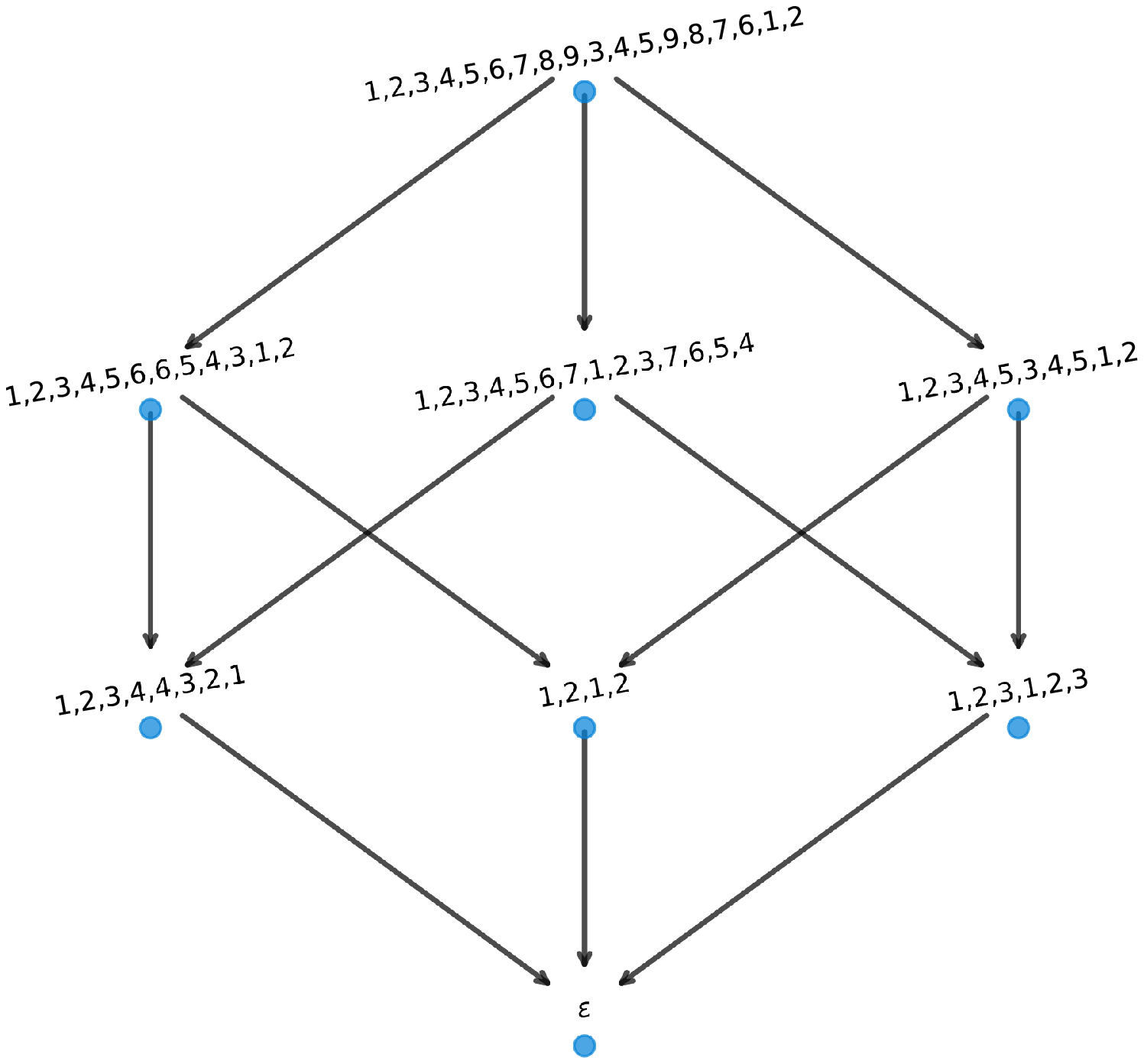}
\hspace{0.02\textwidth}
\includegraphics[width=0.46\textwidth]{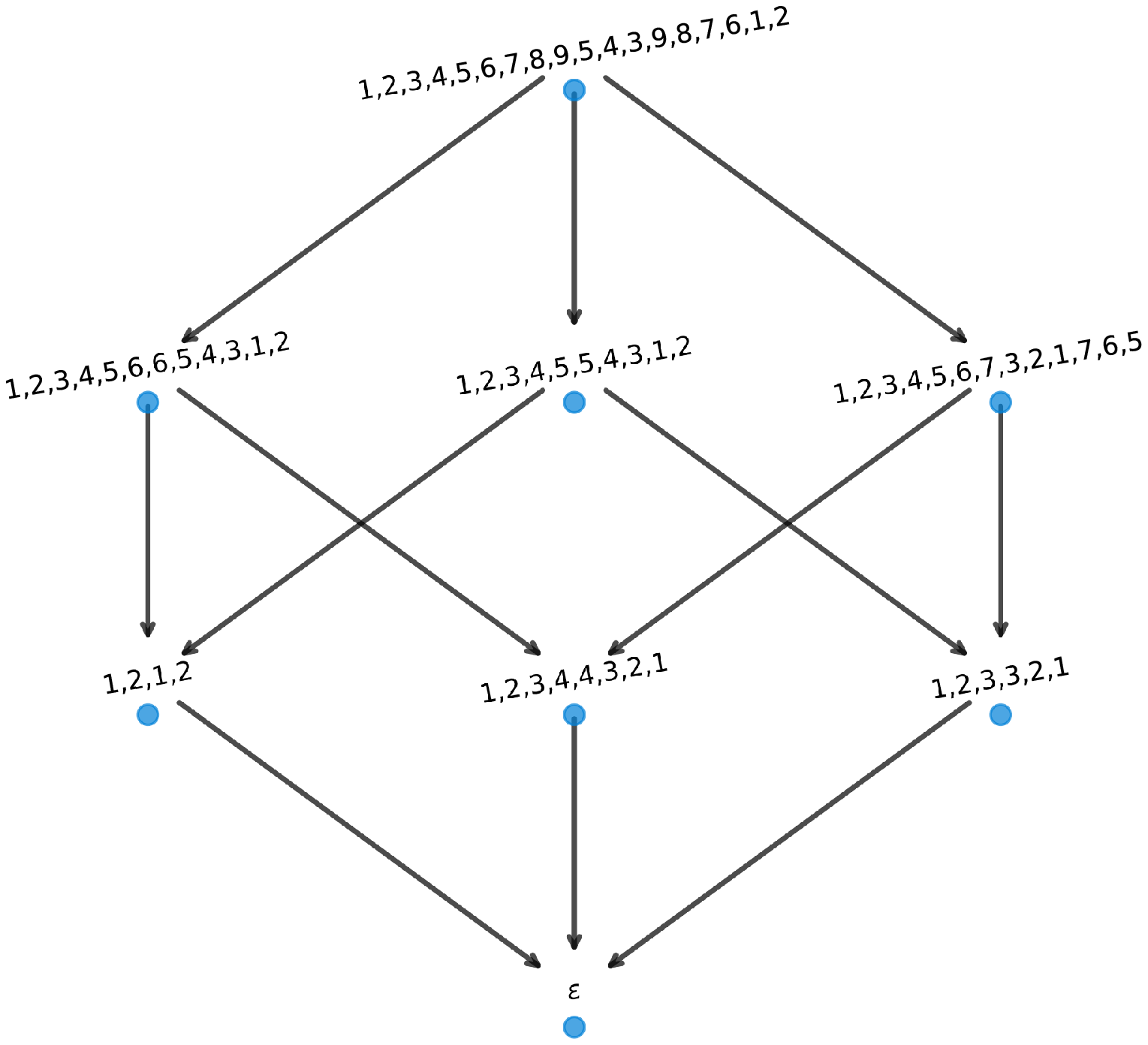}
\caption{The word graphs of $w'= xuyuz$ (left) and $w'' = xuyu^Rz$ (right) are isomorphic, as they both correspond to cubes.}
\label{fig:notmxl1}
\end{figure}
\end{Ex}

\begin{Ex}[substitution does not result in isomorphic word graphs]
Let $x = 1$, $y = 12345$, $z = 5432$ and $u = 67$. Then $w' = xuyuz \sim 1 23 1 4567 23 7654$ and $w'' = xuyu^Rz \sim 1 23 1 4567 32 7654$. Here we have that $uu$ is coprime to $xyz$ and the resulting insertions are squarefree but $G_{w'} \not \cong G_{w''}$ as depicted in in \Cref{fig:notmxml2}.

\begin{figure}[htpb]
\centering
\includegraphics[width=0.45\textwidth]{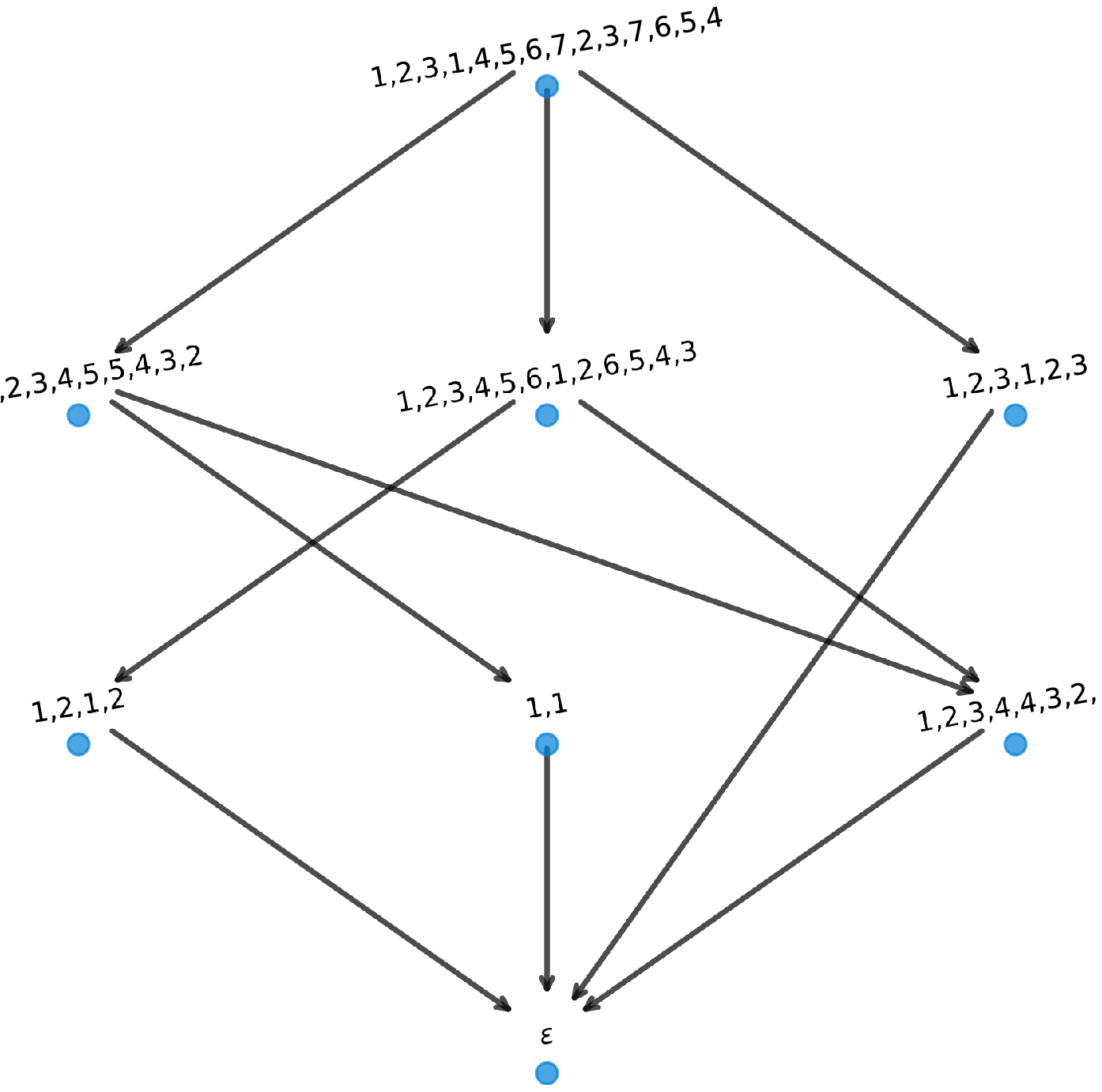}
\hspace{0.05\textwidth}
\includegraphics[width=0.45\textwidth]{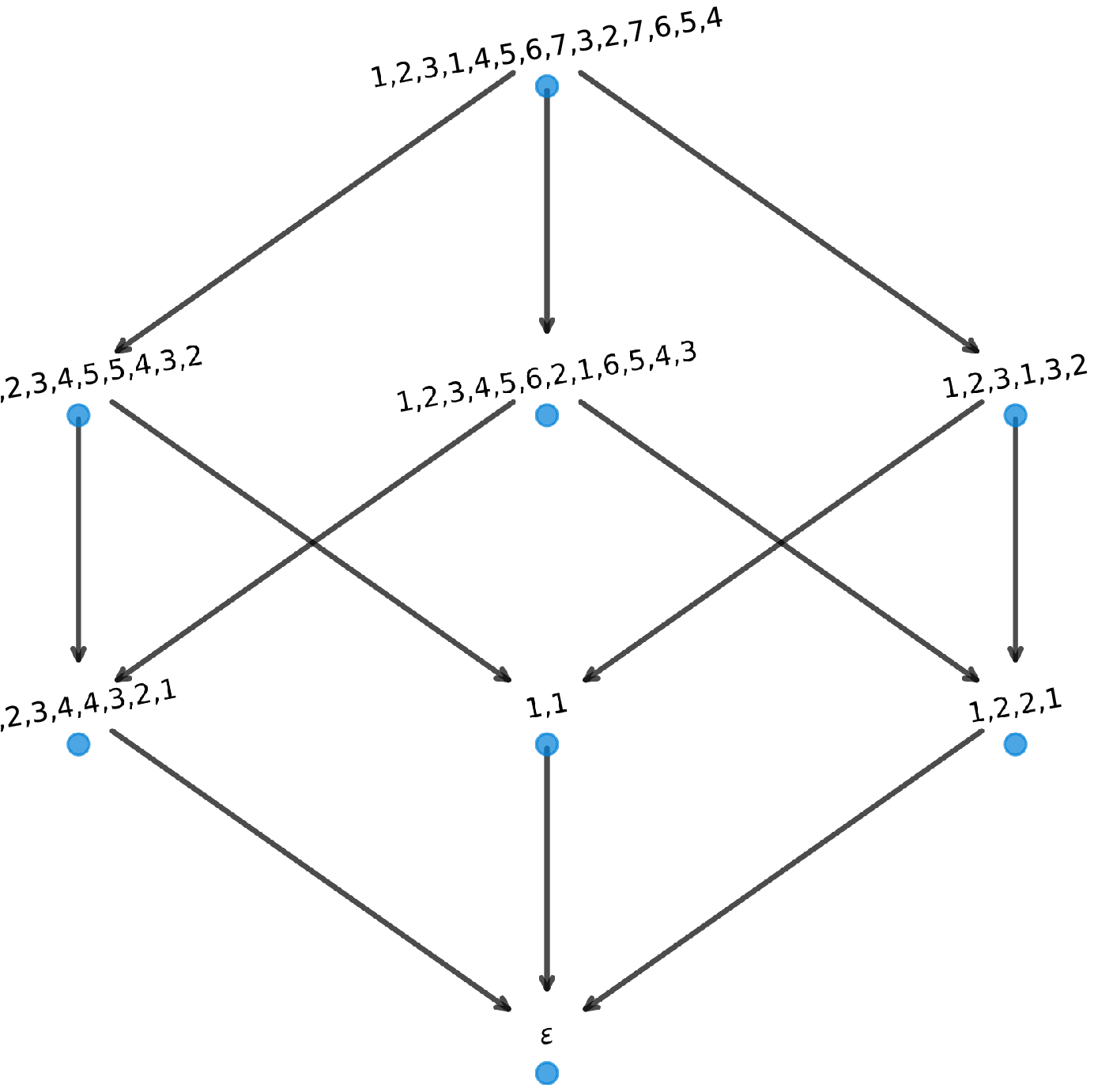}
\caption{The word graphs of $w' = xuyuz $ (left) and $w'' = xuyu^Rz$ (right) are not isomorphic.
}
\label{fig:notmxml2}
\end{figure}
\end{Ex}

\subsubsection{Doubling Effect}
Concatenation of a repeat (resp. return) word $uu$ (resp. $uu^R$) at the end of an existing word $w$ creates two subgraphs isomorphic to $G_w$ within $G_{wuu}$ (resp. $G_{wuu^R}$). These two subgraphs may not be disjoint unless we impose additional conditions, as discussed in the examples below.

\begin{Lem}
\label{lem:doubling}
Let $w\in \DOW$ and $uu \in (\Sigma \setminus \Sigma[w])_{DOW}$ be coprime. Then there exist two distinct (but possibly not disjoint) subgraphs $G_1$ and $G_2$ of $G_{wuu}$ isomorphic to $G_w$. Similarly, $G_{uuw}$ has two subgraphs isormophic to $G_w$. Moreover, we have  $G_{wuu} \cong G_{uuw}$.
\end{Lem}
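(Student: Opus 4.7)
The plan is to first analyze the structure of $V(G_{wuu})$ using the coprime hypothesis, then construct two explicit subgraphs of $G_{wuu}$ isomorphic to $G_w$, and finally define an edge-preserving bijection from $G_{wuu}$ to $G_{uuw}$.

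I would begin by showing that $\MSOW_{wuu} = \MSOW_w \cup \{u\}$. Since $\Sigma[u]\cap\Sigma[w] = \varnothing$, no maximal repeat or return factor of $w$ can be extended by symbols of $uu$, and symmetrically $u$ is itself a maximal repeat factor of $wuu$. Therefore deletion of $u$ commutes with any deletion of an element of $\MSOW_w$. Iterating this observation along reduction paths and invoking the coprime assumption at each stage, every successor of $wuu$ is of the form $w'uu$ or $w'$ for some $w' \in V(G_w)$, and all such concatenations are pairwise distinct in ascending order. Consequently,
\[
V(G_{wuu}) \;=\; \{w'uu \mid w' \in V(G_w)\} \,\cup\, V(G_w),
\]
the two sets being disjoint, and each edge of $G_{wuu}$ is either internal to one part (arising from a deletion of some $v \in \MSOW_{w'}$ and sending $w'uu \to d_v(w')uu$ or $w' \to d_v(w')$) or of the form $w'uu \to w'$ (arising from deleting $u$).

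Next, I would let $G_1$ and $G_2$ be the full subgraphs of $G_{wuu}$ induced on $\{w'uu \mid w' \in V(G_w)\}$ and on $V(G_w)$, respectively. The map $\phi_1 \colon V(G_w) \to V(G_1)$ defined by $w' \mapsto w'uu$ is a bijection by the coprime condition and preserves the edge structure by the previous paragraph, hence $G_1 \cong G_w$. The identity inclusion $V(G_w) \hookrightarrow V(G_{wuu})$ yields $G_2 \cong G_w$ directly, and the two subgraphs are distinct since their vertex sets differ in whether symbols of $u$ occur. This proves the first claim.

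For the isomorphism $G_{wuu} \cong G_{uuw}$, a symmetric analysis of $uuw$ gives $V(G_{uuw}) = \{uuw' \mid w' \in V(G_w)\} \cup V(G_w)$ with the analogous edge structure. I would define $\Phi \colon V(G_{wuu}) \to V(G_{uuw})$ by $\Phi(w'uu) = uuw'$ and $\Phi(w') = w'$ for $w' \in V(G_w)$. The main technical obstacle is verifying that $\Phi$ is well-defined on ascending-order equivalence classes and edge-preserving: the former is precisely the content of the coprime hypothesis (applied symmetrically on both sides) ensuring that distinct concatenations do not collapse under ascending-order relabeling, while the latter reduces to matching deletions governed by the common set $\MSOW_w \cup \{u\}$ across the two graphs, which follows from the commutativity of $u$-deletion with $\MSOW_w$-deletions on either side.
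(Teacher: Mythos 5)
Your proposal is correct and follows essentially the same route as the paper's proof: both identify the two copies of $G_w$ as the induced subgraphs on $V(G_w)$ and on $\{v\,uu \mid v\in V(G_w)\}$, and both verify the isomorphism via the correspondence $v\mapsto vuu$ acting on deletion sequences that avoid $u$. You are somewhat more complete than the paper in that you exhibit the full decomposition $V(G_{wuu})=\{v\,uu \mid v\in V(G_w)\}\cup V(G_w)$ together with its edge structure (in effect the product structure $G_w\square\Delta^1$ recorded later in \Cref{prop:concatprod}) and give an explicit edge-preserving bijection establishing $G_{wuu}\cong G_{uuw}$, a claim the paper disposes of with ``without loss of generality.''
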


\begin{proof}
Without loss of generality we prove the result for $G_{wuu}$ only. We claim that for each $v \in V(G_w)$, the word $vuu$ is ascending order equivalent to some $v' \in V(G_{wuu})$. Indeed, if $v$ is obtained from $w$ through iterated deletions so that 
\[v \sim d_{x_1}(d_{x_2}(d_{x_3}(\cdots d_{x_n}(w))\cdots))\]
where $x_i \not \sim u$ for all $1 \leq i \leq n$, then $v'\sim vuu$ is obtained from $wuu$ through
\[vuu \sim d_{x_1}(d_{x_2}(d_{x_3}(\cdots d_{x_n}(wuu))\cdots)).\]
Let $\iota: V(G_w) \rightarrow V(G_{wuu})$ be the inclusion of vertices and let $G_1$ be the graph induced by the image of $G_w$ in $G_{wuu}$. Let $G_2$ be the induced subgraph of $G_{wuu}$ generated by the set of vertices of the form $v' \sim vuu$ for $v \in V(G_w)$. Define $f: V(G_1) \rightarrow V(G_{2})$ where $v \mapsto v'\sim vuu$. Due to the above correspondence, $f$ is a bijection on the sets of vertices. Note that if $[v_1,v_2] \in G_w$ then $v_2 \sim d_x(v_1)$ for some $x \in \MSOW_{v_1}$. This holds if and only if $v_2uu \sim d_x(v_1uu)$ or $[v_1uu,v_2uu] \in G_{wuu}$ so that $f$ induces a bijection between edge sets that preserves vertex adjacency. It follows that $G_1 \cong G_2$.
\end{proof}

\begin{Ex}
The word graph corresponding to $12132344$ has two subgraphs isomorphic to the pentagon $G_{121323}$ in it as described in \Cref{lem:doubling}, which are not disjoint. This can be seen in \Cref{fig:12132344}, where the two subgraphs are highlighted in different colors.

\begin{figure}[h]
\begin{center}
\includegraphics[width=0.5\textwidth]{}
\end{center}
\caption{Word graph of $12132344$ with two highlighted subgraphs isomorphic to $G_{121323}$.}
\label{fig:12132344}
\end{figure}
\end{Ex}

\subsubsection{Product Effect}
In this subsubsection we study word graphs of concatenated words.

\begin{Thm}
\label{thm:squarefree}
Let $w, w' \in \DOW $ be coprime. Then we have $G_{ww'} \cong G_{w} \square G_{w'}$.
\end{Thm}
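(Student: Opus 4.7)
My plan is to construct the isomorphism explicitly using the concatenation map $\phi: V(G_w) \times V(G_{w'}) \to V(G_{ww'})$ sending $(u,v)$ to the ascending-order representative of $uv$, and then verify that $\phi$ is simultaneously a bijection on vertex sets and on edge sets. Injectivity of $\phi$ is immediate from the definition of coprime words, so the core of the argument splits into (i) a decomposition of the maximal repeat/return structure of $ww'$ and every one of its successors, and (ii) a careful tracking of how deletions propagate.

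The key structural lemma I would prove first is that $\MSOW_{ww'} = \MSOW_w \cup \MSOW_{w'}$, and more generally that for any successor $p = \overline{uv}$ with $u\in V(G_w)$ and $v\in V(G_{w'})$, one has $\MSOW_p = \MSOW_u \cup \MSOW_v$. The argument uses disjointness of the alphabets $\Sigma[w]$ and $\Sigma[w']$: suppose a repeat or return factor $x$ of $ww'$ contained symbols from both $\Sigma[w]$ and $\Sigma[w']$. Since $x$ is a single occurrence word and its copies in $ww'$ are contiguous, and since the two occurrences of any symbol in $\Sigma[w]$ lie entirely in the $w$-block (and similarly for $\Sigma[w']$), each occurrence of $x$ (or of $x^R$) would have to straddle the boundary between $w$ and $w'$ with a fixed split $x = x_w x_{w'}$. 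But both occurrences would then have to start at the same boundary-aligned position, which is impossible. Hence every repeat or return factor of $ww'$ uses symbols from only one side. Maximality in $ww'$ then coincides with maximality in $w$ or $w'$ respectively, giving the decomposition. The same argument, applied inductively, handles every intermediate successor $\overline{uv}$.

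With this decomposition in hand, the deletion $d_x(\overline{uv})$ equals $\overline{d_x(u)\, v}$ when $x \in \MSOW_u$ and equals $\overline{u\, d_x(v)}$ when $x \in \MSOW_v$. An easy induction on the length of a deletion sequence starting from $ww'$ shows that every successor of $ww'$ is of the form $\overline{uv}$ with $u\in V(G_w)$ and $v\in V(G_{w'})$, so $\phi$ is surjective; coprimality (together with its symmetry, proved earlier in the text) gives injectivity, so $\phi$ is a vertex bijection. For edges: an edge of $G_{ww'}$ out of $\overline{uv}$ is either of the form $[\overline{uv}, \overline{d_x(u)v}]$ with $x\in\MSOW_u$, or of the form $[\overline{uv},\overline{u\, d_x(v)}]$ with $x\in\MSOW_v$. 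These correspond respectively to edges of $G_w \square G_{w'}$ of the form $[(u,v),(d_x(u),v)]$ (coming from an edge of $G_w$) and $[(u,v),(u,d_x(v))]$ (coming from an edge of $G_{w'}$), precisely matching the definition of the Cartesian product. Conversely, every edge of $G_w \square G_{w'}$ arises this way, so $\phi$ induces a bijection on edges preserving the source/target.

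I expect the main obstacle to be the boundary-straddling argument that underlies $\MSOW_{ww'} = \MSOW_w \cup \MSOW_{w'}$, since it must be argued uniformly for both repeat and return factors and at every successor in the reduction tree, not just at $ww'$ itself. Once that structural fact is established, the rest is a bookkeeping exercise: well-definedness of $\phi$ up to ascending order equivalence is guaranteed by coprimality, and the compatibility of $\phi$ with deletions follows directly from the identity $d_x(\overline{uv}) = \overline{d_x(u)v}$ (or $\overline{u\, d_x(v)}$).
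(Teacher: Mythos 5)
Your proposal is correct and takes essentially the same route as the paper's proof: both define the concatenation map on vertices, get injectivity from coprimality, get surjectivity from the fact that successors of $ww'$ decompose as concatenations of successors of $w$ and $w'$, and match edges via the decomposition $\MDOW_{ww'} = \MDOW_w \sqcup \MDOW_{w'}$. Your boundary-straddling argument just supplies explicit detail for a step the paper asserts from the disjointness of the alphabets.
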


\begin{proof}
We begin by noting that from the definition of concatenation, every vertex in $G_{ww'}$ can be decomposed into $u_iv_j$ where $u_i \sim u \in V(G_w)$ and $v_j \sim v \in V(G_{w'})$. We define a map $f: V(G_{w}\square G_{w'}) \rightarrow V(G_{ww'})$ where $(u,v)\mapsto u_iv_j$.
By hypothesis, since $w$ and $w'$ are coprime, $u \neq u'$ and $v \neq v'$ implies $uv \not \sim u'v'$ for all $u,u' \in V(G_{w})$ and all $v,v' \in V(G_{w'})$. This makes $f$ one-to-one. Surjectivity follows from the fact that no successors of $ww'$ will arise other than by concatenation of successors of $w$ and successors of $w'$. It follows that $f$ is a bijection on the sets of vertices.

We abuse the notation and write the vertices of $G_{ww'}$ as $uv$ where $\Sigma[u] \cap \Sigma[v] = \varnothing$, with $u \sim u'\in V(G_w)$ and $v\sim v' \in V(G_{w'})$ following the decomposition described above. Let $[(u_i,v_j), (u_i',v_j') ]\in E(G_{w}\square G_{w'})$. Then either $u_i=u_i'$ and $v_j' \sim d_x(v_j)$ for some $x \in \MSOW_{v_j}$ or $v_j = v_j'$ and $u_i' \sim d_y(u_i)$ for some $y \in \MSOW_{u_i}$. Note that since $w$ and $w'$ share no symbols we may write $\MDOW_{ww'} = \MDOW_w \sqcup \MDOW_{w'}$. We observe that for $u_i v_j, u_i' v_j' \in V(G_{ww'})$ there exists an edge $[u_iv_j,u_i'v_j'] \in E(G_{ww'})$ if and only if $u_i = u_i'$ and $[v_j, v_j'] \in E(G_{w'})$ or $[u_i, u_i'] \in E(G_w)$ and $v_j = v_j'$. This implies that $[(u_i,v_j), (u_i',v_j') ]\in E(G_{w}\square G_{w'})$ if and only if $[f((u_i,v_j)),f((u_i',v_j'))]\in E(G_{ww'})$, which indicates that $G_{ww'} \cong G_w \square G_{w'}$ as desired.
\end{proof}
In the case where $w'$ is a repeat or return word, with word graph having two vertices connected by an edge, we have the following corollary.
\begin{Rmk}
\label{prop:concatprod}
Note that if $w\in \DOW$ and $u\in \SOW$ are such that $uu \not \sim v$ for all $v \in V(G_w)$, then $G_{wuu} \cong G_w \square \Delta^1$. Similarly, if $uu^R \not \sim v$ for all $v \in V(G_w)$ then $G_{wuu^R} \cong G_w \square \Delta^1$.
\end{Rmk}
\section{Betti Numbers and Generators for Word Graphs}
\label{dowbettis}
Having shown the result in general for arbitrary graphs, we now focus on consistently directed graphs corresponding to word graphs of DOWs to study the realizability of Betti numbers. Note that for DOWs $w$ and $w'$, if $w \in \MDOW_{w'}$ then $G_w\leq G_{w'}$. To find generators for the first and second homology groups among word graphs of DOWs we therefore turn to minimal size words whose corresponding graphs attain particular $\beta_1$ and $\beta_2$ values. 

A Python script (available at \href{https://github.com/fajardogomez/dowgraphs}{https://github.com/fajardogomez/dowgraphs}) was used to create the prodsimplicial complex associated with any directed graph $G$ and compute Betti numbers. 

\subsection{Betti Numbers of Word Graphs}
Let us consider the minimal size nontrivial 1-cycles among word graphs of DOWs. We claim that for the DOW $w$, $G_w$ satisfies $H_1(\Gamma(G_w)) \cong \Z$ and all other homology groups are trivial: 

$$1. \quad 121323, \hspace{15mm}
2. \quad 122331. $$

The corresponding word graphs are shown in \Cref{fig:pentagons}.

\begin{figure}[H]
\begin{center}
\includegraphics[width=0.3\textwidth]{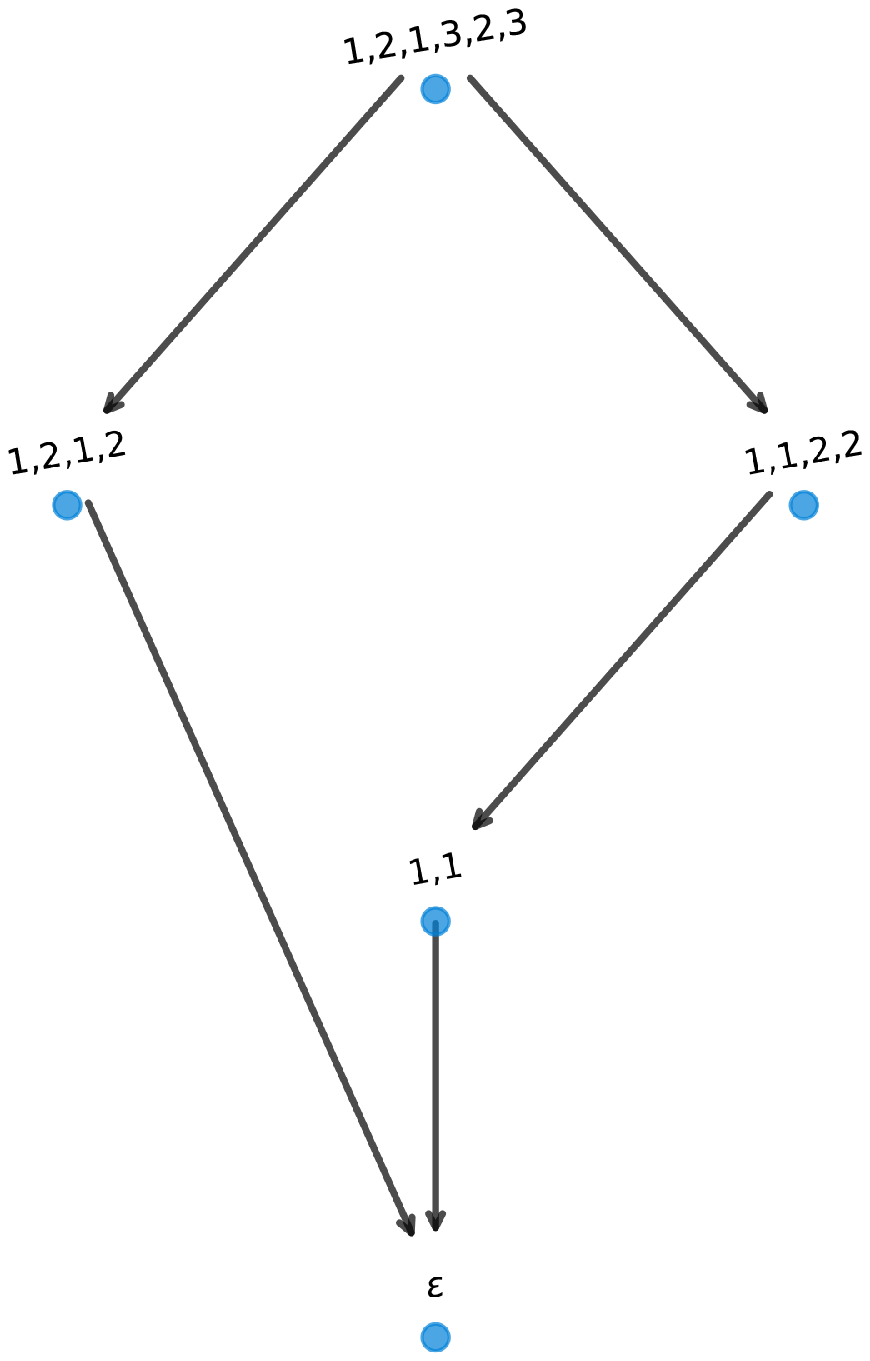}
\includegraphics[width=0.3\textwidth]{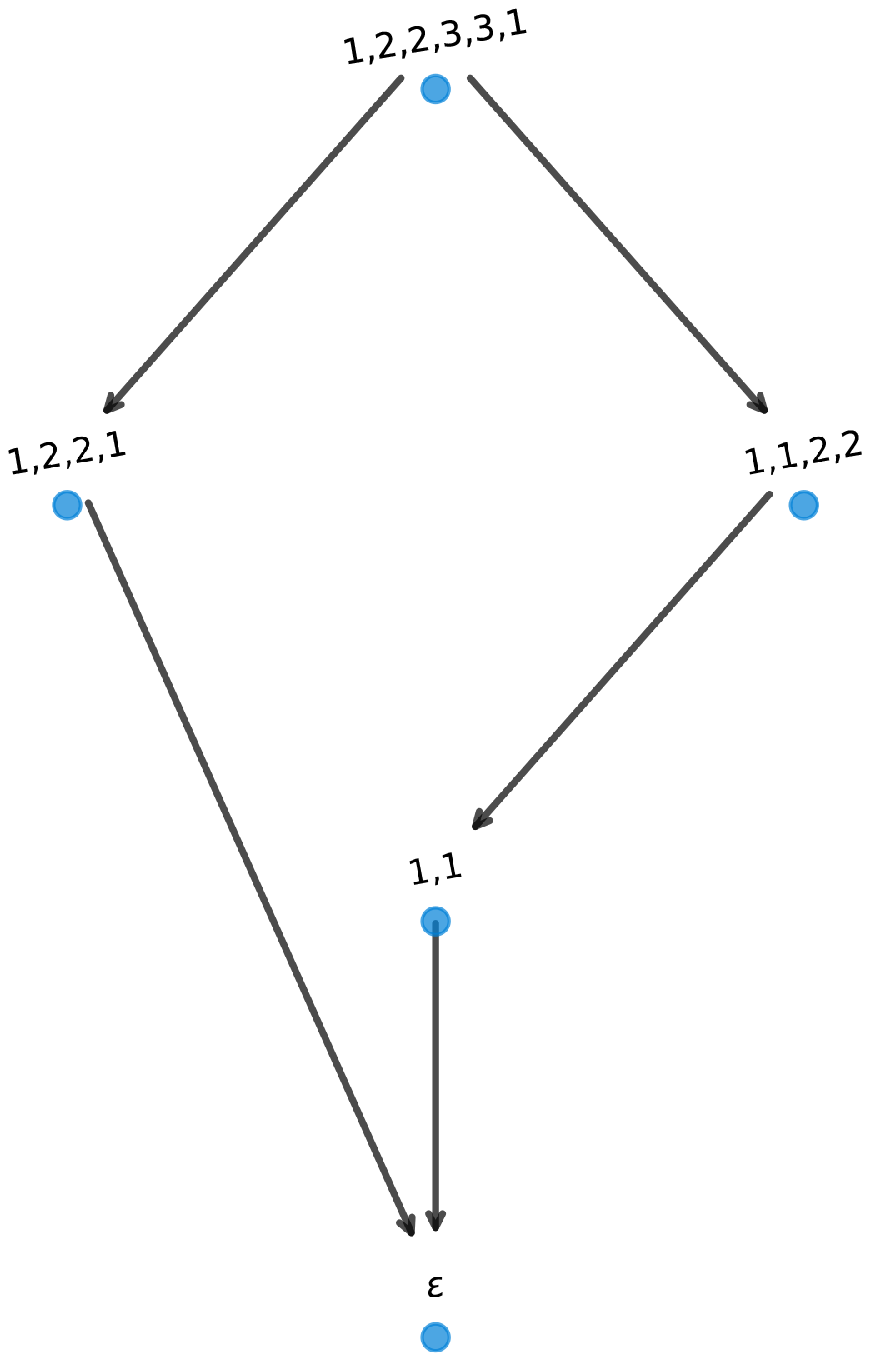}
\end{center}
\caption{The word graphs for DOWs $121323$ and $122331$ are isomorphic and contain nontrivial 1-cycles in their prodsimplicial complexes.}
\label{fig:pentagons}
\end{figure}

The result follows from \Cref{prop:cycles}, as the graphs consist of parallel paths from source to target of lengths 2 and 3, five edges in total, forming a pentagon. The DOWs on the least number of symbols that result in a generator for the first homology group are $121323$ and $122331$. 

Note that a $1$-cycle that is a square cannot be a generator of the first homology group because every square in a word graph (DOW graphs) consists of two paths of length 2, and therefore bounds a 2-cell. In all examples we have examined, $n$-gons representing nontrivial 1-cycles for $n\geq 6$ can be reduced to pentagons, so we conjecture that all nontrivial 1-cycles are homologous to pentagons. 
\begin{Ex}
Let $u_1$, $u_2$ and $\alpha$ be SOWs with $\Sigma[u_1] \cap \Sigma[u_2] \cap \Sigma[\alpha] = \varnothing$. The following word patterns result in pentagons and generalize the word graphs of $121323$ and $122331$:  $\alpha u_1u_1^R u_2 u_2^R\alpha^R$ and $u_1\alpha u_1 u_2\alpha u_2$ provided that $\abs{u_1} = \abs{u_2}$.
\end{Ex}

On the other side, all types of polyhedra that represent generators of nontrivial second homology discussed in \Cref{generalH2} can be realized in word graphs.

The complex corresponding to the word graph rooted at $12323414$ is homeomorphic to that of the graph in \Cref{lem:tennis}. This can be verified by referring to \Cref{fig:12234143}. Other word graphs have subgraphs isomorphic to those described in \Cref{lem:lantern} and \Cref{lem:banana}. For example, the word graph rooted at $12323144$ contains both. 

\begin{figure}[h]
\begin{center}
\includegraphics[width=0.3\textwidth]{}
\end{center}
\caption{Word graph rooted at $12323414$.}
\label{fig:12234143}
\end{figure}

\begin{center}
\begin{figure}[H]
\begin{tabular}[t]{lp{0.4\textwidth}lp{0.45\textwidth}}
(a) & \adjustbox{valign=t}{\includegraphics[width=0.45\textwidth]{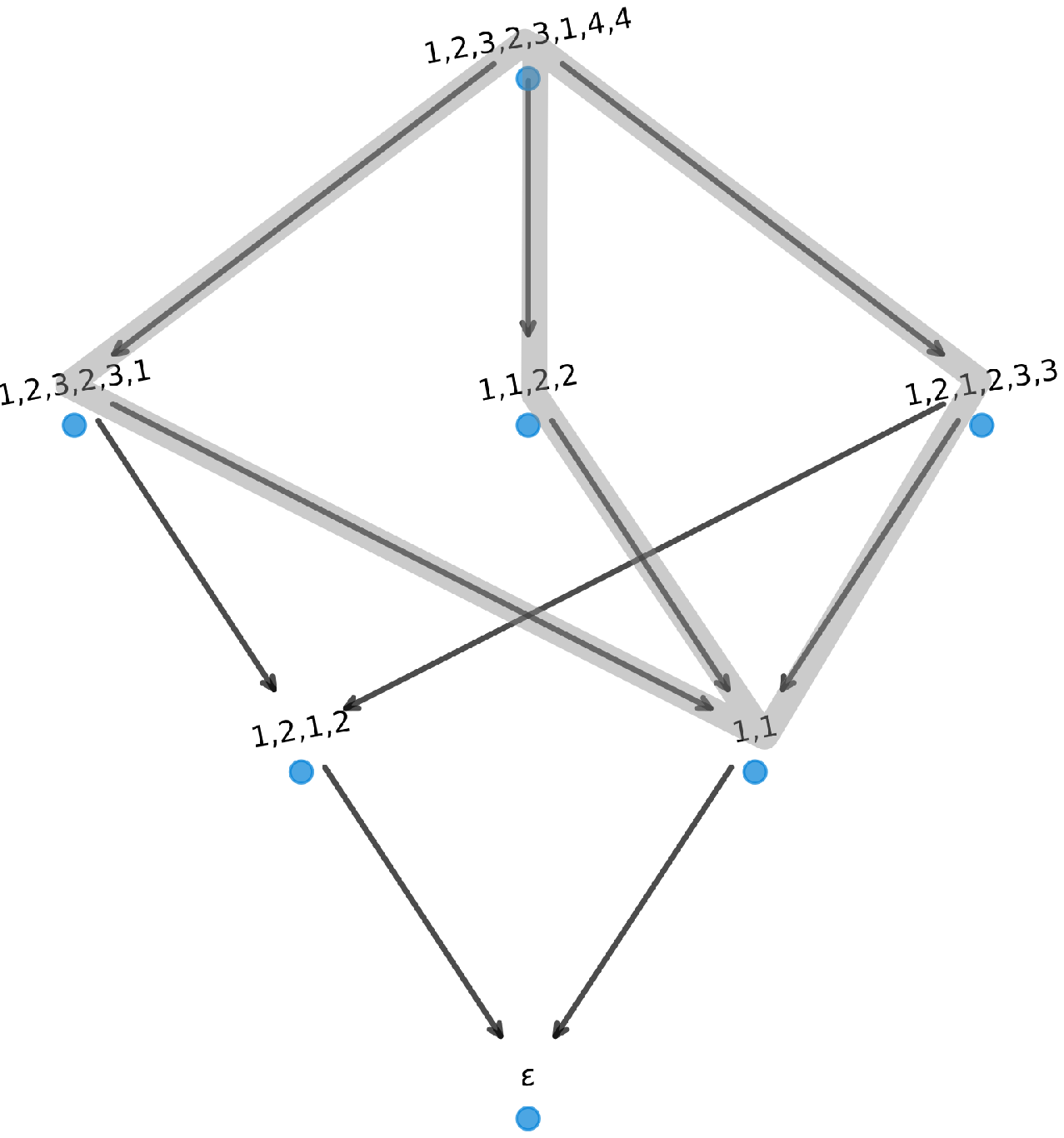}} &
(b) & \adjustbox{valign=t}{\includegraphics[width=0.4\textwidth]{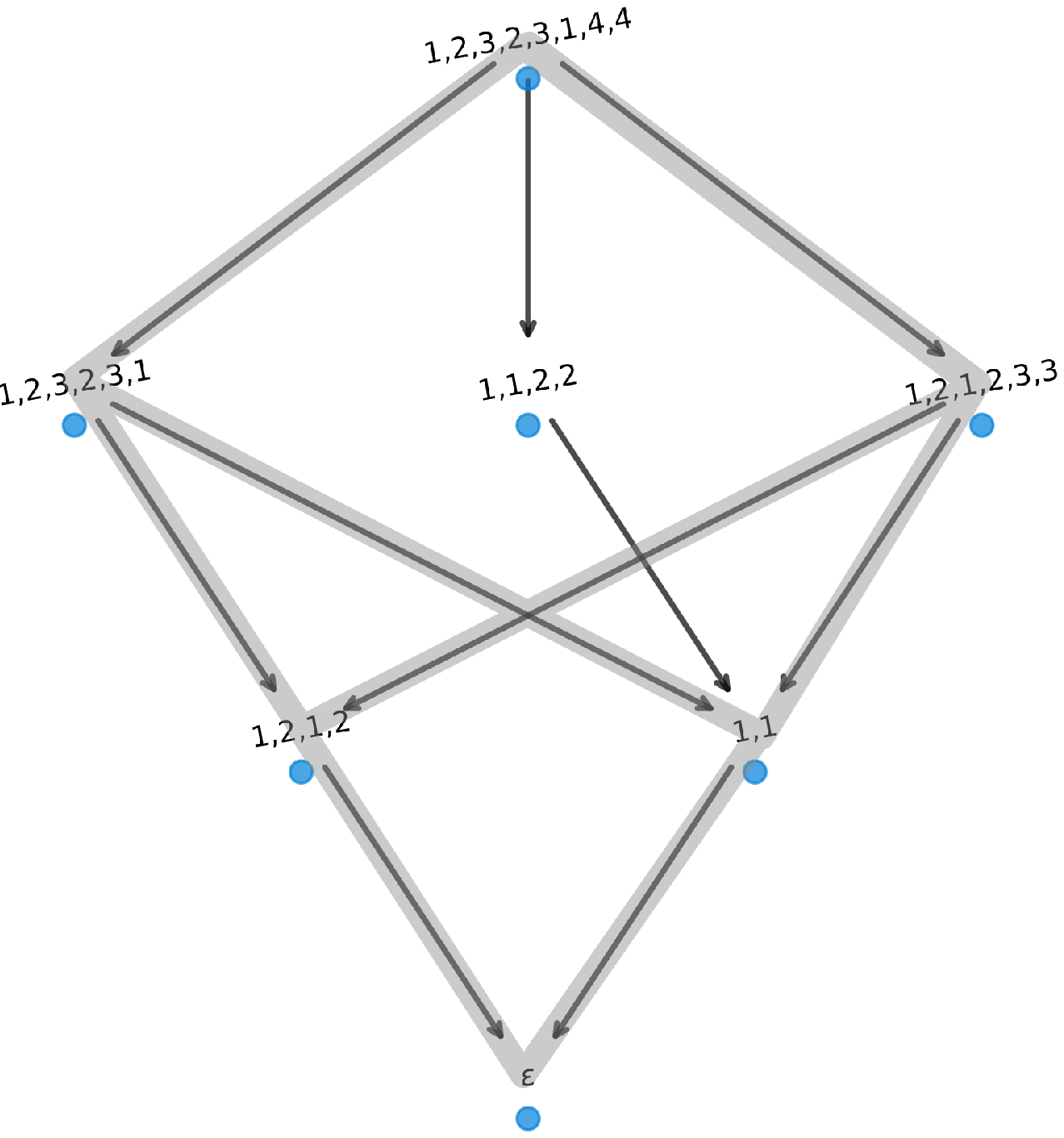}}
\end{tabular}
\caption{(a) Word graph rooted at 12323144 with a subgraph isomorphic to that in \Cref{fig:3squareslabeled} highlighted. (b) Word graph rooted at 12323144 with a subgraph isomorphic to that in \Cref{fig:tennisballs}(a) highlighted.}
\label{fig:12323144tennis}
\end{figure}
\end{center}

\subsection{Betti Numbers of Successors}
In general, if $u$ is a successor of $w$, it does not follow that $\beta_n(G_w) \geq \beta_n(G_u)$. Inspired by the work in \cite{assgraphs2}, we define the $n$-symbol {\em tangled cord} as the word
\[ t_n := 1213243\cdots (n-1)(n-2)n(n-1)n.\]

Note that $d_n(t_n)= t_{n-1}$ so that $t_{n-1} \in V(G_{t_n})$ for any $n>1$. For example, $t_6 = 121324354656$ has (the word corresponding to) the tangled cord of size five $t_5 = 1213243545$ as a successor. Based on results from the custom Python script, we have $\beta_1(G_{t_6}) = 1$ and $\beta_1(G_{t_5}) = 2$. This is the smallest pair of consecutive tangled cords where this inversion in Betti numbers is present. The next inversion occurs for $\beta_2$ when $n=11$, as shown in \Cref{tab:tangled}.

\begin{table}[h]
\centering
\begin{tabular}{lllll}
$n$ & $t_n$ & $\beta_1(G_{t_n})$ & $\beta_2(G_{t_n})$ & $\abs{V(G_{t_n})}$ \\ \hline
2 & 1,2,1,2 & 0 & 0 & 2 \\
3 & 1,2,1,3,2,3 & 1 & 0 & 5 \\
4 & 1,2,1,3,2,4,3,4 & 1 & 2 & 8 \\
5 & 1,2,1,3,2,4,3,5,4,5 & 2 & 6 & 13 \\
6 & 1,2,1,3,2,4,3,5,4,6,5,6 & 1 & 27 & 21 \\
7 & 1,2,1,3,2,4,3,5,4,6,5,7,6,7 & 1 & 54 & 34 \\
8 & 1,2,1,3,2,4,3,5,4,6,5,7,6,8,7,8 & 1 & 86 & 55 \\
9 & 1,2,1,3,2,4,3,5,4,6,5,7,6,8,7,9,8,9 & 1 & 111 & 89 \\
10 & 1,2,1,3,2,4,3,5,4,6,5,7,6,8,7,9,8,10,9,10 & 1 & 126 & 144 \\
11 & 1,2,1,3,2,4,3,5,4,6,5,7,6,8,7,9,8,10,9,11,10,11 & 1 & 116 & 233 \\
12 & 1,2,1,3,2,4,3,5,4,6,5,7,6,8,7,9,8,10,9,11,10,12,11,12 & 1 & 112 & 377
\end{tabular}
\caption{Tangled cords and invariants of their word graphs.}
\label{tab:tangled}
\end{table}

It is of interest to find  conditions for a successor $w'$ of $W$ such that $\beta_n(G_w) \geq \beta_n(G_{w'}) $ does not hold for some  $n \geq 1$.
\section{Concluding Remarks}
In this paper, a method to study topological properties of digraphs is proposed, by constructing a prodsimplicial complex for a given digraph. The construction provides an approach for TDA to be used on biological data that can be described with graphs. A specific such example, called the word graph, is presented that model assembly pathways in a ciliate species via DOWs \cite{dowdist}. The proposed construction of prodsimplicial complexes is applied to word graphs, and the Betti numbers are determined. The effect on word graphs under various operations, such as concatenation of words, are studied, and types of generators of homology were examined.

A number of problems remain unsolved. \Cref{prop:cycles} characterizes generators of the first homology groups for digraphs and \Cref{coro:digraphs} addresses the realizability of combinations of $\beta_1$ and $\beta_2$. However, we do not know all possible combinations of values of Betti numbers $\beta_1$ and $\beta_2$ over all word graphs. Though only a few specific examples of DOWs where the corresponding word graph has nontrivial 1-cycles and 2-cycles are included, more can be obtained through the DOW operations described in \Cref{noeffect}. For example, the concatenation of $121323$ with $4545$ corresponds to the Cartesian product of the word graphs (as shown in \Cref{prop:concatprod}) and therefore has no effect on Betti numbers so that $\beta_1(G_{1213234545}) = \beta_1(G_{121323}) = 1$. However, these methods would not create an exhaustive list of all graphs attaining particular given Betti numbers. It is desirable to characterize the effect of other word operations, such as insertions that disrupt maximal repeat or return words and thus may alter the homology groups of the complex on their word graphs.

We expect that the list of types of 1- and 2-cycles that represent nontrivial classes, such as the ones in \Cref{prop:cycles}, \Cref{lem:banana}, and \Cref{lem:tennis} may not be exhausted. We found pentagons as in \Cref{fig:pentagons} as nontrivial 1-cycles in word graphs. We do not know whether there are other polygons that represent nontrivial cycles. The \texttt{SageMath} \cite{sagemath} topology package has an option to find generators for the homology groups. However, the linear combinations in the output are not necessarily optimized to produce minimal generators. For instance, a generator for the first homology group in the word graph of $t_{10}$ is listed as a linear combination of nine edges. However, when these are drawn as an induced subgraph it becomes apparent that there exist edges between the vertices in the cycle so that it is homologous to a class represented by a pentagon.

It is also of interest to classify possible polyhedra that represent nontrivial second homology classes. 
We gave only partial answers in \Cref{digraphbettis}. Polyhedra such as ``pillows'' formed by two squares sharing all four boundary edges, are not allowed in the construction of the prodsimplicial complex. All word graphs computed here with nontrivial 1-cycles or 2-cycles in their corresponding prodsimplicial complex have subgraphs isomorphic to those in \Cref{badsquare,Lem:multiloops,prop:cycles,lem:banana,lem:lantern,lem:tennis}. The problem of finding an exhaustive list of generators using a minimal number of vertices and edges remains open. 

As a note on homology groups in general, \texttt{SageMath} outputs full groups instead of just Betti numbers as presented here. In an analysis of all DOWs of up to 7 symbols, no DOWs had a word graph whose corresponding prodsimplicial complex had torsion \cite{linathesis}. As the number of DOWs for each size increases superexponentially, homology groups were computed on longer DOWs for only samples of randomly generated DOWs and special word categories. As an interesting find, the homology groups of the tangled cord on ten symbols, $t_{10}$, had 2-torsion in the first (or second) homology group. Due to the large number of vertices and edges in the generator, however, it is not possible to find a type of generator of the 2-torsion. It remains an open problem to find and characterize DOWs whose corresponding prodsimplicial complexes have torsion in their homology groups.

The construction of prodsymlicial complexes for directed graphs presented in this paper is a first attempt to apply TDA to data sets consisting of graphs, designed specifically for the biological outputs called word graphs. Other graph outputs have been obtained in biology, and development of TDA for such outputs in a more general situations is desirable.
More generally, we propose constructions of custom-built cell complexes designed for the purpose of studying  individual biological problems. Polyhedral cells are to be  specified to build  a  complex, in such a way that are closed under boundary operators, and that are appropriate for a given biological situation.

Word graphs of DOWs correspond to reductions of chord diagrams, and we expect applications to areas related to chord diagrams. The reductions of repeat and return words can be generalized to other rewriting systems.It is desirable to apply similar constructions of complexes and use of their homology for confluent rewriting systems.

\subsection*{Acknowledgments}
This research was (partially) supported by the grants NSF DMS-2054321, CCF-2107267, The Simon’s Fellow grant from the Simons Foundation, the W.M. Keck Foundation. In addition this research was under auspices of the Southeast Center for Mathematics and Biology, an NSF-Simons Research Center for Mathematics of Complex Biological Systems, under National Science Foundation Grant No. DMS-1764406 and Simons Foundation Grant No. 594594.
\printbibliography[heading=bibintoc]
\end{document}